\pgfplotsset{compat=1.10}
\tikzset{
    dot/.style={circle,fill,inner sep=3\pgflinewidth},
    math node/.style={
        execute at begin node=$,
        execute at end node=$,
    },
}
\tikzset{
		big dot/.style={
			circle, inner sep=0pt, 
			minimum size=1mm, fill=black
		}
	}
\tikzset{
		Big dot/.style={
			circle, inner sep=0pt, 
			minimum size=2.2mm, fill=black
		}
	}
\def\coloniff{\DOTSB\;:\Longleftrightarrow\;}
\newcommand*\hash{}
\let\hash=\#
\newcommand*\atsf{}
\let\atsf=\@
\newcommand*\amp{}
\let\amp=\&
\newcommand*\nmsp{} % thin negative math space
\let\nmsp=\!
\newcommand*\dollar{}
\let\dollar=\$
\protected\def\#{\ifmmode\expandafter\mathbb\else\expandafter\hash\fi}
\protected\def\*#1{{\text{\boldmath$\mathbf{\newmcodes@#1}$}}}
\protected\def\@{\ifmmode\expandafter\mathcal\else\expandafter\atsf\fi}
\protected\def\&{\ifmmode\expandafter\mathfrak\else\expandafter\amp\fi}
\protected\def\!#1{\mathsf{\newmcodes@#1}}
\protected\def\${\ifmmode\expandafter\mathnormal\else\expandafter\dollar\fi}
\newcommand*\circumflex{}
\let\circumflex=\^
\newcommand*\tildeaccent{}
\let\tildeaccent=\~
\newcommand*\hyphen{}
\let\hyphen=\-
\newcommand*\underscore{}
\let\underscore=\_
\newcommand*\dotaccent{}
\let\dotaccent=\.
\protected\def\^{\ifmmode\expandafter\widehat@\else\expandafter\circumflex\fi}
\protected\def\widehat@#1{\binrel@{#1}\binrel@@{\widehat{#1}}}
\protected\def\~{\ifmmode\expandafter\widetilde@\else\expandafter\tildeaccent\fi}
\protected\def\widetilde@#1{\binrel@{#1}\binrel@@{\widetilde{#1}}}
\protected\def\-{\ifmmode\expandafter\overline@\else\expandafter\hyphen\fi}
\protected\def\overline@#1{\binrel@{#1}\binrel@@{\overline{#1}}}
\protected\def\_{\ifmmode\expandafter\underline@\else\expandafter\underscore\fi}
\protected\def\underline@#1{\binrel@{#1}\binrel@@{\underline{#1}}}
\protected\def\.{\ifmmode\expandafter\dot@\else\expandafter\dotaccent\fi}
\protected\def\dot@#1{\binrel@{#1}\binrel@@{\dot{#1}}}
\let\emptyset=\varnothing
\let\epsilon=\varepsilon
\let\phi=\varphi
\def\twoheadrightarrow{\mathrel{\mathrlap{\to}\,{\to}}}
\def\longtwoheadrightarrow{\relbar\joinrel\twoheadrightarrow}
\def\AND{\mathrel{\,\amp\,}}
\def\OR{\mathrel{\,\text{or}\,}}
\DeclarePairedDelimiter\paren{\lparen}{\rparen}
\DeclarePairedDelimiterX\set[2]{\{}{\}}{#1\nonscript\;\delimsize\vert\nonscript\;#2}
\DeclarePairedDelimiter\abs{\lvert}{\rvert}
\DeclarePairedDelimiter\norm{\lVert}{\rVert}
\numberwithin{equation}{section}
\crefname{equation}{}{}
\crefname{enumi}{}{}
\crefname{enumii}{}{}
\crefname{enumiii}{}{}
\crefname{figure}{Figure}{Figures}
\let\c@figure=\@undefined
\newaliascnt{figure}{equation}
\setlist{font=\textup}
\NewDocumentCommand \__leftprotrusion_label {om} {
    \IfValueTF {#1} {
        \label[#1]{#2} \__leftprotrusion_orig:
    } {
        \label{#2} \__leftprotrusion_orig:
    }
}
\newtheorem{theorem}[equation]{Theorem}
\newtheorem{lemma}[equation]{Lemma}
\newtheorem{proposition}[equation]{Proposition}
\newtheorem{corollary}[equation]{Corollary}
\newtheorem*{theorem*}{Theorem}
\newtheorem*{fact*}{Fact}
\newtheorem*{lemma*}{Lemma}
\newtheorem*{proposition*}{Proposition}
\newtheorem*{corollary*}{Corollary}
\newtheorem*{conjecture*}{Conjecture}
\theoremstyle{definition}
\newtheorem{observation}[equation]{Observation}
\newtheorem{definition}[equation]{Definition}
\newtheorem{remark}[equation]{Remark}
\newtheorem{example}[equation]{Example}
\newtheorem{question}[equation]{Question}
\crefname{question}{Question}{Questions}
\newtheorem{hypothesis}[equation]{Hypothesis}
\newtheorem*{definition*}{Definition}
\newtheorem*{remark*}{Remark}
\newtheorem*{example*}{Example}
\newtheorem*{problem*}{Problem}
\newtheorem*{question*}{Question}
\newtheorem*{hypothesis*}{Hypothesis}
\newtheorem*{convention*}{Convention}
\let\defn=\textbf
\newcommand{\w}{\mathbf{w}}
\newcommand{\bfP}{\mathbf{P}}
\newcommand{\ww}[1]{{ \pgothfamily w\normalfont}}
\newcommand*\SL{\mathrm{SL}}
\DeclareMathOperator{\RandomMF}{FMaxSF}
\DeclareMathOperator{\FMSF}{FMSF}
\DeclareMathOperator{\WMSF}{WMSF}
\DeclareMathOperator{\Aut}{Aut}
\def\acts{\curvearrowright}
\newcommand*\cl{\mathrm{cl}}
\newcommand*\bs{{\setminus}}
\newcommand{\ledge}{<}
\newcommand{\lweight}[1]{\ledge_{#1}}
\newcommand{\lw}{\lweight{\w}}
\newcommand{\gedge}{>}
\newcommand{\gweight}[1]{\gedge_{#1}}
\newcommand{\gw}{\gweight{\w}}
\newlist{eqenum}{enumerate}{1}
\let\c@eqenumi=\@undefined
\newaliascnt{eqenumi}{equation}
\setlist[eqenum]{label=(\arabic*),align=left,labelindent=0pt,labelwidth=!,resume}
\pretocmd\eqenum{\edef\enit@resume@eqenum{\noexpand\c@eqenumi\the\c@eqenumi}}{}{}
\crefname{eqenumi}{}{}
\newcommand{\tocdotfill}{%
    \leaders\hbox to .44em{\hss.\hss}\hfill
}
\def\l@section{\@tocline{1}{5pt}{0pc}{}{}}
\renewcommand{\tocsection}[3]{%
	\indentlabel{\@ifnotempty{#2}{\makebox[20pt][l]{%
				\ignorespaces#1 #2.\hfill}}}{\sc #3}\tocdotfill}
\newdimen{\tocsubsecmarg}
\def\l@subsection{\@tocline{2}{3pt}{0pc}{\tocsubsecmarg}{}}
\renewcommand{\tocsubsection}[3]{%
	\indentlabel{\@ifnotempty{#2}{\makebox[30pt][l]{%
				\ignorespaces#1 #2.\hfill}}}{#3}\tocdotfill}
\let\oldtocsubsection=\tocsubsection
\renewcommand{\tocsubsection}[2]{\hspace{3em} \oldtocsubsection{#1}{#2}}
\begin{document}

\title{Nonamenable subforests of multi-ended quasi-pmp graphs}

\begin{abstract}
We prove the a.e.\ nonamenability of locally finite quasi-pmp Borel graphs whose every component admits at least three nonvanishing ends with respect to the underlying Radon--Nikodym cocycle.
We witness their nonamenability by constructing Borel subforests with at least three nonvanishing ends per component, and then applying Tserunyan and Tucker-Drob's recent characterization of amenability for acyclic quasi-pmp Borel graphs.
Our main technique is a weighted cycle-cutting algorithm, which yields a weight-maximal spanning forest.
We also introduce a random version of this forest, which generalizes the Free Minimal Spanning Forest, to capture nonunimodularity in the context of percolation theory.
\end{abstract}

\author{Ruiyuan Chen}
\address{
Ruiyuan Chen \\
Mathematics Department \\
University of Michigan \\
Ann Arbor, MI \\
USA
}
\email{ruiyuan@umich.edu}
\urladdr{
\url{https://rynchn.github.io/math}
}

\author{Grigory Terlov}
\address{
Grigory Terlov \\
Department of Statistics and Operations Research\\
University of North Carolina\\
Chapel Hill, NC \\
USA
}
\email{gterlov@unc.edu}
\urladdr{
\url{https://sites.google.com/view/gterlov/home}
}

\author{Anush Tserunyan}
\address{
Anush Tserunyan \\
Mathematics and Statistics Department \\
McGill University \\
Montr\'{e}al, QC \\
Canada
}
\email{anush.tserunyan@mcgill.ca}
\urladdr{
\url{https://www.math.mcgill.ca/atserunyan}
}

\keywords{Borel graphs, amenable, countable Borel equivalence relations, quasi-pmp, nonsingular group actions, Radon--Nikodym cocycle, spanning forest, random forest, percolation}

\subjclass[2020]{Primary 37A20, 03E15, 60K35; Secondary 37A40, 05C22, 60B99}

%\thanks{The first author was partially supported by the NSF grant DMS-2224709, the second and third authors were partially supported by the NSF grant DMS-1855648, and the third author was also partially supported by the NSERC Discovery Grant RGPIN-2020-07120.}

\date{\today}

\maketitle
\tableofcontents

%%%%%%%%%%%%%%%%%%%%%%%%
% Introduction
%%%%%%%%%%%%%%%%%%%%%%%%

\section{Introduction}
\label{sec:intro}

Locally countable Borel graphs\footnote{\label{ft:Borel_graph}A \defn{Borel graph} on a standard Borel space $X$ is a graph with vertex set $X$ and edge set a symmetric Borel subset of $X^2$.} on standard probability spaces have been a center of attention in a variety of areas such as Descriptive Graph Combinatorics, Measured Group Theory, Ergodic Theory, and the study of Countable Borel Equivalence Relations\footnote{\label{ft:cber-fber}An equivalence relation $E$ on a standard Borel space $X$ is \defn{countable} (resp.\ \defn{finite}) \defn{Borel} if it is Borel as subsets of $X^2$ and each equivalence class is countable (resp.\ finite).} (\defn{CBERs}). 
The interest in them is largely facilitated by the fact that these graphs arise as Schreier graphs of measurable actions of countable groups, enabling graph-theoretic and algorithmic approaches, as well as techniques from probabilistic combinatorics and geometric group theory, for studying such actions and their orbit equivalence relations. 

The majority of the developed theory concerns probability measure preserving (\defn{pmp}) actions, where in particular, notions like amenability are well-understood. 
On the other hand, much less is known about \defn{quasi-pmp} (nonsingular) actions, where points in the same orbit have different ``relative weights'', whence even free such actions may not reflect the properties of the acting group. 
For instance, the typical nonamenable group $\#F_2$ has amenable free quasi-pmp actions (see \cref{F2actsamenable}). 
In this article -- in the spirit of the Day--von Neumann question -- we construct nonamenable subforests of certain locally finite quasi-pmp Borel graphs, in particular witnessing their nonamenability. 
We do so using the interplay between the geometry of these graphs (the space of ends) and the behavior of the underlying Radon--Nikodym cocycle, which we interpret as a relative weight function on the graph that accounts for the failure of invariance of the underlying measure.

Just like amenability is an important conceptual threshold for groups, \defn{$\mu$-amenability} is an equally fundamental concept for CBERs/group actions/graphs on a standard probability space $(X,\mu)$. The concept of $\mu$-amenability was originally introduced in \cite{Zimmer:amenable_erg_actions}, and a very useful equivalent definition was given in \cite{Connes-Feldman-Weiss}; see also \cite[2.4]{JKLcber}. By the Connes--Feldman--Weiss theorem \cite{Connes-Feldman-Weiss}, $\mu$-amenability of CBERs is equivalent to \defn{$\mu$-hyperfiniteness}, i.e., being a countable increasing union of finite Borel equivalence relations\cref{ft:cber-fber} off of a $\mu$-null set. In the present paper, we are only concerned with the measured context, so we use the terms \textbf{amenable} and \textbf{hyperfinite} interchangeably, dropping $\mu$ from notation when it is clear from the context. We also use these terms for Borel actions of countable groups and for locally countable Borel graphs on $(X,\mu)$, calling them amenable or hyperfinite, when the induced orbit equivalence relation or connectedness relation, respectively, are so.

%\footnote{A CBER $E$ on $(X,\mu)$ is \textbf{quasi-pmp} if for each null set $A \subseteq X$ the saturation $[A]_E$ is still null.}

%We use $\mu$-hyperfiniteness below, as opposed to $\mu$-amenability, to avoid using the more difficult direction of the Connes--Feldman--Weiss theorem, as we are mainly concerned with the failure of $\mu$-hyperfiniteness.

\subsection{Overview of nonamenability in the pmp setting}

A fundamental question in this line of research is to determine whether a given Borel action of a countable group on $(X,\mu)$ is amenable. 
By \cite[2.5(i)]{JKLcber}, every such action is amenable when the group is amenable. 
For free probability-measure-preserving (\defn{pmp}) actions, we also have the converse: the action is nonamenable when the group is nonamenable \cite[2.5(ii)]{JKLcber}. 
More generally, free pmp actions of countable groups tend to reflect the properties of the group. 
A big part of this is due to the theory of cost for pmp CBERs\footnote{\label{ft:pmp-quasi-pmp_CBER}A CBER $E$ on a standard probability space $(X,\mu)$ is \defn{pmp} (resp. \defn{quasi-pmp}) if for any Borel automorphism $\gamma$ on $X$, that maps every point to an $E$-equivalent point, preserves $\mu$ (resp. $\mu$-null sets).} -- an analogue for CBERs of the free rank for groups -- which is only available in the pmp setting.
The \defn{cost} of a pmp CBER $E$ is defined as the infimum of the cost (i.e.\ half of expected degree) of its \defn{graphings}, i.e.\ Borel graphs $G$ on $X$ whose connectedness relation $\#E_G$ is equal to $E$ a.e.

Fundamental results of Gaboriau \cite{Gaboriau:mercuriale} and Hjorth \cite{Hjorth:cost_lemma} establish a strong analogy between free groups and and \defn{treeable} pmp CBERs (those which admit acyclic graphings). 
In particular, an ergodic\footnote{An equivalence relation is \defn{ergodic} if every invariant measurable set is either null or conull.} treeable CBER is nonamenable exactly when its cost is $>1$. 
%In fact, amenability of an ergodic pmp CBER is equivalent to having cost $1$ and attaining it by a (necessarily acyclic) graphing \cite{Levitt:cost}.
By \cite{Gaboriau:mercuriale}, any acyclic graphing achieves the cost of a pmp CBER, so in the pmp context, acyclic Borel graphs of cost $> 1$ play the same role as free groups of rank $> 1$ among groups. 
In the spirit of the Day--von Neumann question as to whether every nonamenable group contains a copy of $\#F_2$ (the free group on $2$ generators), one tries to detect the nonamenability of a given pmp graph $G$ by exhibiting a nonamenable acyclic Borel subforest of $G$, or at least of its connectedness relation $\#E_G$. 
%For example, all ergodic pmp CBERs of cost $>1$ are nonamenable because a greedy cycle-cutting (Borel) algorithm applied to any graphing of such a CBER yields a Borel subforest of cost $>1$. 
The following is a corollary of a theorem of Ghys \cite{Ghys:Stallings}, due to Gaboriau \cite[IV.24]{Gaboriau:cout}:

\begin{theorem}[Gaboriau--Ghys; 2000]\label{Gaboriau-Ghys}
The connectedness relation $\#E_G$ of an ergodic pmp graph $G$ whose a.e.~component has $\ge 3$ ends is of cost $>1$, hence $\#E_G$ is nonamenable. In fact, there is an ergodic subforest $F \subseteq \#E_G$ of cost $>1$ witnessing the nonamenability of $\#E_G$.
\end{theorem}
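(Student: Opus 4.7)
I plan to witness the nonamenability of $\#E_G$ by extracting a Borel spanning subforest $F \subseteq G$ whose components still have $\ge 3$ ends a.e., then showing that such a pmp forest has cost $>1$, and concluding via Gaboriau--Hjorth.

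First, I would fix a Borel linear order on the edge set of $G$ and perform a Borel Kruskal-style cycle-cutting algorithm: traverse edges in order and keep each one iff adding it does not complete a cycle with previously kept edges. The result is a Borel spanning subforest $F \subseteq G$. Next, I would check that for any locally finite graph $H$ and any spanning subforest $F' \subseteq H$, the end-map $\mathrm{Ends}(F') \to \mathrm{Ends}(H)$ is surjective on each component: passing to a subforest can only refine end-equivalence, since removing a finite vertex cut leaves strictly fewer connecting paths. Thus every component of $F$ still has $\ge 3$ ends a.e.

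The substantive step is to show that $F$ has cost $>1$. For an acyclic pmp Borel graph, cost equals half the expected degree. I would argue via the mass-transport principle that a pmp forest with $\ge 3$ ends per component has expected degree strictly greater than $2$: the set of \emph{branching} vertices (those whose removal yields $\ge 3$ infinite components) has positive measure by ergodicity, and a careful nearest-branching-vertex transport contributes the excess mass needed to push the average degree past $2$. (Alternatively, as in Gaboriau's original argument following Ghys, one may appeal to a Stallings-type decomposition of $\#E_G$ coming from the action of its full group on the end-space.) Having cost $>1$ then implies nonamenability by Hjorth's theorem. Ergodicity of the witnessing subforest can be arranged by restricting to an ergodic component of $\#E_F$ on which cost remains $>1$; such a component exists by integral disintegration of cost along the ergodic decomposition.

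\textbf{Main obstacle.} The crux is converting the topological property ``$\ge 3$ ends'' into the statistical property ``expected degree $>2$''; this conversion essentially uses the pmp/unimodularity hypothesis through mass transport. This is precisely the step that breaks in the quasi-pmp setting addressed by the present paper, motivating its introduction of the Radon--Nikodym cocycle as an edge weighting to recover an analogous result.
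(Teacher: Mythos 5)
The paper does not prove \cref{Gaboriau-Ghys} at all: it cites it as a known corollary of Ghys's analogue of Stallings' theorem on ends, via Gaboriau's cost formula for amalgamated products (see the surrounding text citing \cite{Ghys:Stallings} and \cite[IV.24]{Gaboriau:cout}). Your proposal instead runs a cycle-cutting argument — which is in the spirit of what this paper does in \cref{sec:MF} — so it should be judged against that machinery, and it has a genuine gap.

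The gap is the sentence asserting that for a spanning subforest $F' \subseteq H$, the induced end map $\partial^{F'} X \to \partial^H X$ is surjective on each component, and hence that every $F$-component retains $\ge 3$ ends. Both halves are false. First, surjectivity fails outright: in the windmill graph of \cref{ex:windmill}, the original end space is a Cantor set, while the cycle-cutting forest splits the graph into countably many $3$-ended trees, so the end map has only countable image and cannot be onto. Second, even the weaker statement ``every $M$-component has $\ge 3$ ends'' fails for cycle-cutting forests: the modified windmill with some blades removed (also discussed in \cref{ex:windmill}) produces infinitely many $2$-ended components. This is exactly why \cref{thm:mf-coll}\cref{thm:mf-coll:3ends} is stated only as ``at least one component of $M$'', and it is the obstruction the whole of \cref{sec:MF} is engineered to overcome. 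The fix requires two ingredients absent from your plan: the collapse step (\cref{def:mf-coll}) which manufactures actual trifurcation \emph{vertices} from trifurcation \emph{sets} so that \cref{thm:mf-hyp,thm:mf-3ends} can be invoked, and the measure-theoretic lemma \cref{thm:qpmp-ends-isol} (a compressibility argument, nontrivial even in the pmp case $\w \equiv 1$) which is what finally upgrades ``at least one component'' to ``a.e.\ component'' in \cref{thm:mf-qpmp}. Your final step — that a pmp acyclic graph with $\ge 3$ ends per component a.e.\ has cost $>1$, by mass transport on branching vertices — is fine and standard; it is the preceding step that needs the real work.

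As a separate remark: your ``alternative'' parenthetical (Ghys's Stallings-type decomposition of $\#E_G$ acting on its end space) is in fact the actual route taken in the literature cited here, and it bypasses the spanning-subforest question entirely, producing the cost-$>1$ subrelation directly from the amalgam/HNN decomposition. The subforest-of-$G$ formulation is precisely what this paper's machinery adds in the locally finite case, and that addition is not free.
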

%
%We remark that by a standard trick, using a theorem of Hjorth \cite{Hjorth:cost_lemma}, which is again only available in the pmp setting, one can upgrade an ergodic pmp Borel subforest of cost $>1$ to a Borel subequivalence relation induced by a free pmp action of $\#F_2$.

%Of course, there is still a plethora of examples of nonamenable ergodic pmp CBERs with cost $1$; indeed, all product groups \cite[V.1 and VI.23]{Gaboriau:cout} and all infinite Kazhdan groups \cite{Hutchcroft-Pete:Kazhdan_cost_1} admit free pmp actions of cost $1$. 
%For some particular pmp actions though, namely, Bernoulli shifts, the nonamenability can be witnessed by an ergodic Borel subforest of cost $>1$. 
This result is a key ingredient in one of the proofs of the celebrated Gaboriau--Lyons theorem \cite{Gaboriau-Lyons}, which gives a positive answer to the Day--von Neumann question for Bernoulli shifts: if $\Gamma$ is a nonamenable countable group, then the orbit equivalence relation of its shift action on $([0,1]^\Gamma, \lambda^\Gamma)$, where $\lambda$ is the Lebesgue measure, admits an ergodic subequivalence relation induced by a free action of $\#F_2$. Indeed, in this proof, percolation theory yields a subgraph of the orbit equivalence relation to which \cref{Gaboriau-Ghys} applies, yielding an ergodic subforest of cost $> 1$, which is then upgraded to a free action of $\#F_2$ by \cite{Hjorth:cost_lemma}.

\subsection{Nonamenability in the quasi-pmp setting and our main result}

As for general CBERs on standard probability spaces, much less is known. By an argument of Kechris and Woodin, see \cite[Proposition 2.1]{Miller:thesis} or \cite[2.2]{Ts-Zomback:backward_ergodic}, these equivalence relations are quasi-pmp\cref{ft:pmp-quasi-pmp_CBER} after discarding a null set. However, most of the aforementioned theory of pmp actions and CBERs fails in the quasi-pmp setting, starting with the statement that nonamenability of the group implies nonamenability of the orbit equivalence relations of its free actions. 

\begin{example}\label{F2actsamenable}
Consider the action of $\#F_2$ on its boundary $\partial \#F_2$, which we identify with the set of infinite reduced words in the symmetric set $\{a^{\pm 1}, b^{\pm 1}\}$ of generators of $\#F_2$. This action is free except at countably many points (which we discard), and it is (Borel) hyperfinite, by \cite[8.2]{Dougherty-Jackson-Kechris}, because its orbit equivalence relation is equal to that induced by the one-sided shift map on $\partial \#F_2$. This implies that there is no invariant probability measure on $\partial \#F_2$, but there are certainly many quasi-invariant probability measures, e.g.\ the one with value $\tfrac{1}{4} \cdot (\tfrac{1}{3})^{n-1}$ on each cylindrical set based on a word of length $n$.
\end{example}

That properties of the group are not reflected by its free quasi-pmp actions is due to the fact that in the latter setting, points in the same orbit have different ``relative weights.'' This is made precise by the \defn{Radon--Nikodym cocycle} $(x,y) \mapsto \w^y(x) : E \to \#R^+$ of the orbit equivalence relation $E$ with respect to the underlying probability measure $\mu$, as defined in \cite[Section 8]{KMtopics}. In the present paper, we think of $\w^y(x)$ as the weight of $x$ relative to $y$, so we call cocycles to $\#R^+$ \textbf{relative weight functions}, hence the notation $\w$. The Radon--Nikodym cocycle ``corrects'' the failure of invariance of the measure $\mu$, enabling the (\defn{measurable}) \defn{mass transport principle}: for each $f : E \to [0, \infty]$,
\begin{equation}\label{eq:MMTP}
\int \sum_{y \in [x]_E} f(x,y) d\mu(x) = \int \sum_{x \in [y]_E} f(x,y) \w^y(x) d\mu(y). 
\end{equation}

\textit{In the absence of the theory of cost in the quasi-pmp setting, we look at the geometry of quasi-pmp graphs and the behaviour of the Radon--Nikodym cocycle along them.}
One has to first understand amenability for the simplest class of quasi-pmp graphs, namely, acyclic ones. This is done by Tserunyan and Tucker-Drob in \textcolor{black}{a forthcoming paper} \cite{Ts-TuckerDrob:hyperfinite_in_tree}. 
To present their characterization result in analogy with the pmp setting, we first note that for acyclic ergodic pmp graphs, we can replace cost by geometry, namely, the cost of such a graph is $\le 1$ if and only if it has $\le 2$ ends a.e. 
Thus, an acyclic ergodic pmp graph is amenable exactly when it has $\le 2$ ends a.e.\ (proven directly by Adams in \cite{Adams:trees_amenability}). 
In \cite{Ts-TuckerDrob:hyperfinite_in_tree}, this is generalized to the quasi-pmp setting as follows: 

\begin{theorem}[Tserunyan--Tucker-Drob; 2025]\label{Anush-Robin}
An acyclic quasi-pmp Borel graph $G$ is amenable exactly when a.e.\ $G$-component has $\le 2$ $\w$-nonvanishing ends, where $\w$ is the Radon--Nikodym cocycle of $\#E_G$ with respect to the underlying measure.
\end{theorem}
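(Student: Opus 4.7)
My plan is to prove the two directions of the biconditional separately, after reducing to the ergodic case by ergodic decomposition.

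For the ``if'' direction (at most $2$ $\w$-nonvanishing ends a.e.\ implies amenable), I would adapt Adams's classical hyperfiniteness argument for acyclic pmp graphs with at most $2$ ends, splitting into cases based on the number $k \in \{0,1,2\}$ of nonvanishing ends per component. When $k = 2$, the two ends span a unique bi-infinite geodesic line $L(x)$ that varies Borel-measurably with $x$, and finite neighborhoods of $L(x)$ give a hyperfinite exhaustion of $\#E_G$. When $k = 1$, the unique nonvanishing end $\epsilon(x)$ is a Borel measurable selection, and orienting edges toward it reduces to tail equivalence on the resulting rooted forest, which is hyperfinite by a weighted Dougherty--Jackson--Kechris argument. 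The hardest subcase is $k = 0$: with no heavy direction, one should show that each component admits a Borel ``weighted centroid'' capturing all but vanishing weight, from which hyperfiniteness follows by growing finite neighborhoods around it.

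For the ``only if'' direction, I would argue by contrapositive. Suppose a positive-measure Borel set $A$ consists of vertices in components with at least $3$ nonvanishing ends. Borel-select three distinct nonvanishing ends per component of $A$, and let $\tau(x)$ denote the unique Steiner (trifurcation) vertex joining them; the map $x \mapsto \tau(x)$ is Borel, and each $\tau$-point has three $\w$-heavy rays emanating from it. If $\#E_G$ were amenable, the Connes--Feldman--Weiss theorem would yield a hyperfinite filtration by finite subequivalence relations; the three nonvanishing rays at each trifurcation point would then force the weighted boundary of each finite class to remain a fixed positive fraction of its weighted volume, via the tilted mass transport principle applied to an appropriately weighted boundary functional, contradicting the F{\o}lner-type formulation of amenability in the quasi-pmp setting.

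The main obstacle is the contradiction step in the ``only if'' direction: formulating a precise enough quasi-pmp F{\o}lner characterization of amenability to exploit the three-nonvanishing-end geometry, and then executing the weighted boundary estimate via the tilted mass transport principle. A secondary but nontrivial issue is measurability -- selecting three nonvanishing ends per component requires building the end-space into a Borel fiber bundle over $X$ and verifying that the cocycle's limiting behavior along rays is Borel.
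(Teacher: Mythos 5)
Note first that the paper does not prove this statement: it is imported as a black box from \cite{Ts-TuckerDrob:hyperfinite_in_tree} (and the non-acyclic nonamenability consequence used in the paper, \cref{3-nonvanishing=>nonamenability}, is \emph{deduced from} it via choosing a treeing). So there is no proof in the paper to compare against, and I will assess your proposal on its own.

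The serious gap is in the ``only if'' direction. The step ``Borel-select three distinct nonvanishing ends per component of $A$, and let $\tau(x)$ denote the unique Steiner vertex joining them'' is not available. If such an $\#E_G$-invariant Borel selection existed, then $\tau$ would be a Borel map into $X$ that is constant on components, and its image would be a Borel transversal (a countable-to-one Borel image, hence Borel), so $\#E_G|_A$ would be smooth, hence hyperfinite, hence amenable --- i.e., the hypothesis of your contradiction argument would already be consistent with the selection, and the selection itself would sink the ship. More to the point, in the generic situation (say, a pmp acyclic graph with $3$-regular components, where $\w \equiv 1$ and the nonvanishing ends form a Cantor set per component) no such invariant Borel choice of three ends exists: that is precisely a non-smoothness phenomenon. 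The correct technical substitute, used systematically in this paper (cf.\ \cref{def:weights-furc}, \cref{thm:weights-furc}, and the Borel construction in \cref{thm:mf-borel}), is to work with \emph{finite} $\w$-trifurcation \emph{sets}, which live in $X^{<\#N}$ and can be handled by Lusin--Novikov: one selects a maximal disjoint Borel \emph{family} of trifurcations, never a single one per component and never an end. Even granting the selection, your tilted-mass-transport step is only a heuristic as stated: the three nonvanishing rays leave $\tau(x)$, but the finite classes in a hyperfinite exhaustion need not contain $\tau(x)$ at all, and classes confined to a subtree hanging off one ray can have arbitrarily small $\w$-boundary relative to $\w$-weight. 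You flag this as ``the main obstacle,'' and indeed it is exactly where the substance of the argument would have to go; as written it is not a proof.

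The ``if'' direction is closer to correct in the locally finite setting, but note the theorem as stated makes no local finiteness assumption, whereas your argument relies on it throughout (finite neighborhoods of $L(x)$, unique ``next step'' toward an end, finitely many $\w$-maximal vertices for $k=0$). For locally finite graphs: $k=0$ is smooth as the paper observes in \cref{exactly_2_ends}; $k=1$ is correct via orienting toward the unique nonvanishing end and invoking the Dougherty--Jackson--Kechris hyperfiniteness of tail equivalence of a Borel function; but $k=2$ needs more than ``finite neighborhoods of $L(x)$,'' since the subtrees hanging off $L$ can be infinite (with many vanishing ends) and ``within distance $n$ on $L$'' is not transitive, so one needs either to chop $L$ along a maximal disjoint Borel family of $\w$-bifurcations or to invoke something like \cite[5.1]{Mends} as the paper does. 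Handling the locally infinite case is a substantial part of the difficulty of the Tserunyan--Tucker-Drob result and is not addressed by the proposal.
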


\noindent Here, an end $\eta$ of $G$ is said to be \textbf{$\w$-vanishing} if the cocycle $\w$ converges to $0$ along any sequence $(x_n)$ of vertices converging to $\eta$, i.e., $\lim_n \w^{x_0}(x_n) = 0$. Notice that in \cref{F2actsamenable}, each connected component of the canonical Schreier graph has exactly one $\w$-nonvanishing end, namely, the forward direction of the one-sided shift map on $\partial \#F_2$.

\cref{Anush-Robin} indeed generalizes the pmp situation because $\w \equiv 1$ in that case, so every end is nonvanishing.
It is not hard to derive from \cref{Anush-Robin} that every \emph{locally finite} quasi-pmp graph $G$ that has $\ge 3$ $\w$-nonvanishing ends in a.e.\ $G$-component is nowhere nonamenable, see \cref{3-nonvanishing=>nonamenability}.
The main result of the present paper is an explicit construction of a subforest witnessing the nonamenability of $G$, thereby completing the generalization of \cref{Gaboriau-Ghys} to the quasi-pmp setting for locally finite graphs:

\begin{theorem}[see \cref{thm:mf-qpmp}]\label{intro:witnessing_nonamenability}
Let $G$ be a locally finite quasi-pmp Borel graph and let $\w$ denote its Radon–Nikodym cocycle with respect to the underlying probability measure.
If a.e.\ $G$-component has $\ge 3$ $\w$-nonvanishing ends, then there is a Borel subforest $F \subseteq G$ that is ergodic relative%
\footnote{\label{ftnote:relative_ergodicity} For a Borel graph $G$ on a measure space, a Borel subgraph $F \subseteq G$ is \defn{ergodic relative to $G$} if every $\#E_F$-invariant Borel set is $\#E_G$-invariant mod null.}
to $G$ such that for a.e.\ $F$-component, the space of $\w$-nonvanishing ends of that component is nonempty and perfect.\footnote{\label{ftnote:perfect_space} A topological space is \defn{perfect} if it has no isolated points, e.g., $\#Q$.}
In particular, by \cref{Anush-Robin}, $F$, and hence $G$, is nowhere amenable\footnote{This is the only place where we use \cite{Ts-TuckerDrob:hyperfinite_in_tree}.}.
\end{theorem}

\begin{remark}\label{exactly_2_ends}
For a locally finite quasi-pmp Borel graph $G$, if each $G$-component has \emph{exactly} two $\w$-nonvanishing ends, then $G$ is hyperfinite by \cite[5.1]{Mends}, or by a simple geometric argument using a maximal disjoint set of $\w$-bifurcations. 
As in the pmp case, we are unable to conclude anything if each component of $G$ has \emph{exactly} one $\w$-nonvanishing end.
Finally, if each $G$-component has zero $\w$-nonvanishing ends, then $G$ is smooth because local finiteness implies that there are only finitely many $\w$-maximal elements in each $G$-component.

\end{remark}

\begin{remark}
In the case of locally finite Borel graphs \cref{intro:witnessing_nonamenability} is a strengthening of \cref{Gaboriau-Ghys} even for pmp graphs because our ergodic forest $F$ is a subgraph of $G$ and not just of $\#E_G$. However, this is only due to the fact that we now know, by a theorem of Tserunyan \cite{Ts:hyperfinite_ergodic_subgraph} (which generalizes the analogous theorem of Tucker-Drob for pmp graphs), that every quasi-pmp ergodic graph admits an ergodic hyperfinite subgraph.
\end{remark}

\begin{remark}\label{cycle-cutting_algorithm}
A significant part of the proof of \cref{Gaboriau-Ghys}, namely that by Ghys, involves an analogue for pmp graphs of the Stalling's analysis of ends of groups \cite{Stallings:ends}. In contrast, our subforest $F$ in \cref{intro:witnessing_nonamenability} is constructed via a much simpler cycle-cutting algorithm, which runs simultaneously on all $G$-components and cuts the $\w$-lightest edge in each simple cycle, using a fixed Borel linear ordering on edges as a tiebreaker.
\end{remark}

In \cref{sec:appl}, we give concrete applications of \cref{intro:witnessing_nonamenability} to \textbf{coinduced group actions} (\cref{coinduced-action_nonamenable}) and \textbf{cluster graphings} of nonunimodular graphs (\cref{cluster-graphing_nonamenable,ex:GP}).

\subsection{Application to percolation theory: Free $\w$-Maximal Spanning Forest}

Besides \cref{intro:witnessing_nonamenability}, our cycle-cutting algorithm described in \cref{cycle-cutting_algorithm} has other applications, in particular to \textbf{random forests} in probability theory. 
Indeed, this algorithm works abstractly on any countable graph $G$ equipped with a relative weight function $\w$ and a linear ordering $<$ (tiebreaker) on the edges of $G$, yielding what we call the \textbf{$\w$-maximal subforest} of $G$ (with respect to the tiebreaker $<$). 
This is a generalization to the (relatively) weighted setting of the \textbf{minimal subforest} algorithm used in probability, which simply deletes the $<$-least edge in each cycle, regardless of its $\w$-weight. 
In particular, just like the minimal subforest splits each cluster (i.e.\ connected component) of $G$ into infinite trees, the $\w$-maximal subforest does the same with \defn{$\w$-infinite} clusters (also known as \textbf{heavy} clusters), i.e.\ those whose total $\w$-weight is infinite:

\begin{proposition}\label{intro:heavy_splits_into_heavy}
Let $G$ be a graph with a relative weight function $\w$ and a linear ordering $<$ on the edges. Every $\w$-infinite component of $G$ splits into $\w$-infinite trees in the $\w$-maximal subforest of $G$ (with respect to the tiebreaker $<$).
\end{proposition}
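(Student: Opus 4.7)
The plan is to argue by contradiction. Suppose some tree component $T$ of the $\w$-maximal subforest $F$, sitting inside a $\w$-infinite $G$-component $C$, satisfies $\sum_{v \in T} \w(v) < \infty$. Since $\w(C) = \infty$ we have $T \subsetneq C$, so the set $\partial T$ of $G$-edges with exactly one endpoint in $T$ is nonempty; all such edges lie in $G \setminus F$ because $T$ is a connected component of $F$.

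The central combinatorial observation is that $\partial T$ admits no $\lw$-maximum element. Given $e = \{x, y\} \in \partial T$ with $x \in T$ and $y \in C \setminus T$, I claim there exists $e' \in \partial T$ with $e \lw e'$. Indeed, since $e \notin F$, the cycle-cutting algorithm removed $e$ as the unique $\lw$-minimum edge of some simple cycle $\gamma_e$ in $G$. The path $\gamma_e \setminus \{e\}$ runs from $y$ back to $x$ and must re-enter $T$ at least once; the first edge along this path with an endpoint in $T$ is a boundary edge, distinct from $e$ and lying on the same cycle, hence strictly $\lw$-heavier than $e$.

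To obtain the contradiction, I would use the $\w$-finiteness of $T$ to exhibit a $\lw$-maximum in $\partial T$. From $\sum_{v \in T} \w(v) < \infty$, for every $\varepsilon > 0$ only finitely many $v \in T$ satisfy $\w(v) \geq \varepsilon$. In the locally finite setting that is the paper's main focus, each such heavy vertex contributes only finitely many incident edges to $\partial T$, while boundary edges attached to light $T$-vertices are themselves $\lw$-light via the cocycle-induced edge comparison. Together this forces $\partial T$ to contain only finitely many edges above any fixed $\lw$-threshold, so $\partial T$ has a $\lw$-maximum -- contradicting the key observation.

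The main obstacle I anticipate is the final finiteness step: precisely matching the edge ordering $\lw$ with the vertex weight function $\w$ so that $\lw$-heavy boundary edges are forced to attach to $\w$-heavy $T$-vertices. This is where the specific definition of how the relative weight function $\w$ induces a comparison of edges must be used, and where the care is concentrated; once pinned down, the contradiction via the "no $\lw$-maximum" lemma closes the proof cleanly.
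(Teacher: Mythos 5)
Your argument is correct and rests on exactly the same mechanism as the paper's proof via \cref{thm:mf-hyp}: every boundary edge of a tree component was deleted as the $\lw$-least edge of some simple cycle, and that cycle must contain another, strictly $\lw$-heavier, boundary edge, so the edge boundary of the component admits no $\lw$-maximum; combined with local finiteness and the assumed $\w$-finiteness of the component (which would force a $\lw$-maximum on the boundary), this is a contradiction. The paper packages the same observation directly rather than by contradiction --- it builds a $\lw$-increasing sequence of boundary edges, passes to a pairwise-disjoint subsequence, and concludes the stronger fact that each component is $\w$-\emph{nonvanishing} (not merely $\w$-infinite) --- but the core insight and the role of local finiteness are identical to yours.
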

A crucial strengthening of \cref{intro:heavy_splits_into_heavy} is proven in \cref{thm:mf-hyp} and \cref{thm:increasing_seq_in_the_cut}.

Taking a uniformly random linear ordering (tiebreaker) on the edges of $G$, the minimal subforest algorithm yields the \defn{Free Minimal Spanning Forest} (\defn{FMSF}) of $G$ -- a well-known random subforest that has been useful in percolation theory of unimodular graphs.
Analogously, taking a uniformly random linear ordering on the edges of $G$, our $\w$-maximal subforest algorithm yields a random subforest of $G$, which we call the \textbf{Free $\w$-Maximal Spanning Forest} and denote by $\RandomMF_\w(G)$.

FMSF is often applied to random subgraphs $\omega$ of a connected locally finite graph $G$, in particular, to a configuration $\omega$ sampled from an invariant (under automorphisms of $G$) bond percolation $\mathbf{P}$ on $G$, yielding an invariant random subforest $\FMSF(\omega)$ of $G$. 
If the relative weight function $\w$ on $V(G)$ is invariant under a closed subgroup $\Gamma$ of the automorphism group $\Aut(G)$, then same is true for $\RandomMF_\w$: applied to a sample $\omega$ from a $\Gamma$-invariant percolation $\mathbf{P}$ on $G$, it yields a $\Gamma$-invariant random subforest $\RandomMF_\w(\omega)$ of $G$. 
In fact, such an invariant weight function $\w:=\w_\Gamma$ is induced by $\Gamma$ itself by setting $\w_\Gamma^y(x)$ to be the ratio of the $\Gamma$-Haar measures of the $\Gamma$-stabilizers of vertices $x$ and $y$ of $G$. (See \cref{sec:nonunimod} for details.)

A graph $G$ is called \defn{unimodular} if its automorphism group $\Aut(G)$ is unimodular; equivalently, the relative weight function $\w_\Gamma$ induced by $\Gamma := \Aut(G)$ is constant $1$. 
While nonunimodularity makes some questions easier to answer \cite{Hutchcroft20}, many of the techniques developed for unimodular graphs do not extend to the nonunimodular setting, leaving large gaps in the understanding of percolation processes on such graphs. 
This includes the techniques that make use of $\FMSF(\omega)$ because their correct analogues in this setting would need to involve the induced relative weight function $\w_\Gamma$, which $\FMSF(\omega)$ does not account for, while $\RandomMF_\w(\omega)$ does. 
Even when $G$ is unimodular, it is possible to have a closed nonunimodular subgroup $\Gamma \le \Aut(G)$. For instance, such a subgroup is $\Gamma_\xi \le \Aut(T_d)$ of automorphisms that fix a specified end $\xi$ of the $d$-regular tree $T_d$ with $d \ge 3$. For more examples see \cite{Haggstrom99,Timar06nonu,Hutchcroft20}.

We say that a vertex is a $\w$-\textbf{trifurcation} for $G$ (respectively, cluster $C$) if its deletion splits $G$ (respectively, $C$) into at least three $\w$-nonvanishing connected components. 

\begin{theorem}\label{intro:MF_in_deletion_percolation}
Let $G$ be a locally finite connected graph, $\Gamma$ be a closed subgroup of $\Aut(G)$ whose action on $G$ is transitive, and $\w_\Gamma$ be the $\Gamma$-invariant relative weight function on $V(G)$ induced by $\Gamma$ as above. 
Let $\mathbf{P}$ be a $\Gamma$-invariant percolation on $G$.

Then for $\mathbf{P}$-a.e.\ configuration $\omega$, for every $\w_\Gamma$-heavy cluster $C \subseteq\omega$ which contains a $\w_\Gamma$-trifurcation vertex, a.s.\ the random forest $\RandomMF_{\w_\Gamma}(\omega)$ has a tree $T \subseteq C$ whose space of $\w_\Gamma$-nonvanishing ends is nonempty and perfect\cref{ftnote:perfect_space}.

If moreover, $\mathbf{P}$ is insertion and deletion tolerant and is such that a.e.\ configuration contains a cluster with $\ge3$ $\w_\Gamma$-nonvanishing ends, then the conclusion holds for every $\w_\Gamma$-heavy cluster.
\end{theorem}
We prove this statement in \cref{thm:MFpercA,thm:MFperc:heavy}.

To bridge the settings of measured graphs and percolation,
in \cref{subsubsec:construction_cluster_graphing}, we give a detailed elementary exposition of the cluster graphing construction of Gaboriau \cite[Sections 2.2--2.3]{Gaboriau05}.
Our version covers the case of nonunimodular transitive graphs and more generally, invariant random structures on a countable set.
It is akin to the construction given in \cite[Section~3.2]{JKLcber}, but includes the treatment of measure and the explicit calculation of the Radon--Nikodym cocycle \cref{Radon-Nikodym_of_E_V}.

In \cref{subsubsec:applications_cluster_graphings}, we present applications of our results to invariant percolations on transitive graphs (\cref{cluster-graphing_nonamenable,ex:GP}).
We also derive a further combinatorial property called \defn{infinite visibility} (introduced in \cite[8.1]{Ts:hyperfinite_ergodic_subgraph}) of percolation configurations in such graphs (\cref{thm:visibility}).
Lastly, in \cref{thm:indist_forest} we obtain an invariant random subforest of a transitive graph $G$, which has indistinguishable components and is very similar to $\RandomMF_{\w_\Gamma}(G)$, motivating \cref{q:indist} below.

\subsection{Future directions}

In the present work we consider only locally finite Borel graphs. In fact, our analysis heavily relies on compactness of the space of ends for each component of the graph. Noticing that \cref{Gaboriau-Ghys,Anush-Robin} do not have this restriction, it is of interest to generalize \cref{intro:witnessing_nonamenability}.

\begin{question}\label{q:locally_ctbl}
Does \cref{intro:witnessing_nonamenability} extend to the locally infinite setting?
\end{question}

Both \cref{intro:witnessing_nonamenability,intro:MF_in_deletion_percolation} yield a forest that contains a tree, whose space of nonvanishing ends is nonempty and perfect, but the theorems do not claim that it is a closed subset of the space of all ends. It is of interest to further understand this space.

We think that actually all nonvanishing ends should always be of the same weight, i.e., in case of unique nonvanishing end of infinite weight, all other ends of that component should be vanishing (for a.e.\ component).
In fact, this would follow from a positive answer to the following more general question, which has a positive answer for pmp graphs due to Epstein and Hjorth \cite[Theorem 1.4]{Epstein-Hjorth:end-selection}:

\begin{question}\label{q:selected_end}
Let $G$ be a quasi-pmp Borel graph and let $\w$ denote its Radon--Nikodym cocycle with respect to the underlying probability measure. 
If there is a Borel selection of one $\w$-nonvanishing end in a.e.\ $G$-component, then is it true that a.e.\ $G$-component admits at most 2 nonvanishing ends?
\end{question}

On the percolation side, recall that for a transitive connected locally finite graph $G$, $\RandomMF_{\w_\Gamma}(G)$ is a natural generalization of $\FMSF(G)$ to the nonunimodular setting.
Hence it would be interesting to extend various properties of $\FMSF(G)$ to its more general counterpart.
\cref{intro:MF_in_deletion_percolation} gives at least one tree whose set of nonvanishing ends is nonempty and perfect. 
Tim\'ar \cite{timar2006ends} showed that for transitive unimodular graph $G$, the number of ends of every tree in $\FMSF(G)$ is the same.

\begin{question}\label{q:everytree}
For a transitive nonunimodular graph $G$, is the number of nonvanishing ends the same for every tree of $\RandomMF_{\w_\Gamma}(G)$?
\end{question}

More generally, one can ask:

\begin{question}\label{q:indist}
Are $\w_\Gamma$-heavy trees of $\RandomMF_{\w_\Gamma}(G)$ indistinguishable in the sense of \cite{LyonsSchramm99}?
\end{question}

When $G$ is unimodular and $\FMSF(G)$ has infinitely many ends, the indistinguishably of $\FMSF(G)$ was shown in \cite[Theorem 1.2]{Timar18ind}.
One consequence of this result is a simplification of the construction of the treeable ergodic subrelation in the Gaboriau--Lyons theorem \cite[Proposition 13]{Gaboriau-Lyons}. 
The indistinguishable ``cousin'' of the Free ${\w_\Gamma}$-Maximal Spanning Forest on $G$ that we construct in \cref{thm:indist_forest} serves as a further motivation for the last question.

%This serves as an additional motivation (see \cref{rem:indist}) for answering \cref{q:indist}. 
%\textcolor{black}{In \cref{thm:indist_forest} we give a slightly modified version of $\RandomMF_{\w_\Gamma}$ which has indistinguishable components.}

%More generally, one can ask about other Borel properties and if they are the same for every tree in $\RandomMF_{\w_\Gamma}(G)$. 
%We call the set of components $\mathcal{C}$ of a percolation process $\textbf{P}$ \defn{indistinguishable} if for any $\Gamma$-invariant Borel property $\mathcal{A}$ either almost surely all components in the set $\mathcal{C}$ satisfy $\mathcal{A}$ or almost surely all of them do not satisfy $\mathcal{A}$. For example, Lyons and Schramm \cite{LyonsSchramm99} showed that all infinite components under insertion tolerant $\Gamma$-invariant percolation on a transitive unimodular graphs are indistinguishable, while the same does not hold for the infinite components in the nonunimodular setting even for Bernoulli percolation. \cite{Penfei18} showed that $\w$-heavy components are indistinguishable for the Bernoulli percolation on nonunimodular transitive graphs.

\subsection*{Organization}

The rest of the paper organized as follows. In \cref{sec:prelim} we discuss preliminaries. %for Borel graphs and their end spaces (\cref{sec:prelim-graph} and \ref{sec:prelim-ends}); Graphs with relative weight functions and their ends, where we focus on the notion of nonvanishing ends (\cref{sec:prelim-weights} and \ref{sec:cocycle}); and CBERs and quasi-pmp graphs (\cref{sec: CBER and quasi-pmp graphs}). 
In \cref{sec:MF} we present the construction of the maximal forest in Borel setting and prove \cref{intro:witnessing_nonamenability}. \cref{sec:MFperc} reviews the significance of random spanning forests in percolation theory and presents our construction in the context of this theory. Finally, in \cref{sec:appl} we present the cluster graphing construction and applications of our main results on concrete examples.

\subsection*{Acknowledgments}
We thank Russell Lyons, \'Ad\'am Tim\'ar, and Robin Tucker-Drob for many insightful conversations, as well as the anonymous referees for helpful comments and corrections.
R.C.\ was supported by NSF Grant DMS-2224709.
G.T.\ and A.Ts.\ were supported by NSF Grant DMS-1855648, and A.Ts.\ was also supported by NSERC Discovery Grant RGPIN-2020-07120.

%%%%%%%%%%%%%%%%%%%%%%%%
% Preliminaries
%%%%%%%%%%%%%%%%%%%%%%%%
\section{Preliminaries}
\label{sec:prelim}

%%%%%%%%%%%%%%%%%%%%%%%%%%%%%%%%%%%%
\subsection{Graphs}
\label{sec:prelim-graph}

Throughout this paper, by a \defn{graph} we mean a simple undirected graph, represented formally as a symmetric \emph{reflexive} binary edge relation $G \subseteq X^2$ on the vertex set $X$.
We therefore write
\begin{equation*}
(x,y) \in G  \iff  x \mathrel{G} y
\end{equation*}
interchangeably to mean that there is an edge from $x$ to $y$.
We will refer to the graph by $(X,G)$ or, when $X$ is clear from context, simply by $G$.

\begin{definition}
\label{def:graph-conn}
By a \defn{connected} graph $(X,G)$, we will mean one \emph{whose vertex set $X$ is nonempty} and such that any $x, y \in X$ are joined by a path.
A subset $A \subseteq X$ is \defn{$G$-connected} if the induced subgraph $G|A$ is.
%, and \defn{$G$-coconnected} if $X \setminus A$ is $G$-connected.

We write $\#E_G \subseteq X^2$ for the \defn{induced equivalence relation} relating two vertices iff they are joined by a path, and write $X/G := X/\#E_G$ for the quotient set, i.e., connected components of $G$.
Note that since we are considering reflexive graphs, $\#E_G$ is itself a graph.
For a subset $A \subseteq X$, we write $[A]_G := [A]_{\#E_G}$ for its \defn{$G$-saturation}, i.e., the union of all components intersecting $A$.
\end{definition}

\begin{definition}
A \defn{simple path} $x_0 \mathrel{G} x_1 \mathrel{G} \dotsb \mathrel{G} x_n$ will mean one with no repeated vertices.
A \defn{simple cycle} will mean such a path with $n \ge 3$, $x_0 = x_n$ and no other repeated vertices.
(Recall that we are working with reflexive graphs.)
\defn{Acyclic} means there are no simple cycles.

A \defn{forest} is an acyclic graph; a \defn{tree} is a connected forest.
\end{definition}

\begin{definition}
\label{def:graph-bdy}
Given a graph $(X,G)$ and subset $A \subseteq X$,
\begin{itemize}
\item
the \defn{inner (vertex) boundary} of $A$ is the set of vertices in $A$ adjacent to $X \setminus A$;
\item
the \defn{outer (vertex) boundary} of $A$ is the inner boundary of $X \setminus A$;
%\item
%the \defn{total vertex boundary} is the union of the inner and outer boundaries;
\item
the \defn{edge boundary} of $A$ is the set of all $G$-edges with one end-vertex in $A$ and the other in $X \setminus A$.
\end{itemize}
Note that if $G$ is locally finite, then one of these notions of boundary of $A$ is finite iff all are, in which case we say that $A$ has \defn{finite boundary} or is \defn{boundary-finite}.
\end{definition}

\begin{remark}
A connected locally finite graph has only countably many boundary-finite subsets.
\end{remark}

%%%%%%%%%%%%%%%%%%%%%%%%%%%%%%%%%%%%
\subsection{End spaces}
\label{sec:prelim-ends}

In this work, we find it convenient to take the following point-set topological approach to ends, which is rooted in the origins of the concept due Freudenthal \cite{Freends} and Hopf \cite{Hopends}, and explicitly introduced in a graph-theoretic context by Polat \cite{Polatends} (phrased in the slightly different but equivalent context of uniformities).

\begin{definition}
\label{def:ends-space}
For a connected locally finite graph $(X,G)$, its \defn{end compactification}
\begin{equation*}
\^X = \^X^G \supseteq X
\end{equation*}
is the space of all ultrafilters on the countable Boolean algebra of boundary-finite subsets of vertices $A \subseteq X$, i.e., the Stone space of said algebra, where we identify vertices $x \in X$ with principal ultrafilters in $\^X$.
In other words, $\^X$ is the unique zero-dimensional Polish compactification of the discrete space $X$ whose clopen sets are precisely the closures of boundary-finite $A \subseteq X$; we denote said closures by $\^A \subseteq \^X$.

The \defn{end space} of $(X,G)$ is
\begin{equation*}
\partial X = \partial^G X := \^X^G \setminus X,
\end{equation*}
or equivalently the closed subspace of nonisolated points in $\^X^G$, which are the \defn{ends} of $G$.
\end{definition}

\begin{remark}[Various definitions of ends]
\label{rmk:ends-otherdef}
There are several different ways to present the definition of ends and end compactification for locally finite graphs.
One way, originating with Halin \cite{Halends} and standard in combinatorics, is to define ends as equivalence classes of rays (meaning infinite simple paths $(x_n)_{n \in \#N}$), where we consider two rays equivalent if their tails stay in the same connected component after the removal of any finite subset of vertices (which is equivalent to the existence of a third ray intersecting each of the two rays infinitely often).
Instead of rays, one could also consider arbitrary sequences of vertices whose tails stay in the same component after removing any finite set; these are the so-called end-convergent sequences, commonly used in percolation theory \cite[page~242]{LyonsBook}.

To see the equivalence between the definition with, say, rays, and \cref{def:ends-space} in terms of ultrafilters, note that an equivalence class of rays is uniquely determined by the ultrafilter of those boundary-finite sets containing one of their tails.
Conversely, it is easily seen that every nonprincipal ultrafilter of boundary-finite sets is determined in this manner by some ray; thus there is a canonical bijection between the ray-based and ultrafilter-based definitions of ends.
However, we find the latter more convenient for our purposes in this paper, since it emphasizes the primacy of neighborhoods of ends, as opposed to limits of sequences which are awkward when working with ends topologically (as in e.g., \cref{def:weights-nonvanish}).
\end{remark}

\begin{remark}
\label{rmk:ends-inf}
A clopen set $\^A \subseteq \^X$ contains an end iff $A \subseteq X$ is infinite, by compactness.
\end{remark}

\begin{remark}
For boundary-finite $A \subseteq X$, it is easily seen that $B \subseteq A$ has finite boundary in $G$ iff it has finite boundary in the induced subgraph $G|A$.
In other words, the notation $\^A$ may also be consistently interpreted as the end compactification of $(A,G|A)$, which embeds into $\^X$ as a clopen subspace.
Because of this, we will sometimes refer to an end $\xi\in \partial^G X$ which is in $\^A$ as an \defn{end in $A$} or say that \defn{$A$ is a neighborhood of $\xi$}.

In contrast, \emph{for boundary-infinite $Y \subseteq X$, we must carefully distinguish between $\^Y^{G|Y}$ and the closure $\-Y$ of $Y$ in $\^X^G$}.
There is always a canonical map from the former space to the latter, but it need not be injective; see \cref{ex:ends-map-noninj} below.
\end{remark}

\begin{remark}
\label{rmk:ends-basis-conn}
Every boundary-finite $A \subseteq X$ is the finite (disjoint) union of its connected components, which are also boundary-finite.
Thus, the closures $\^C \subseteq \^X$ of \emph{connected} boundary-finite $C \subseteq X$ also form an open basis for $\^X$.
\end{remark}

\begin{lemma}
\label{thm:ends-basis}
The following families of subsets of $X$ are the same:
\begin{enumerate}[label=(\roman*)]
\item \label{thm:ends-basis:cut}
connected boundary-finite $C \subseteq X$ such that $X \setminus C$ is also connected;
\item \label{thm:ends-basis:furc}
connected components $C$ of $X \setminus F$ for some finite connected $F \subseteq X$.
\end{enumerate}
The family of $\^C \subseteq \^X$ for all (infinite) such $C$ form a neighborhood basis for each end $\xi \in \partial X$.
\end{lemma}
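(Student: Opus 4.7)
The plan is to split the statement into two parts: the equality of families (i) and (ii), and the neighborhood basis property. Both will rely repeatedly on the fact that $G$ is connected and locally finite, so that any finite set of vertices can be joined by a finite connected subgraph in $X$, and boundary-finite sets have finite outer boundaries.

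For the equivalence, (ii) $\Rightarrow$ (i): if $C$ is a component of $X \setminus F$ for $F$ finite, connected, and nonempty, then $C$ is connected by definition, its edge boundary is contained in the finite set of edges from $F$ to its $G$-neighbors, and $X \setminus C = F \cup \bigcup_{C' \ne C} C'$ (the union over the other components of $X \setminus F$) is connected: each such $C'$ is connected, has an edge into $F$ by connectedness of $X$, and $F$ itself is connected and shared by all pieces. For (i) $\Rightarrow$ (ii): let $F_0$ be the outer boundary of $C$, which is a finite nonempty subset of $X \setminus C$ (nonempty because $X \setminus C$ is nonempty and $X$ is connected); use connectedness of $X \setminus C$ to join the vertices of $F_0$ by finitely many paths in $X \setminus C$, and let $F \subseteq X \setminus C$ be their finite connected union, so $F \supseteq F_0$. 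Then $C \subseteq X \setminus F$, no $G$-edge of $X \setminus F$ can leave $C$ (such an edge would have to hit $F_0 \subseteq F$), and $C$ is connected, so $C$ is a single component of $X \setminus F$.

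For the basis property, fix an end $\xi \in \partial X$ and a basic clopen neighborhood $\^A$ with $A \subseteq X$ boundary-finite. Using connectedness of $X$, enlarge the outer boundary $F_0$ of $A$ to a finite connected set $F \supseteq F_0$ by joining its finitely many vertices with paths in $X$. The components $C_i$ of $X \setminus F$ are boundary-finite, and by construction (together with $F$) their closures $\^{C_i}$ form a clopen partition of $\^X$, so there is a unique $C$ with $\xi \in \^C$. Since $F \supseteq F_0$ and no edge of $X \setminus F_0$ crosses between $A$ and $X \setminus A$, each $C_i$ lies entirely in $A$ or entirely in $X \setminus A$. If $C \subseteq X \setminus A$ then $\xi \in \^C \subseteq \^{X \setminus A}$, contradicting $\xi \in \^A$, since an ultrafilter $\xi$ on boundary-finite sets can contain only one of $A, X \setminus A$, so $\^A$ and $\^{X \setminus A}$ are disjoint (in fact complementary) clopens. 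Hence $C \subseteq A$, so $\xi \in \^C \subseteq \^A$, and $C$ is infinite because its closure contains the end $\xi$ (\cref{rmk:ends-inf}).

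The only real obstacle is keeping the connectedness requirement on $F$ in (ii) in sync with the paddings: in (i) $\Rightarrow$ (ii) the paths used to construct $F$ must stay inside $X \setminus C$ (to preserve $C$ as a single component of $X \setminus F$), while for the basis argument the paths need only stay inside $X$. Once these padding steps are handled, everything else is bookkeeping with the clopen ultrafilter basis of $\^X$ already set up in \cref{def:ends-space} and \cref{rmk:ends-basis-conn}.
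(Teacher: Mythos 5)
Your proposal is correct and takes essentially the same approach as the paper: the paper likewise builds $F$ from the outer boundary of $C$ padded with paths in $X \setminus C$, and obtains the basis property as the $n=1$ case of \cref{thm:ends-furc}, whose proof is exactly your final paragraph. You merely unpack the steps the paper dismisses as "clear" (the direction (ii)$\implies$(i)) or delegates to the later lemma.
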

\begin{proof}
Clearly \cref{thm:ends-basis:furc}$\implies$\cref{thm:ends-basis:cut}; to see the converse, let $F$ be the outer boundary of $C$ together with finitely many paths in $X \setminus C$ to make $F$ connected.
That such $\^C$ form a basis for ends is the trivial $n=1$ case of \cref{thm:ends-furc} below.
\end{proof}

\begin{definition}
\label{def:ends-furc}
For a finite connected $F \subseteq X$, we call the components of $X \setminus F$ the \defn{sides} of $F$.

For $n \in \#N^+$, an \defn{$n$-furcation} is a finite (nonempty) connected set $F \subseteq X$ with at least $n$ infinite sides.
An \defn{$n$-furcation vertex} is a vertex $x$ such that $\{x\}$ is an $n$-furcation.
When $n = 2, 3$, we say \defn{bifurcation}, \defn{trifurcation} respectively.
(Note that a trifurcation is also a bifurcation.)
\end{definition}

\begin{lemma}
\label{thm:ends-furc}
$\abs{\partial X} \ge n$ iff there is at least one $n$-furcation.
In that case, for any $n$ distinct ends $u_1, \dotsc, u_n \in \partial X$ and clopen neighborhoods $u_i \in \^A_i \subseteq \^X$, there is an $n$-furcation $F$ and distinct sides $C_i \subseteq X \setminus F$ of it such that $u_i \in \^C_i \subseteq \^A_i$.
(In other words, the products of distinct sides of $n$-furcations form a neighborhood basis for each pairwise distinct $(u_1, \dotsc, u_n) \in (\partial X)^n$.)
\end{lemma}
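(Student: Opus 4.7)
The plan is to prove the equivalence and the quantified refinement statement simultaneously. The backward direction of the equivalence is immediate: given an $n$-furcation $F$ with distinct infinite sides $C_1, \dotsc, C_n$, each $C_i$ is boundary-finite (its outer boundary lies in the finite set $F$, and $G$ is locally finite), so by \cref{rmk:ends-inf} each clopen set $\widehat{C}_i$ contains at least one end $u_i$. Since the $C_i$'s are pairwise disjoint and any ultrafilter contains at most one of two disjoint sets, the $\widehat{C}_i$'s are pairwise disjoint; hence the chosen $u_i$'s are distinct and $\lvert\partial X\rvert \ge n$.

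For the forward direction together with the refinement statement, suppose distinct ends $u_1, \dotsc, u_n \in \partial X$ and clopen neighborhoods $u_i \in \widehat{A}_i$ are given. First shrink the $\widehat{A}_i$'s to be pairwise disjoint, which is possible because $\widehat{X}$ is Hausdorff. Applying \cref{thm:ends-basis} separately to each $u_i$ yields a finite connected $F_i \subseteq X$ and a component $C_i^0$ of $X \setminus F_i$ with $u_i \in \widehat{C_i^0} \subseteq \widehat{A}_i$. Let $F$ be any finite connected superset of $F_1 \cup \dotsb \cup F_n$ in $X$, obtained by adjoining finite paths in $X$ to connect the $F_i$'s.

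Now let $S_i$ be the unique component of $X \setminus F$ whose closure in $\widehat{X}$ contains $u_i$, which exists by \cref{thm:ends-basis} applied to $u_i$ and the enlarged set $F$. Since $S_i$ is connected and disjoint from $F \supseteq F_i$, it lies entirely within a single side of $F_i$; that side must be $C_i^0$, since among the sides of $F_i$ only $C_i^0$ has $u_i$ in its closure. Therefore $\widehat{S}_i \subseteq \widehat{C_i^0} \subseteq \widehat{A}_i$. The $S_i$'s are pairwise distinct, for otherwise $S_i = S_j$ would force $u_j \in \widehat{S}_j = \widehat{S}_i \subseteq \widehat{A}_i$, contradicting $\widehat{A}_i \cap \widehat{A}_j = \emptyset$; and each $S_i$ is infinite by \cref{rmk:ends-inf} since its closure contains an end. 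Thus $F$ is an $n$-furcation with distinct infinite sides $C_i := S_i$ satisfying $u_i \in \widehat{C}_i \subseteq \widehat{A}_i$, which also yields the forward direction of the equivalence upon specializing $\widehat{A}_i := \widehat{X}$.

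The only mildly nontrivial step is the containment $S_i \subseteq C_i^0$: enlarging $F_i$ to $F$ can only refine the side structure, so the component of $X \setminus F$ whose closure contains $u_i$ must sit inside the component of $X \setminus F_i$ whose closure contains $u_i$. This follows cleanly from connectedness of $S_i$ together with its disjointness from $F_i$, and it is the one place where the argument genuinely requires handling more than one end at a time rather than simply iterating \cref{thm:ends-basis}.
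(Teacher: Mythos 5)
Your argument is mathematically sound and follows essentially the same construction as the paper's, but as written it is circular. You invoke the neighborhood-basis claim of \cref{thm:ends-basis} to produce the pairs $(F_i, C_i^0)$, yet the paper establishes exactly that part of \cref{thm:ends-basis} by deferring to ``the trivial $n=1$ case of \cref{thm:ends-furc} below''. So your proof of \cref{thm:ends-furc} quietly presupposes \cref{thm:ends-furc}, and this is not merely a reduction to a base case you could treat separately: applied at $n=1$, your argument would again invoke the basis claim, which again defers to $n=1$.

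The fix is small and brings your proof in line with the paper's. After shrinking to pairwise disjoint clopen sets $\^B_i \subseteq \^A_i$ with $B_i \subseteq X$ boundary-finite, take $F_i$ to be the outer boundary of $B_i$ together with finitely many paths to make it connected, and let $C_i^0$ be the side of $F_i$ containing $u_i$; existence and uniqueness of $C_i^0$ follow directly from the ultrafilter description of $\^X$ (the finite set $F_i$ has finitely many boundary-finite sides, whose closures partition $\^{X \setminus F_i} \ni u_i$), with no appeal to \cref{thm:ends-basis}. One then verifies $C_i^0 \subseteq B_i$ by connectedness: $C_i^0$ is connected, disjoint from the outer boundary of $B_i$, and meets $B_i$ since $u_i \in \^{C_i^0} \cap \^B_i$. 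From that point your argument about $S_i \subseteq C_i^0$ and distinctness of the $S_i$ goes through verbatim, and is in fact slightly more explicit than the paper's, which asserts but does not spell out that the resulting sides lie in the given $\^A_i$. Your second citation of \cref{thm:ends-basis} (for the uniqueness of the side $S_i$ of $F$ containing $u_i$) is likewise unnecessary and should be replaced by the same ultrafilter observation.
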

\begin{proof}
If there is an $n$-furcation, then (the closures of) its $\ge n$ infinite sides each contain an end (by \cref{rmk:ends-inf}).
Conversely, if $u_1, \dotsc, u_n$ are distinct ends, each contained in a clopen neighborhood $\^A_i$, then we may find $u_i \in \^B_i \subseteq \^A_i$ such that the $\^B_i$ are pairwise disjoint, and let $F$ be the union of the outer boundaries of the $B_i$ together with finitely many paths to make $F$ connected; then the $u_i$ must belong to (the closures of) distinct sides $C_i$ of $F$, whence $F$ is an $n$-furcation.
\end{proof}

\begin{definition}
\label{def:ends-map}
Let $f : X -> Y$ be a map between the vertex sets of two connected locally finite graphs $(X,G)$ and $(Y,H)$.
We extend $f$ by continuity to a (partial) map
\begin{align*}
\^f : \^X^G &--> \^Y^H \\
\xi &|--> \lim_{X \ni x -> \xi} f(x),
\end{align*}
where this limit exists; it clearly exists and equals $f(x)$ for vertices $x \in X$.
If it also exists for every end $\xi \in \partial^G X$, we call $\^f$ the \defn{map induced by $f$} (it is then automatically continuous). When $(X,G)$ is a subgraph of $(Y,H)$, we denote the inclusion map by $\iota : X -> Y$ and the induced map by $\^\iota : \^X^G -> \^Y^H$.
\end{definition}

% Proof of automatic continuity of $\^f$:
% If not, by compactness, there is a sequence $u_i -> u \in \^X$ such that $f(u_i) -> v \ne f(u) \in \^Y$.
% Let $\^B \subseteq \^Y$ be a clopen set containing $f(u)$ but not $v$; passing to a tail, we may assume each $f(u_i) \not\in \^B$.
% By definition of $\^f$, $f^{-1}(B) \supseteq A$ for some clopen $\^A \ni u$, and $X \setminus f^{-1}(B) \supseteq A_i$ for some clopen $\^A_i \ni u_i$ for each $i$.
% But also, $u_i \in \^A$ for sufficiently large $i$, whence $A \cap A_i \ne \emptyset$, a contradiction.

\begin{lemma}
\label{thm:ends-map-fin1hom}
If $f : (X,G) -> (Y,H)$ is a finite-to-one graph homomorphism, in particular if $f$ is the inclusion of a subgraph, then the induced map $\^f$ exists, and restricts to a map $\partial^G X -> \partial^H Y$.
\end{lemma}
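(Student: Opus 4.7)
The plan is to verify simultaneously that the limit $\lim_{X \ni x \to \xi} f(x)$ exists in $\^Y^H$ for every end $\xi \in \partial^G X$ and that it lies in $\partial^H Y$. I would work with the neighborhood basis of $\xi$ provided by \cref{thm:ends-basis}: fix an exhaustion of $X$ by finite connected sets $F_1 \subseteq F_2 \subseteq \cdots$, let $C_n$ be the (necessarily infinite) component of $X \setminus F_n$ corresponding to $\xi$, and study the decreasing sequence of closures $\overline{f(C_n)} \subseteq \^Y^H$.

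The first key step is that $f(C_n)$ eventually avoids any prescribed finite $K \subseteq Y$: since $f$ is finite-to-one, $f^{-1}(K)$ is finite and hence contained in $F_n$ for large $n$, whence $C_n \cap f^{-1}(K) = \emptyset$, i.e., $f(C_n) \cap K = \emptyset$. By compactness of $\^Y^H$, the intersection $\bigcap_n \overline{f(C_n)}$ is therefore nonempty, and by \cref{rmk:ends-inf} applied in $H$ it is contained in $\partial^H Y$.

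The second step is to rule out more than one limit end in that intersection. If distinct $\eta_1, \eta_2$ both lay in $\bigcap_n \overline{f(C_n)}$, I would apply \cref{thm:ends-furc} to obtain a bifurcation $F' \subseteq Y$ with distinct sides $D_1, D_2$ containing $\eta_1, \eta_2$ respectively. Then $f(C_n) \cap D_i \ne \emptyset$ for every $n$ and $i = 1, 2$; but $f(C_n)$ is $H$-connected because $f$ is a graph homomorphism and $C_n$ is $G$-connected, so any walk inside $f(C_n)$ from a vertex of $D_1$ to a vertex of $D_2$ must pass through $F'$, forcing $C_n \cap f^{-1}(F') \ne \emptyset$. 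This contradicts the first step as soon as $F_n \supseteq f^{-1}(F')$, which happens for large $n$ since $f^{-1}(F')$ is finite.

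These two steps give $\^f(\xi) \in \partial^H Y$, yielding both halves of the lemma; continuity of the resulting $\^f$ is then automatic from its pointwise-limit definition, as observed in \cref{def:ends-map}. The main subtlety is the interaction of the two hypotheses: being finite-to-one enforces ``escape to infinity'' of images of neighborhoods of an end, while being a graph homomorphism preserves the connectedness needed to thread the bifurcation argument; neither hypothesis alone would suffice.
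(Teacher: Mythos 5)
Your proof is correct. You establish the two facts the paper establishes — that $f(x)$ escapes every finite set as $x \to \xi$ (using only finite-to-oneness) and that it cannot cluster around two distinct ends (using both hypotheses) — but you organize the second step quite differently from the paper. The paper's argument pulls back a clopen set $\^A$ separating the two putative limit ends and shows directly that $f^{-1}(A)$ is boundary-finite, since $f$ maps the inner boundary of $f^{-1}(A)$ into the (finite) inner boundary of $A$ and preimages of finite sets are finite; then $\xi$ lies on exactly one side of $f^{-1}(A)$, ruling out clustering at both. You instead invoke \cref{thm:ends-furc} to separate the two cluster ends by a bifurcation $F'$ and exploit $H$-connectedness of $f(C_n)$ to force a walk from $D_1$ to $D_2$ through $F'$, hence a vertex of $C_n$ in the finite set $f^{-1}(F')$, which your first step eventually forbids. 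The two arguments use the graph-homomorphism hypothesis in different ways: the paper uses it to control inner boundaries of preimages, while you use it to propagate connectedness forward along $f$. Your route is somewhat more concrete and makes the use of compactness of $\^Y^H$ explicit (the paper leaves it implicit in passing from ``unique cluster point'' to ``the limit exists''); the paper's is a touch shorter since it bypasses the bifurcation lemma. One small inaccuracy: you cite \cref{rmk:ends-inf} to justify $\bigcap_n \overline{f(C_n)} \subseteq \partial^H Y$, but that remark concerns clopen sets $\^A$ and the sets $f(C_n)$ need not be boundary-finite in $Y$; what you actually need, and what your own first step gives, is simply that each vertex $y$ is isolated in $\^Y^H$ and $f(C_n)$ eventually misses $\{y\}$, so $y \notin \overline{f(C_n)}$ for large $n$.
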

\begin{proof}
As $X \ni x -> \xi\in \partial X$, $f(x)$ cannot cluster around a vertex $y \in Y$, since $A := f^{-1}(y) \subseteq X$ is finite and so $\^{X \setminus A}$ is a neighborhood of $\xi$ which $f$ maps to $Y \setminus \{y\}$.
It remains to rule out the possibility that $f(x)$ clusters around two distinct ends $\zeta_1, \zeta_2 \in \partial Y$.
Indeed, let $\^A \subseteq \^Y$ be a clopen set such that $\zeta_1 \in \^A \not\ni \zeta_2$, with $A \subseteq Y$ boundary-finite.
Since $f$ is a graph homomorphism, $f$ maps the inner boundary of $f^{-1}(A)$ into that of $A$; since $f$ is also finite-to-one, $f^{-1}(A)$ thus has finite boundary, and so either $\^{f^{-1}(A)} \subseteq \^X$ or its complement is a neighborhood of $\xi$, but not both, which means $f(x)$ cannot cluster around both $\zeta_1 \in \^A$ and $\zeta_2 \in \^Y \setminus \^A$ as $x -> \xi$.
\end{proof}

\begin{example}
\label{ex:ends-rays}
If $R$ is the infinite ray graph $0 -- 1 -- 2 -- \dotsb$ on vertices $\#N$, then an injective graph homomorphism $f : (\#N,R) -> (X,G)$ takes the unique end of $R$ to an end of $G$.
This recovers the correspondence with the ray-based definition of ends, as in \cref{rmk:ends-otherdef}.
% It is easily seen that every end of $G$ can be approached along a ray in this way; thus, ends may be equivalently represented as certain equivalence classes of rays.
\end{example}

\begin{example}
\label{ex:ends-map-noninj}
Even for the inclusion $\iota : (X,G) -> (Y,H)$ of a subgraph, with either the same vertex set $X = Y$ and a subset of edges $G \subseteq H$, or the induced subgraph $G = H|X$ on a subset of vertices $X \subseteq Y$, there is no reason for the induced map $\^\iota : \partial^G X -> \partial^H Y$ to be injective.
The square lattice graph on $Y = \#Z^2$ is one-ended; by removing either vertices or edges, we can turn it into a tree with $2^{\aleph_0}$ ends.
\end{example}

\begin{lemma}
\label{thm:ends-map-conn}
Under the assumptions of \cref{thm:ends-map-fin1hom}, if also $f^{-1}$ preserves (nonempty) connected subsets (it is enough to check 1- and 2-element subsets), then $\^f$ restricts to a homeomorphism $\partial^G X \cong \partial^H Y$.
\end{lemma}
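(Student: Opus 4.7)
My plan is to establish the homeomorphism in four steps, mostly reducing to continuous-bijection-from-compact-to-Hausdorff at the end.

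First, I would verify the parenthetical claim in the hypothesis: if $f^{-1}$ preserves connectivity on singletons (so that fibers are connected and, in particular, nonempty, making $f$ surjective) and on 2-element subsets, then it preserves connectivity on any nonempty connected $F' \subseteq Y$. Write $F'$ as an increasing union $F'_1 \subseteq F'_2 \subseteq \dotsb$ of finite connected subgraphs built one adjacent vertex at a time. By induction, each $f^{-1}(F'_n)$ is connected --- the inductive step uses that $f^{-1}(\{y_k, y_{n+1}\})$ is connected for a new edge $y_k \mathrel{H} y_{n+1}$, and this connected set intersects $f^{-1}(F'_n)$ along the fiber $f^{-1}(y_k)$ --- so the increasing union $f^{-1}(F')$ is connected.

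For surjectivity of $\^f|\partial^G X$: given $\zeta \in \partial^H Y$, pick $y_n -> \zeta$ in $Y$ and lift to $x_n \in f^{-1}(y_n)$. By compactness of $\^X^G$, the sequence $(x_n)$ has a cluster point $\xi$; this cluster point cannot be a vertex of $X$, because $f$ being finite-to-one would then force $y_n = f(x_n)$ to cluster at a vertex of $Y$, contradicting $y_n -> \zeta$. Hence $\xi \in \partial^G X$, and $\^f(\xi) = \zeta$ by the continuity of $\^f$ already established in \cref{thm:ends-map-fin1hom}.

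The crux is injectivity, which I would prove by contradiction. Suppose distinct $\xi_1, \xi_2 \in \partial^G X$ satisfy $\^f(\xi_1) = \^f(\xi_2) = \zeta$. By \cref{thm:ends-furc} with $n = 2$, there is a finite connected bifurcation $F \subseteq X$ with distinct infinite sides $C_1, C_2$ such that $\xi_i \in \^{C_i}$, and these clopen sets are disjoint. Since $F$ is finite and connected and $f$ is a graph homomorphism, $f(F) \subseteq Y$ is finite and connected; let $D$ be the side of $f(F)$ whose closure contains $\zeta$. By the first step $f^{-1}(D)$ is connected, and it is disjoint from $F$ (since $D \cap f(F) = \emptyset$), so it lies in a single side of $F$ --- say $C_1$ without loss of generality. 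But then taking $x_n -> \xi_2$ in $C_2$, continuity gives $f(x_n) -> \zeta \in \^D$, so eventually $f(x_n) \in D$, forcing $x_n \in f^{-1}(D) \subseteq C_1$, contradicting $x_n \in C_2$.

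Finally, $\^f$ restricts to a continuous bijection from the compact space $\partial^G X$ to the Hausdorff space $\partial^H Y$, hence a homeomorphism. The main obstacle is the injectivity argument: the strengthened hypothesis on $f^{-1}$ is exactly what is needed to conclude that the preimage of a side of $f(F)$ is a connected subset of $X \setminus F$ --- without this, the preimage could fragment across several sides of $F$, leaving no way to separate $\xi_1$ from $\xi_2$.
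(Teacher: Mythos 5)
Your proof is correct and follows essentially the same approach as the paper: both hinge on the observation that $f^{-1}$ of a side of $f(F)$ is a connected subset of $X \setminus F$, hence lies in a single side of $F$, which is what separates ends. The main cosmetic differences are that you prove injectivity by contradiction using one particular bifurcation $F$ whereas the paper shows directly that preimages of two equal-image ends lie on the same side of \emph{every} finite connected $F$ and invokes \cref{thm:ends-basis}; and you spell out the parenthetical claim, the surjectivity via sequences, and the compact-to-Hausdorff step, which the paper leaves terse or implicit.
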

\begin{proof}
Recall from \cref{def:graph-conn} that ``connected'' includes ``nonempty''; thus $f$ is surjective, hence so is $\^f$ by the density of $Y \subseteq \^Y^H$.
To check injectivity: let $\xi, \zeta \in \partial X$ such that $\^f(\xi) = \^f(\zeta)$.
Then for any finite connected $F \subseteq X$, since $f$ is a graph homomorphism, $f(F) \subseteq Y$ is still connected (and finite); and $\^f(\xi) = \^f(\zeta)$ lies on one side $D \subseteq Y \setminus f(F)$ of it.
Since $f^{-1}$ preserves connectedness, $f^{-1}(D) \subseteq X \setminus f^{-1}(f(F)) \subseteq X \setminus F$ is contained in one side of $F$, which thus contains both $\xi, \zeta$.
So $\xi, \zeta$ lie on the same side of every finite connected $F$, whence $\xi=\zeta$ (by \cref{thm:ends-basis}).
\end{proof}

\begin{definition}
\label{def:ends-nonconn}
For a possibly disconnected locally finite graph $(X,G)$, we define its \defn{end compactification}, respectively \defn{end space}, to be the disjoint union of those of its components:
\begin{align*}
\^X = \^X^G &:= \bigsqcup_{C \in X/G} \^C^G, \\
\partial X = \partial^G X &:= \bigsqcup_{C \in X/G} \partial^G C = \^X^G \setminus X.
\end{align*}
Note that these are locally compact spaces.
The notions of \defn{$n$-furcation} and \defn{side} are interpreted as in \cref{def:ends-furc} within a single $G$-component.

For a map $f : X -> Y$ between the vertex sets of two such graphs $(X,G), (Y,H)$, we define the induced map $\^f : \^X^G -> \^Y^H$ componentwise (i.e., on $\^C^G$ for each $C \in X/G$) as in \cref{def:ends-map}.
This map is guaranteed to exist everywhere if the conditions of \cref{thm:ends-map-fin1hom} are satisfied componentwise, i.e., $f|C : C -> Y$ is a finite-to-one graph homomorphism for each $C \in X/G$.
\end{definition}

%%%%%%%%%%%%%%%%%%%%%%%%%%%%%%%%%%%%
\subsection{Weighted graphs and ends}
\label{sec:prelim-weights}

We denote by $\#R^+$ the multiplicative group of positive reals.

\begin{definition}\label{def:w}
A \defn{weight function} on a graph $(X,G)$ is an arbitrary function $\w : X -> \#R^+$. We often treat $\w$ as an atomic measure on $X$, writing $\w(A) := \sum_{x \in A} \w(x)$ for a set $A \subseteq X$.
\end{definition}

We call a set $A \subseteq X$ \defn{$\w$-finite} if $\w(A) < \infty$; otherwise, we call it \defn{$\w$-infinite}. These notions are respectively called \textbf{$\w$-light} and \textbf{$\w$-heavy} in percolation theory.

\begin{definition}\label{def:weights_of_ends}
\label{def:weights-nonvanish}
Let $\w : X -> \#R^+$ be a weight function.

For an arbitrary subset $A \subseteq X$, we put
\begin{equation*}
\limsup_A \w = \limsup_{x \in A} \w(x) := \inf_{\text{finite } F \subseteq A} \sup_{x \in A \setminus F} \w(x) \in [0,\infty].
\end{equation*}
If this quantity is $0$, we say $A$ is \defn{($\w$-)vanishing}; otherwise $A$ is \defn{($\w$-)nonvanishing}. If $\limsup_A \w <\infty$, we say that $A$ is \defn{($\w$-)bounded}, otherwise $A$ is \defn{($\w$-)unbounded}.

For an end $\xi\in \partial X$, we put
\begin{equation*}
\^\w(\xi) := \limsup_{x -> \xi} \w(x) = \inf_{\^A \ni \xi} \sup_{x \in A} \w(x) \in [0,\infty]
\end{equation*}
(where $\^A$ ranges over clopen neighborhoods of $\xi$).
In other words, $\^\w : \^X -> [0,\infty]$ is the minimal upper semicontinuous extension of $\w$.
If $\^\w(\xi) = 0$, we say that the end $\xi$ is \defn{($\w$\nobreakdash-)vanishing}; otherwise $\xi$ is \defn{($\w$-)nonvanishing}. Similarly, if $\^\w(\xi) < \infty$ we say that $\xi$ is \defn{($\w$-)bounded}, otherwise $\xi$ is \defn{($\w$-)unbounded}.

Let
\begin{equation*}
\partial_\w X = \partial^G_\w X := \set{\xi\in \partial X}{\text{$\xi$ is $\w$-nonvanishing}}.
\end{equation*}
\end{definition}

\begin{remark}
\label{rmk:ends-nonvanish-fsigma}
By upper semicontinuity, $\partial_\w X = \bigcup_n \^\w^{-1}([1/n,\infty]) \subseteq \partial X$ is an $F_\sigma$ subset.
It may not be $G_\delta$, as in \cref{fig:ends-nonvanish-fsigma} where it is a countable dense set.
In other words, \emph{$\partial_\w X \subseteq \partial X$ with the subspace topology may not be Polish!}
\end{remark}

\begin{figure}[H]
\centering
\includegraphics{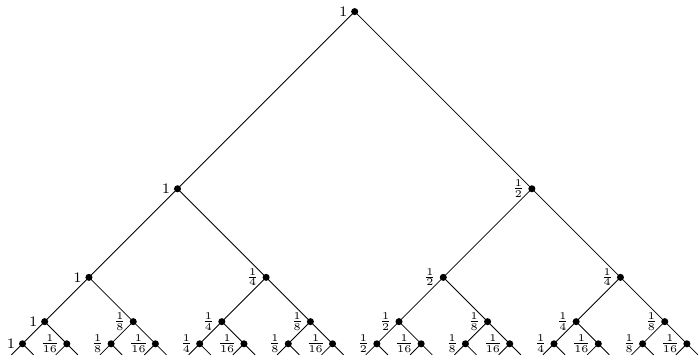}
\caption{A weighted tree with set of nonvanishing ends $F_\sigma$ but not $G_\delta$}
\label{fig:ends-nonvanish-fsigma}
\end{figure}

The notions of vanishing for sets and ends are related as follows (this is analogous to \cref{rmk:ends-inf}):

\begin{lemma}
\label{thm:weights-nonvanish}
\leavevmode
\begin{enumerate}[label=(\alph*)]
\item \label{thm:weights-nonvanish:end}
For an end $\xi\in \partial X$,
\begin{equation*}
\^\w(\xi) = \inf_{\^A \ni \xi} \limsup_A \w.
\end{equation*}
Thus if $\xi$ is nonvanishing, then every boundary-finite $A \subseteq X$ containing $\xi$ is nonvanishing.
\item \label{thm:weights-nonvanish:set}
For an infinite boundary-finite $A \subseteq X$ contained in a single $G$-component,
\begin{equation*}
\limsup_A \w = \max_{\xi \in \^A} \^\w(\xi).
\end{equation*}
Thus $A$ is nonvanishing iff it contains a nonvanishing end.
\end{enumerate}
\end{lemma}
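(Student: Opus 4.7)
The plan is to prove (a) directly from the two definitions, and then deduce (b) by combining (a) with a compactness argument on $\^A$.

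For part (a), one inequality is immediate: since $\limsup_A \w \le \sup_{x \in A} \w(x)$ for every boundary-finite $A \subseteq X$, taking the infimum over clopen neighborhoods $\^A \ni \xi$ on both sides gives $\inf_{\^A \ni \xi} \limsup_A \w \le \^\w(\xi)$. For the reverse, fix such an $\^A$ and any finite $F \subseteq A$. Because $F$ is finite, $A \setminus F$ is still boundary-finite, and since vertices are isolated in $\^X$, its closure $\widehat{A \setminus F}$ is still a clopen neighborhood of $\xi$. Hence by the very definition of $\^\w(\xi)$, we have $\^\w(\xi) \le \sup_{x \in A \setminus F} \w(x)$; taking the infimum over finite $F$ yields $\^\w(\xi) \le \limsup_A \w$, and then the infimum over $\^A$ gives the desired inequality. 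The ``thus'' clause is immediate, since if $\^\w(\xi) > 0$ then $\limsup_A \w \ge \^\w(\xi) > 0$ for every boundary-finite neighborhood $A$ of $\xi$.

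For part (b), the easy direction $\sup_{\xi \in \^A} \^\w(\xi) \le \limsup_A \w$ follows by applying the last sentence of (a) with the specific boundary-finite neighborhood $A$ itself. For the reverse direction, set $L := \limsup_A \w \in (0,\infty]$. By definition of $\limsup$, for each $\epsilon > 0$ the set $\{x \in A : \w(x) > L - \epsilon\}$ (or $\{x \in A : \w(x) > N\}$ in the case $L = \infty$) is infinite, so we can extract a sequence of pairwise distinct vertices $x_n \in A$ with $\w(x_n) \to L$. Since $A$ is contained in a single $G$-component $C$ and is boundary-finite, $\^A$ sits as a clopen subspace of the compact space $\^C$, so $\^A$ is itself compact and the sequence $(x_n)$ admits a cluster point $\xi \in \^A$. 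Because the $x_n$ are pairwise distinct and each vertex is isolated in $\^X$, this cluster point cannot be any vertex, so $\xi$ is an end. For any clopen neighborhood $\^B \ni \xi$ with $B \subseteq A$, infinitely many $x_n$ belong to $B$, so $\sup_{x \in B} \w(x) \ge L$; taking infimum over $B$ gives $\^\w(\xi) \ge L$, proving the inequality and showing the supremum in (b) is attained as a maximum. The ``thus'' clause is then immediate (noting that $\^A$ does contain ends by \cref{rmk:ends-inf} since $A$ is infinite).

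The main obstacle is the compactness step in (b): one needs the single-component hypothesis to guarantee that $\^A$ is compact (rather than merely locally compact, as \cref{def:ends-nonconn} points out in the disconnected case) and then to argue the cluster point is genuinely an end, not a vertex. The case $L = \infty$ is handled by replacing $L - \epsilon$ with arbitrary thresholds $N \nearrow \infty$, with no other change to the argument.
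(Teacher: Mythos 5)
Your proof is correct and matches the paper's argument essentially step for step: part~(a) via the trivial inequality $\limsup\le\sup$ and, in the reverse direction, by noting that $A\setminus F$ is still a boundary-finite clopen neighborhood of $\xi$ for any finite $F\subseteq A$; and part~(b) via the easy direction from~(a) together with compactness of $\^A$ (using the single-component hypothesis) and extraction of a sequence clustering at an end. One small slip in attribution: for the easy direction of~(b) you invoke ``the last sentence of (a),'' i.e.\ the qualitative nonvanishing clause, but what you actually need is the displayed identity $\^\w(\xi)=\inf_{\^A\ni\xi}\limsup_A\w$, which supplies the quantitative bound $\^\w(\xi)\le\limsup_A\w$ for the given~$\^A$.
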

\begin{proof}
\cref{thm:weights-nonvanish:end}
Clearly $\^\w(\xi) = \inf_{\^A \ni \xi} \sup_A \w \ge \inf_{\^A \ni \xi} \limsup_A \w$;
conversely, for each $\^A \ni \xi$, we have $\^\w(\xi) = \inf_{\^B \ni \xi} \sup_B \w \le \limsup_A \w$, since $A$ minus any finite set contains a neighborhood $B$ of $\xi$.

\cref{thm:weights-nonvanish:set}
First, note that by the compactness of $\^A$ (because $A$ is contained in a single $G$-component) and upper semicontinuity of $\^\w$, the maximum is achieved.
Now $\ge$ follows from \cref{thm:weights-nonvanish:end}.
To show $\le$: if $\limsup_A \w > 0$, then there is a sequence of distinct vertices $x_0, x_1, \dotsc \in A$ such that $\lim_{n -> \infty} \w(x_n) = \limsup_A \w$; a subsequence of these converges to some end $\xi \in \^A$ with $\^\w(\xi) \ge \limsup_A \w$.
\end{proof}

\begin{remark}
The converse of the last statement of \cref{thm:weights-nonvanish}\cref{thm:weights-nonvanish:end} is false, as shown by the example in \cref{fig:ends-nonvanish-fsigma}.
In fact, \cref{thm:weights-nonvanish}\cref{thm:weights-nonvanish:set} shows that every neighborhood of an end $\xi$ is nonvanishing iff $\xi$ belongs to the \emph{closure} $\-{\partial_\w X} \subseteq \partial X$ of the set of nonvanishing ends.
\end{remark}

\begin{definition}[cf.\ \cref{def:ends-furc}]
\label{def:weights-furc}
A \defn{$\w$-$n$-furcation} is a finite connected $F \subseteq X$ with at least $n$ nonvanishing sides $C_1, \dotsc, C_n \subseteq [F]_G \setminus F$.
A \defn{$\w$-$n$-furcation vertex} is a singleton $\w$-$n$-furcation.
When $n = 2, 3$, we say \defn{$\w$-bifurcation}, \defn{$\w$-trifurcation} respectively.
\end{definition}

\begin{lemma}[cf.\ \cref{thm:ends-furc}]
\label{thm:weights-furc}
For connected $G$, $\abs{\partial_\w X} \ge n$ iff there is at least one $\w$-$n$-furcation.
In that case, for any $n$ distinct $\w$-nonvanishing ends $u_1, \dotsc, u_n \in \partial_\w X$ and clopen neighborhoods $u_i \in \^A_i \subseteq \^X$, there is a $\w$-$n$-furcation $F$ and distinct sides $C_i \subseteq X \setminus F$ of it such that $u_i \in \^C_i \subseteq \^A_i$.
(In other words, the products of distinct sides of $\w$-$n$-furcations form a neighborhood basis for each pairwise distinct $(u_1, \dotsc, u_n) \in (\partial_\w X)^n$.)
\end{lemma}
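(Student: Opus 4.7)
The plan is to reduce both directions of \cref{thm:weights-furc} to the unweighted counterpart \cref{thm:ends-furc}, combined with the set-end correspondence \cref{thm:weights-nonvanish}. These two lemmas already supply, respectively, a purely combinatorial description of $n$-furcations and the precise link between $\w$-(non)vanishing of subsets and of ends, so essentially no new geometric input should be needed; the task is a formal transfer between the weighted and unweighted notions of furcation.

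For the forward direction, I would start from $n$ distinct $\w$-nonvanishing ends $u_1,\dotsc,u_n \in \partial_\w X$ together with prescribed clopen neighborhoods $\^A_i \ni u_i$. Applying \cref{thm:ends-furc} to these same ends (viewed merely as ends) yields an $n$-furcation $F \subseteq X$ and distinct sides $C_i \subseteq X \setminus F$ with $u_i \in \^C_i \subseteq \^A_i$. Each such $C_i$ is a boundary-finite subset of $X$ containing the $\w$-nonvanishing end $u_i$, so \cref{thm:weights-nonvanish}\cref{thm:weights-nonvanish:end} immediately forces $C_i$ itself to be $\w$-nonvanishing. Consequently $F$ is in fact a $\w$-$n$-furcation, and the described products of sides form the claimed neighborhood basis for pairwise distinct tuples in $(\partial_\w X)^n$.

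For the reverse direction, let $F$ be a $\w$-$n$-furcation witnessed by $\w$-nonvanishing sides $C_1,\dotsc,C_n$. Each $C_i$ is $G$-connected (being a side of $F$) and boundary-finite (its outer boundary sits inside the finite set $F$). Since any finite set is automatically $\w$-vanishing under the convention in \cref{def:weights_of_ends}, the $\w$-nonvanishing side $C_i$ must be infinite; and because $G$ is connected, $C_i$ lies in a single $G$-component. Then \cref{thm:weights-nonvanish}\cref{thm:weights-nonvanish:set} produces a $\w$-nonvanishing end $u_i \in \^C_i$. The clopen sets $\^C_i$ are pairwise disjoint (as the $C_i$ are distinct sides of $F$), so the ends $u_1,\dotsc,u_n$ are distinct, yielding $\abs{\partial_\w X} \ge n$.

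I expect no real obstacle: each implication is essentially one line once the cited lemmas are available. The only subtlety worth flagging while writing is the convention that $\limsup_A \w = 0$ for finite $A$, which is what makes any $\w$-nonvanishing side automatically infinite and thus allows the application of \cref{thm:weights-nonvanish}\cref{thm:weights-nonvanish:set} in the reverse direction.
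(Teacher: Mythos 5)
Your proof is correct and follows essentially the same route the paper intends: the paper's one-line proof reads ``Analogous to \cref{thm:ends-furc}, using \cref{thm:weights-nonvanish} in place of \cref{rmk:ends-inf},'' i.e., re-run the construction in \cref{thm:ends-furc} but replace ``infinite side contains an end'' by ``nonvanishing side contains a nonvanishing end.'' You achieve the same thing slightly more economically by invoking \cref{thm:ends-furc} as a black box in the forward direction and then upgrading the resulting $n$-furcation $F$ to a $\w$-$n$-furcation via \cref{thm:weights-nonvanish}\cref{thm:weights-nonvanish:end}, rather than re-deriving $F$ from scratch; the reverse direction via \cref{thm:weights-nonvanish}\cref{thm:weights-nonvanish:set}, including the observation that a $\w$-nonvanishing side is automatically infinite (so the hypothesis of \cref{thm:weights-nonvanish}\cref{thm:weights-nonvanish:set} applies), is exactly the intended substitution. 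No gaps.
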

\begin{proof}
Analogous to \cref{thm:ends-furc}, using \cref{thm:weights-nonvanish} in place of \cref{rmk:ends-inf}.
\end{proof}

The following results record the behavior of weights of ends upon passing to a subgraph or quotient graph (see e.g., \cref{def:quot-fin}):

\begin{lemma}
\label{thm:weights-map}
Let a finite-to-one homomorphism between connected locally finite graphs $f : (X,G) -> (Y,H)$ induce $\^f : \^X^G -> \^Y^H$ as in \cref{thm:ends-map-fin1hom}.
For a weight function $\w : X -> \#R^+$, put
\begin{align*}
\sup\nolimits_f \w : Y &--> \#R^+ \\
y &|--> \sup_{x \in f^{-1}(y)} \w(x).
\end{align*}
Then for an end $\zeta \in \partial^H Y$,
\begin{equation*}
\tag{$*$}
\^{\sup\nolimits_f \w}(\zeta) = \sup_{\xi \in \^f^{-1}(\zeta)} \^\w(\xi).
\end{equation*}
Thus for $\xi \in \partial^G X$,
\begin{equation*}
\^\w(\xi) \le \^{\sup\nolimits_f \w}(\^f(\xi)).
\end{equation*}
In particular, $\^f$ restricts to a map $\partial^G_\w X -> \partial^H_{\sup_f \w} Y$ between spaces of nonvanishing ends.
\end{lemma}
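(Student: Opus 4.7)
The identity $(*)$ carries all the content, and the two remaining conclusions fall out of it formally: specializing $\zeta := \^f(\xi)$ in $(*)$ gives the pointwise inequality $\^\w(\xi) \le \^{\sup\nolimits_f \w}(\^f(\xi))$ since $\xi$ itself lies in $\^f^{-1}(\zeta)$, and this inequality in turn upgrades $\^\w(\xi) > 0$ to $\^{\sup\nolimits_f \w}(\^f(\xi)) > 0$, yielding the restricted map between spaces of nonvanishing ends. So the plan is entirely about proving $(*)$.

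For the $(\le)$ direction of $(*)$, fix $\xi \in \^f^{-1}(\zeta)$ and a clopen neighborhood $\zeta \in \^B \subseteq \^Y^H$ with $B \subseteq Y$ boundary-finite. By the pulling-back-boundaries argument from the proof of \cref{thm:ends-map-fin1hom}, $f^{-1}(B) \subseteq X$ is also boundary-finite; continuity of $\^f$ together with density of $X$ in $\^X^G$ then gives $\^f^{-1}(\^B) = \^{f^{-1}(B)}$, so $\xi \in \^{f^{-1}(B)}$. From the defining infimum of $\^\w$,
\begin{equation*}
\^\w(\xi) \;\le\; \sup_{x \in f^{-1}(B)} \w(x) \;=\; \sup_{y \in B}\,(\sup\nolimits_f \w)(y),
\end{equation*}
and taking $\inf_{\^B \ni \zeta}$ gives $\^\w(\xi) \le \^{\sup\nolimits_f \w}(\zeta)$; the supremum over $\xi \in \^f^{-1}(\zeta)$ then finishes this inequality.

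For the $(\ge)$ direction, which is the substantive step, I would extract an explicit end witnessing the supremum via sequential compactness. Pick $y_n \in Y$ with $y_n \to \zeta$ and $(\sup\nolimits_f \w)(y_n) \to \^{\sup\nolimits_f \w}(\zeta)$ (possible because $\^Y^H$ is first countable, so the defining infimum is realized along a decreasing basis of clopen neighborhoods). Because $f$ is finite-to-one, the sup defining $(\sup\nolimits_f \w)(y_n)$ is attained on the finite set $f^{-1}(y_n)$, so choose $x_n \in f^{-1}(y_n)$ with $\w(x_n) = (\sup\nolimits_f \w)(y_n)$. Compactness of $\^X^G$ yields a subsequence $x_{n_k} \to \xi \in \^X^G$; continuity of $\^f$ forces $\^f(\xi) = \lim_k y_{n_k} = \zeta$, and $\zeta \in \partial^H Y$ then forces $\xi \in \partial^G X$ (otherwise $\^f(\xi) = f(\xi) \in Y$). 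Any clopen $\^A \ni \xi$ eventually contains $x_{n_k}$, so $\sup_A \w \ge \limsup_k \w(x_{n_k}) = \^{\sup\nolimits_f \w}(\zeta)$; taking $\inf_{\^A \ni \xi}$ yields $\^\w(\xi) \ge \^{\sup\nolimits_f \w}(\zeta)$, producing the desired witness. The only subtle point I anticipate is the identification $\^f^{-1}(\^B) = \^{f^{-1}(B)}$ used in the $(\le)$ step, which conflates the closure of $f^{-1}(B)$ inside $\^X^G$ with its intrinsic end compactification; this is handled by the unnumbered remark between \cref{rmk:ends-inf} and \cref{rmk:ends-basis-conn}. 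Everything else is a routine upper semicontinuity/compactness manipulation once the tools of \cref{sec:prelim-ends,sec:prelim-weights} are in hand.
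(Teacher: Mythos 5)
Your proof is correct, but it takes a genuinely different route from the paper's. The paper disposes of $(*)$ in a single sentence by characterizing both sides via a universal property: each side is the least upper semicontinuous map $\^Y^H \to [0,\infty]$ whose composition with $f \colon X \to Y \subseteq \^Y^H$ dominates $\w$ (for the left side this is essentially the definition of the minimal u.s.c.\ extension of $\sup_f\w$; for the right side one uses that the pushforward of a u.s.c.\ function under a continuous map between compact spaces is again u.s.c., plus the fact that $g\circ \^f$ is u.s.c.\ and dominates $\w$ on $X$ hence dominates $\^\w$ everywhere). You instead prove the two inequalities directly: $(\le)$ by pulling back a shrinking clopen neighborhood base of $\zeta$ through $f$ and using the boundary-finiteness of preimages, and $(\ge)$ by extracting an explicit witness end $\xi \in \^f^{-1}(\zeta)$ from a sequence $x_n$ realizing the fiberwise suprema, via sequential compactness of $\^X^G$. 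Both proofs use the same essential ingredients (compactness of $\^X^G$, upper semicontinuity, the boundary-finiteness transfer from Lemma \ref{thm:ends-map-fin1hom}), but the paper's packaging is more abstract and terse, while yours is more explicit and in particular produces the witnessing end rather than merely asserting its existence --- a concreteness that is arguably easier to check line by line. Both arguments (yours and the paper's) implicitly use that every $y\in Y$ has nonempty $f$-fiber for $\sup_f\w$ to land in $\#R^+$; this is an assumption built into the statement rather than a gap specific to your proof.
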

\begin{proof}
Both sides of ($*$) define the least upper semicontinuous map $\^Y^H -> [0,\infty]$ whose composite with $f : X -> Y \subseteq \^Y^H$ is $\ge \w$.
\end{proof}

\begin{corollary}
\label{thm:weights-map-nonconn}
Let a componentwise finite-to-one homomorphism between (possibly disconnected) locally finite graphs $f : (X,G) -> (Y,H)$ induce $\^f : \^X^G -> \^Y^H$ as in \cref{def:ends-nonconn}.
For weight functions $\w_G : X -> \#R^+$ and $\w_H : Y -> \#R^+$ such that
$\w_G \le \w_H \circ f$,
we have for each $\xi\in \partial^G X$,
\begin{equation*}
\^{\w_G}(\xi) \le \^{\w_H}(\^f(\xi)).
\end{equation*}
In particular, $\^f$ restricts to a map $\partial^G_{\w_G} X -> \partial^H_{\w_H} Y$ between spaces of nonvanishing ends.
\end{corollary}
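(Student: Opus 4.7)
The plan is to derive the inequality $\^{\w_G}(\xi) \le \^{\w_H}(\^f(\xi))$ directly from the infimum characterization of the upper semicontinuous extension in \cref{def:weights_of_ends}, combined with the continuity of $\^f$ established componentwise in \cref{thm:ends-map-fin1hom} and \cref{def:ends-nonconn}. The ``in particular'' clause will then be immediate from the definition of $\partial_\w$.

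Fix $\xi \in \partial^G X$, lying in some $G$-component $C \subseteq X$. Since $f$ is a graph homomorphism, $f(C)$ is contained in a single $H$-component $D$, and \cref{thm:ends-map-fin1hom} applied to the finite-to-one homomorphism $f|_C : (C, G|_C) \to (D, H|_D)$ gives that $\^f$ restricts to a continuous map $\^C^{G|_C} \to \^D^{H|_D}$. Set $\zeta := \^f(\xi)$. For every clopen neighborhood $\^B$ of $\zeta$ with $B \subseteq D$ boundary-finite, continuity yields that $(\^f)^{-1}(\^B) \cap \^C^{G|_C}$ is a clopen neighborhood of $\xi$; since the clopens $\^A$ for boundary-finite $A \subseteq C$ form an open basis of $\^C^{G|_C}$ (\cref{def:ends-space}), we may choose such an $\^A$ with $\xi \in \^A \subseteq (\^f)^{-1}(\^B)$, so that $A = \^A \cap X \subseteq f^{-1}(B)$. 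The hypothesis $\w_G \le \w_H \circ f$ then yields
\[
\sup_{x \in A} \w_G(x) \le \sup_{x \in A} \w_H(f(x)) \le \sup_{y \in B} \w_H(y),
\]
and taking the infimum first over such $\^A$'s and then over $\^B$'s gives $\^{\w_G}(\xi) \le \^{\w_H}(\zeta)$. The restriction statement then follows at once: if $\^{\w_G}(\xi) > 0$, then $\^{\w_H}(\^f(\xi)) > 0$, so $\^f$ sends $\partial^G_{\w_G} X$ into $\partial^H_{\w_H} Y$.

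The main task here is really just bookkeeping, namely ensuring that each topological operation (clopen basis, preimage, upper semicontinuous extension) is interpreted on the correct component rather than in some larger ambient space. As an alternative route, one could instead invoke \cref{thm:weights-map} componentwise with the auxiliary weight $\sup_{f|_C} \w_G|_C$ on $f(C)$ (extended by $0$ on $D \setminus f(C)$), which by hypothesis satisfies $\sup_{f|_C} \w_G|_C \le \w_H|_D$ pointwise, and then appeal to the fact that the upper semicontinuous extension is monotone in its input. Either approach reduces the proof to a routine unfolding of definitions, with no genuine obstacle.
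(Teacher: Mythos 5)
Your main argument is correct and is a genuinely more direct route than the paper's. The paper reduces componentwise and invokes \cref{thm:weights-map} applied to $f|C : C \to D$ (noting $\sup_{f|C}(\w_G|C) \le \w_H|D$), which is essentially what you sketch as the ``alternative route'' at the end of your proposal. Your primary argument instead unwinds the definitions from scratch: for each clopen $\^B \ni \zeta := \^f(\xi)$, continuity of $\^f|_{\^C}$ produces a clopen $\^A \ni \xi$ with $A \subseteq f^{-1}(B)$, whence $\sup_A \w_G \le \sup_B \w_H$ by the hypothesis $\w_G \le \w_H \circ f$, and taking infima over $\^B$ gives the inequality. This buys you independence from the auxiliary weight $\sup_f \w$ (and thus from the minor issue of $\sup_{f|C} \w_G|_C$ not strictly being $\#R^+$-valued where $f|C$ fails to be surjective onto $D$, which you correctly flag by extending by $0$); the cost is a bit more explicit bookkeeping with clopen bases, which you handle carefully. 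Both approaches are sound and the difference is a matter of taste, but it is worth noting that \cref{thm:weights-map} does more work in the paper (its equality $(*)$ is used elsewhere, e.g.\ in \cref{def:quot-fin}), so the paper's corollary-as-corollary structure is natural there, whereas your direct proof would be preferable if one only wanted the inequality.
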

\begin{proof}
For each component $C \in X/G$ mapping into $D := [f(C)]_H \in Y/H$, since
$\w_G \le \w_H \circ f$,
we have
$\sup_{f|C}(\w_G|C) \le \w_H|D$,
so we may apply \cref{thm:weights-map} to $f|C : C -> D$.
\end{proof}

In light of \cref{thm:ends-map-fin1hom,thm:weights-map-nonconn}, we use the following notion:

\begin{definition}\label{def:canonical_end-map}
Let $(Y,H)$ be a graph equipped with a weight function $\w$. For a subgraph $(X,G)$ of $(Y,H)$, the \defn{canonical maps} $\partial^G X -> \partial^H Y$ and $\partial^G_\w X -> \partial^H_\w Y$ are the restrictions of the map $\^\iota : \^X^G -> \^Y^H$ induced by the inclusion $\iota : X \to Y$. We also refer to the $\iota$-images of ends of $(X,G)$ as \textbf{canonical images}.
\end{definition}

\begin{remark}
\label{rmk:weights-homog}
It will be important below that all of the notions considered in this subsection are \defn{homogeneous in $\w$}, meaning preserved under scaling $\w$ by any constant in $\#R^+$.
\end{remark}

\subsection{Relative weight functions (cocycles) on graphs}\label{sec:cocycle}

In the sequel, we use the notion of $\w$-vanishing sets and ends for graphs equipped with a \emph{relative} weight function $\w$, which we now define. Let $(X,G)$ be a locally finite possibly disconnected graph.

\begin{definition}
\label{def:relative_weight}
An \defn{$\#R^+$-valued cocycle} or a \textbf{relative weight function} on $G$ is a map $\w : G -> \#R^+$ satisfying the \defn{cocycle identity}
\begin{equation*}
\w(x_0,x_1) \w(x_1,x_2) \dotsm \w(x_{n-1},x_n) = 1 \quad \text{for any cycle $x_0 \mathrel{G} x_1 \mathrel{G} \dotsb \mathrel{G} x_n = x_0$}.
\end{equation*}
Such $\w$ then extends uniquely to a cocycle on the induced equivalence relation $\#E_G$, which we also denote $\w$, namely $\w(x,y) := \w(x_0,x_1) \dotsm \w(x_{n-1},x_n)$ for any path $x = x_0 \mathrel{G} x_1 \mathrel{G} \dotsb \mathrel{G} x_n = y$.
\end{definition}

For vertices $x,y$ in the same component of $G$, we think of $\w^y(x) := \w(x,y)$ as the weight of $x$ relative to $y$. Indeed, the map 
\begin{equation*}
\w^y := \w(-,y) : [y]_G --> \#R^+
\end{equation*}
is simply a weight function on the $G$-component of $y$, and these weight functions $\w^y$ and $\w^z$ for different basepoints $y,z$ in the same $G$-component are constant multiples of each other by the cocycle identity $\w^z = \w^z(y) \w^y$. Because of this, for a fixed $G$-component $C$, \defn{homogeneous} statements about $\w^b$ do not depend on the choice of the basepoint $b \in C$; for example:
\begin{itemize}
\item the definitions of $\w^b$-finite, $\w^b$-vanishing, $\w^b$-$n$-furcation for sets and ends in $C$,

\item $\w^b(x) < \w^b(y)$ for $x,y \in C$,

\item $\min \{\w^b(x), \w^b(y)\} \le \min \{\w^b(u), \w^b(v)\}$ for $x,y,u,v \in C$.
\end{itemize}
We drop $b$ from the superscript in such ($\w$-homogeneous) statements and simply write $\w$, e.g.\ $\w$-nonvanishing. In particular, per \cref{rmk:weights-homog}, we may use the notions and statements of \cref{sec:prelim-weights} for a \emph{relative} weight function $\w$ on $G$.

%%%%%%%%%%%%%%%%%%%%%%%%%%%%%%%%%%%%
\subsection{Borel and quasi-pmp graphs and equivalence relations}\label{sec: CBER and quasi-pmp graphs}

Let $(X,G)$ be a locally finite Borel graph, i.e., the vertex set $X$ is a standard Borel space, and $G \subseteq X^2$ is Borel as a set of pairs.

\begin{remark}
\label{rmk:borel-nonconn}
In general, notions of \defn{end space}, etc., for $(X,G)$ are to be understood in the general sense of disconnected locally finite graphs, as in \cref{def:ends-nonconn}.
Thus for example, $\^X^G$ is typically a nonseparable locally compact Hausdorff space.
Note that the topology on $\^X^G$ has nothing to do with any compatible Polish topology on $X$.
\end{remark}

\begin{definition}
\label{def:qpmp}
Let $\mu$ be a probability measure on $X$.

We say that $\mu$ is \defn{($G$-)quasi-invariant} (or that $G$ is a \defn{quasi-pmp} graph) if for every Borel $\mu$-null $A \subseteq X$, $[A]_G$ is still $\mu$-null.

For a Borel cocycle $\w : G -> \#R^+$, we say that $\mu$ is \defn{$\w$-invariant} if for any Borel sets $A, B \subseteq X$ and Borel bijection $\gamma : A \cong B$ with graph contained in $G$ (i.e., perfect $G$-matching between $A,B$),
\begin{equation*}
\mu(B) = \int_A \w^x(\gamma(x)) \,d\mu(x).
\end{equation*}
It follows that the same holds for $\gamma$ with graph contained merely in $\#E_G$.

In fact, it is enough to require this equation only for countably many Borel bijections $\gamma$ whose graphs cover $G$.
For instance, if $G$ is the Schreier graph of a Borel action of a countable group $\Gamma \curvearrowright X$, then it is enough to require this for $\gamma$ among the generators of $\Gamma$.
See \cite[8.1, 2.1]{KMtopics}.

If $\mu$ is $\w$-invariant, then it is clearly quasi-invariant.
Conversely, every quasi-invariant $\mu$ is $\w$-invariant for an essentially unique (mod $\mu$-null) Borel cocycle $\w : G -> \#R^+$, called the \defn{Radon--Nikodym cocycle} of $\#E_G$ with respect to $\mu$; see \cite[8.3]{KMtopics}.
\end{definition}

A \defn{countable Borel equivalence relation}  (CBER) $E \subseteq X^2$ is a Borel equivalence relation with countable classes; see \cite{Kcber} for general background.
These are exactly the connectedness relations $\#E_G$ of locally finite Borel graphs $G$ \cite[remark after proof of 3.12]{JKLcber}.

A CBER $E$ on $X$ is called
\begin{itemize}
\item \defn{smooth} if it has a \defn{Borel transversal} $A \subseteq X$, meaning a Borel set containing exactly one element from each $E$-class.

\item \defn{hyperfinite} if it is an increasing union of finite Borel equivalence relations.

\item \defn{amenable} if there is a sequence of Borel functions $\lambda_n: E\to[0,1]$ that are summable to $1$ on each equivalence class and for all $(x,y)\in E$ we have that $\norm{\lambda_n(x,\cdot)-\lambda_n(y,\cdot)}_1\to 0$ as $n$ tends to infinity.

\item \defn{treeable} if it admits an acyclic graphing, where a \textbf{graphing} of $E$ is a Borel graph $G$ on $X$ whose connectedness relation $\#E_G$ is $E$.
\end{itemize}
In the presence of a Borel probability measure $\mu$ on $X$, the notions of hyperfinite, amenable, and treeable are relaxed to $\mu$-hyperfinite, $\mu$-amenable, and $\mu$-treeable by demanding that the the corresponding property holds off of a $\mu$-null set.
We also use the notions of $\mu$-amenability and $\mu$-hyperfiniteness interchangeably because they are equivalent by the Connes--Feldman--Weiss theorem \cite{Connes-Feldman-Weiss}. 
Finally, we often omit $\mu$ before these terms when it is clear from the context.

\begin{lemma}
\label{thm:cber-compress}
If $(X,E)$ is a smooth countable Borel equivalence relation, $\w : E -> \#R^+$ is a Borel cocycle, and each $E$-class is $\w$-infinite, then there are no $\w$-invariant probability measures.
\end{lemma}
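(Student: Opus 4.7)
The plan is to derive a contradiction by applying the tilted mass transport principle to the indicator of a Borel transversal.

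First I would fix a Borel transversal $A \subseteq X$, which is available by smoothness. Since $[A]_E = X$, and $\w$-invariance of $\mu$ implies quasi-invariance, we must have $\mu(A) > 0$ (otherwise $\mu(X) = \mu([A]_E) = 0$, contradicting $\mu(X)=1$). Next I would apply the mass transport identity
\[
\int_X \sum_{y \in [x]_E} f(x,y)\, d\mu(x) \;=\; \int_X \sum_{x \in [y]_E} f(x,y)\, \w^y(x)\, d\mu(y)
\]
to the test function $f(x,y) := \1_A(y)$. Since each $E$-class meets $A$ in exactly one point, the inner sum on the left is identically $1$, and the LHS equals $\mu(X) = 1$. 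On the right, the indicator restricts integration to $y \in A$, and the inner sum becomes the total $\w^y$-weight of the class $[y]_E$, which is $+\infty$ by the hypothesis that every class is $\w$-infinite. Combined with $\mu(A) > 0$, this forces the RHS to equal $+\infty$, and the equation $1 = +\infty$ is the desired contradiction.

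The only step not made fully formal in the excerpt is the mass transport identity itself, which is mentioned in the introduction as a consequence of $\w$-invariance. To justify it rigorously, I would invoke the Lusin--Novikov uniformization theorem to write $E$ as a countable disjoint union of graphs of Borel partial bijections $\gamma_n : A_n \to B_n$, apply the defining equation of $\w$-invariance from \cref{def:qpmp} to each $\gamma_n$ against the nonnegative function $f(\cdot,\gamma_n(\cdot))$, and sum over $n$ using Tonelli's theorem to interchange sum and integral. I do not foresee any genuine obstacle here; the whole proof is essentially a one-line mass transport computation once the transversal has been fixed, with the smoothness hypothesis used solely to produce that transversal.
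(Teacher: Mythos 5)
Your proposal is correct and is essentially the paper's own proof, just phrased through the mass-transport identity rather than written out directly: the paper also fixes a Borel transversal $A$, applies Lusin--Novikov to decompose $E$ into Borel bijections, and uses $\w$-invariance to reduce $\mu(X)$ to $\int_A \sum_{y \in [x]_E} \w^x(y)\,d\mu(x) = \int_A \infty\,d\mu$, which is exactly the computation you get by plugging $f(x,y) = \1_A(y)$ into mass transport. (One small remark: you do not actually need the observation $\mu(A) > 0$, since if $\mu(A) = 0$ the right-hand side is $0$ and the resulting identity $1 = 0$ is already absurd.)
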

This fact is well-known; see \cite[2.1]{Mnad}, \cite[5.6]{Ts:hyperfinite_ergodic_subgraph}.
For the reader's convenience, we include the easy proof.
\begin{proof}
Let $A \subseteq X$ be a Borel transversal.
By Lusin--Novikov uniformization \cite[18.10]{Kcdst}, there are Borel maps $\gamma_0, \gamma_1, \dotsc : A -> X$ such that for each $x \in A$, $(\gamma_i(x))_i$ is an injective enumeration of $[x]_E$.
Then for any $\w$-invariant $\mu$,
\begin{align*}
\mu(X)
&= \sum_i \mu(\gamma_i(A))
= \sum_i \int_A \w^x(\gamma_i(x)) \,d\mu(x)
= \int_A \sum_{y \in [x]_E} \w^x(y) \,d\mu(x)
= \int_A \infty \,d\mu(x).
\qedhere
\end{align*}
\end{proof}

%%%%%%%%%%%%%%%%%%%%%%%%
% Maximal subforests
%%%%%%%%%%%%%%%%%%%%%%%%
\section{$\w$-maximal subforests}\label{sec:MF}

In this section, we present our main cycle-cutting algorithm mentioned in \cref{cycle-cutting_algorithm}. 
We do so in several stages, starting with a connected locally finite graph equipped with a relative weight function and building our way up to locally finite quasi-pmp graphs.

%%%%%%%%%%%%%%%%%%%%%%%%%%%%%%%%%%%%
\subsection{For a connected graph with enough trifurcation vertices}\label{subsec:with_trifurc-vertices}

Throughout this subsection, we let $(X,G)$ be a connected locally finite graph with a relative weight function $\w : G -> \#R^+$. 
Fixing a basepoint $b \in X$, we get a genuine weight function $\w^b$ on $X$, which we use below, omitting the superscript $b$ from $\w$-homogeneous statements as they do not depend on the choice of the basepoint $b$. 
Especially in this subsection, the reader can think of $\w$ as a weight function on $X$ without any harm.

\begin{definition}
\label{def:w-order}
We extend the weight function $\w^b$ from $X$ to (the edge-set of) $G$ by setting 
\[
\~\w^b(e) := \min\{\w^b(x),\w^b(y)\}
\]
for an edge $e=\{x,y\} \in G$. 
Fix also an arbitrary linear ordering $\ledge$ on the \emph{undirected} $G$-edges.
Define a new linear ordering on the undirected $G$-edges as follows: for $e_1,e_2 \in G$,
\begin{equation*}
e_1 \lw e_2
\coloniff
\~\w(e_1) < \~\w(e_2)
\OR \big[\~\w(e_1) = \~\w(e_2) \AND e_1 \ledge e_2\big].
\end{equation*}
We emphasize that the definition of $\lw$ does not depend on the basepoint $b$.
\end{definition}

\begin{definition}
\label{def:mf}
Let $\ledge$ be a linear ordering on the set of edges of $G$, and let $H \subseteq G$ be an acyclic subgraph.
The \defn{$\w$-maximal subforest of $G$} is the subforest $H \subseteq M \subseteq G$ obtained from $G$ by deleting the $\lw$-least edge not in $H$ from each simple cycle.

This construction depends on $H$, which we refer to as the \defn{fixed subforest}, and $<$, the \defn{tiebreaker ordering}; however, $H$ and $<$ will usually be fixed and not mentioned explicitly in the remainder of this section.
The reader can take $H$ to be empty for all of the paper, except for \cref{sec:MF-borel}, where $H$ ensures relative ergodicity of the subforest $M$.
\end{definition}

All of our analysis of this subforest will be based on an abstract property it obeys, \cref{thm:mf-hyp} below, which relates the subforest to the following notion:

\begin{definition}
A subset $Y \subseteq X$ is \defn{($G$-)cycle-invariant} if whenever it contains an edge in a simple $G$-cycle, it also contains the entire cycle.

For example, for any bifurcation vertex $x \in X$ and side $C \subseteq X \setminus \{x\}$ of $x$ (cf.\ \cref{def:ends-furc}), the subset $C \cup \{x\}$ is cycle-invariant. (This also trivially holds for non-bifurcation vertices.)
\end{definition}

\begin{lemma}
\label{thm:mf-hyp}
For any $G$-connected cycle-invariant $Y \subseteq X$, if the $\w$-maximal subforest $M$ is such that $M|Y$ is disconnected, then every $M|Y$-component is $\w$-nonvanishing.
In particular, if $Y$ is $\w$-nonvanishing, then so is every $M|Y$-component.
\end{lemma}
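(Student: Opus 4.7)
The plan is to argue by contradiction. Suppose $M|Y$ is disconnected, fix an $M|Y$-component $Z$, and assume toward a contradiction that $Z$ is $\w$-vanishing. Let $B$ denote the set of $G$-edges with one endpoint in $Z$ and the other in $Y\setminus Z$; note every $e \in B$ lies in $G|Y$. The goal is to exhibit a $\lw$-maximum element $e^* \in B$ and show it must belong to $M$, contradicting that $Z$ is a full $M|Y$-component.

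First, $B$ is nonempty because $Y$ is $G$-connected and $\emptyset \ne Z \subsetneq Y$. Next, I claim that $B$ admits a $\lw$-maximum. For any $\epsilon>0$, an edge $e = \{z,y\} \in B$ with $\w(e) = \min\{\w(z),\w(y)\} \ge \epsilon$ satisfies $\w(z) \ge \epsilon$; the $\w$-vanishing hypothesis then gives only finitely many such $z \in Z$, and local finiteness of $G$ bounds the number of edges at each. Consequently $\{e \in B : \w(e) \ge \epsilon\}$ is finite for every $\epsilon > 0$, so $\sup_{e \in B} \w(e)$ is finite and attained, and the $\ledge$-maximum among the finitely many $\w$-maximizers gives the desired $\lw$-maximum $e^*$.

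To derive the contradiction, I rule out both possibilities for $e^*$. Both endpoints of $e^*$ lie in $Y$ and it joins $Z$ to $Y\setminus Z$, so having $e^* \in M$ would place $e^*$ inside $M|Y$ as a bridge between $Z$ and $Y\setminus Z$, contradicting that $Z$ is a full $M|Y$-component. Otherwise $e^* \notin M$, hence $e^* \notin H$, and by the definition of the $\w$-maximal subforest $e^*$ is the $\lw$-least non-$H$-edge of some simple $G$-cycle $c$; cycle-invariance of $Y$ forces $c \subseteq Y$. Traversing $c$ from one endpoint of $e^*$ to the other while avoiding $e^*$, the walk starts in $Y \setminus Z$ and ends in $Z$, so it must re-cross the partition, yielding another edge $e' \in c \cap B$ distinct from $e^*$. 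The same component argument shows $e' \notin M$, hence $e' \notin H$, so $e^* \lw e'$ by minimality of $e^*$ among non-$H$-edges of $c$, contradicting the $\lw$-maximality of $e^*$ in $B$. The ``in particular'' clause follows at once: if $M|Y$ is connected, then its unique component is $Y$ itself, and the claim is immediate.

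The one delicate point is the existence of $e^*$; it crucially uses that the edge weight $\w(e)$ is the \emph{minimum} of the two endpoint weights, so $\w$-vanishing on $Z$ alone (without any control over $Y\setminus Z$) already suffices to dominate all edge weights in $B$.
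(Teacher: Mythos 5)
Your proof is correct and rests on exactly the same key observation as the paper's: a deleted boundary edge $e$ of a component is the $\lw$-least non-$H$ edge of a simple cycle, which by cycle-invariance lies in $Y$ and must cross the boundary again at a non-$H$ edge that is strictly $\gw e$. The paper iterates this to build an infinite $\lw$-increasing sequence of boundary edges (then extracts pairwise disjoint ones to get infinitely many heavy inner-boundary vertices), whereas you take the contrapositive: assuming $Z$ is $\w$-vanishing, the edge-weight $=$ min-of-endpoints convention plus local finiteness force a $\lw$-maximum $e^*$ in the boundary, and the same cycle-crossing step then contradicts maximality. So this is essentially the same argument packaged a bit more cleanly, avoiding the explicit sequence construction.
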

\begin{proof}
Note the following key property of the $\w$-maximal subforest construction: if we restrict both $G$ and $H$ to a $G$-cycle-invariant set $Y$ (keeping the same relative weights $\w$ and tiebreaker $\ledge$), the maximal subforest we obtain is $M | Y$.
Thus we may assume that $Y = X$.

We will show that if $M$ is disconnected, then the \emph{$G$-edge boundary of every $M$-component $C$ is $\w$-nonvanishing}.
This will imply that $C$ is itself $\w$-nonvanishing, by local finiteness and our definition of the weight of an edge as the \emph{minimum} of the weights of the incident vertices.

Since $M$ is disconnected, there is an edge $e$ in $G \setminus M$ between $C$ and another $M$-component.
For any such edge $e$, since $e$ was deleted in $M$, it is the $\lw$-least edge not in $H$ in a simple $G$-cycle, which must thus contain another edge between $C$ and another $M$-component, which is $\gw e$ and also not in $H$.
Hence, there is a strictly $\lw$-increasing sequence $e_0 \lw e_1 \lw \dotsb$ on the $G$-edge boundary of $C$.
Passing to a subsequence (using local finiteness), we may assume these edges are pairwise disjoint (nonadjacent).
Then the endpoints of these edges in $C$ are infinitely many vertices $x_0, x_1, \dotsc$ on the inner boundary of $C$ with $\w(x_i) \ge \~\w(e_i) \ge \~\w(e_0)$, where the first inequality is again due to the weight of $e_i$ is defined to be the minimum of that of its endpoints.
Whence $C$ is $\w$-nonvanishing.
\end{proof}

\begin{observation}\label{thm:increasing_seq_in_the_cut}
Note that the above proof of \cref{thm:mf-hyp} exhibits something stronger, namely if our cycle-cutting algorithm (\cref{def:mf}) disconnects a cycle-invariant $G$-connected set $Y$ then there is a $\lw$-increasing sequence of pairwise disjoint edges on the boundary of each $M|Y$-component.
\end{observation}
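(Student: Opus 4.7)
The plan is to observe that the proof of \cref{thm:mf-hyp} already builds the desired sequence, so a proof of the observation essentially amounts to reading off what is done there and collecting the extra information. First I would reduce to the case $Y = X$ exactly as in that proof: restricting $G$ and $H$ to a $G$-cycle-invariant set preserves the ordering $\lw$ and produces $M|Y$ as the $\w$-maximal subforest of $G|Y$ (fixing $H|Y$, with tiebreaker induced by $\ledge$).

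Next, fix an $M$-component $C$. Since $M$ is disconnected, I can pick an edge $e_0 \in G \setminus M$ on the $G$-edge boundary of $C$. By definition of the cycle-cutting algorithm, $e_0$ is the $\lw$-least non-$H$ edge in some simple $G$-cycle $\sigma_0$. Because $\sigma_0$ is closed and $e_0$ crosses from $C$ to another $M$-component, $\sigma_0$ must contain at least one further edge $e_1$ on the $G$-edge boundary of $C$. This $e_1$ lies in $G \setminus M$ (otherwise it would merge two $M$-components into one), hence $e_1 \notin H$, and the $\lw$-minimality of $e_0$ inside $\sigma_0$ forces $e_0 \lw e_1$. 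Iterating this argument with $e_1$ in place of $e_0$ produces a strictly $\lw$-increasing sequence
\begin{equation*}
e_0 \lw e_1 \lw e_2 \lw \dotsb
\end{equation*}
of edges on the $G$-edge boundary of $C$, all lying in $G \setminus M$ and hence in particular outside $H$.

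The final step is to extract a pairwise disjoint subsequence, which is routine given local finiteness of $G$: each $e_i$ is incident to only finitely many $G$-edges, so I can inductively choose indices $i_0 < i_1 < \dotsb$ so that $e_{i_{k+1}}$ shares no endpoint with any of $e_{i_0}, \dotsc, e_{i_k}$. I do not expect a genuine obstacle here, since every ingredient already appears in the proof of \cref{thm:mf-hyp}; the observation is simply a quantitative reading of that argument, emphasizing that the increasing sequence and its pairwise-disjoint refinement live on the boundary of a single $M|Y$-component.
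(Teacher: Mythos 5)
Your proposal is correct and follows essentially the same route as the paper: reduce to $Y = X$ by the cycle-invariance observation, note that every boundary edge of an $M$-component is deleted and hence $\lw$-minimal among non-$H$ edges of some simple cycle, observe that this cycle must cross the boundary again (producing a strictly $\lw$-larger deleted boundary edge), iterate, and thin to a pairwise-disjoint subsequence by local finiteness. You add a couple of small explicit justifications (that the second crossing edge lies in $G \setminus M$ because otherwise $C$ would not be a maximal $M$-connected set, and hence is outside $H \subseteq M$) that the paper leaves implicit, but there is no substantive difference.
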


In the rest of this subsection, we will prove various combinatorial properties of the $\w$-maximal subforest $M$; these proofs will only make use of \cref{thm:mf-hyp}, and not any other specific features of our construction.
We therefore make the following:

\begin{hypothesis}
Let $M \subseteq G$ be \emph{any} subforest for which \cref{thm:mf-hyp} holds.
\end{hypothesis}

One benefit of isolating this abstract property is:

\begin{observation}
\label{thm:mf-hyp-reweight}
If \cref{thm:mf-hyp} holds for $M$, then it continues to hold if we replace $\w$ by a different relative weight function $\w'$ such that every $\w$-nonvanishing subset is also $\w'$-nonvanishing.
In particular, we may take $\w' \equiv 1$, yielding that the following results also hold for unweighted ends.
\end{observation}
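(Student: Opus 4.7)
The plan is to dispatch the observation in two pieces: first transferring the conclusion of \cref{thm:mf-hyp} from $\w$ to $\~\w$, and then verifying the special case $\~\w \equiv 1$.

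For the first piece, I would simply chain the two nonvanishing properties. Fix any $G$-connected cycle-invariant $Y \subseteq X$ such that $M|Y$ is disconnected. Applying the hypothesis on $M$ with respect to $\w$ yields that every $M|Y$-component is $\w$-nonvanishing; the assumption on $\~\w$ then upgrades each such component to $\~\w$-nonvanishing, which is exactly the conclusion of \cref{thm:mf-hyp} with $\w$ replaced by $\~\w$.

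For the ``in particular'' clause, I would verify that the constant cocycle $\~\w \equiv 1$ (trivially satisfying the cocycle identity) meets the dominance condition. The only nontrivial point is that any $\w$-nonvanishing $A \subseteq X$ is necessarily infinite: if $A$ were finite, then taking $F = A$ in the $\limsup$ from \cref{def:weights_of_ends} yields $0$, contradicting $\limsup_A \w > 0$. Any infinite $A$ then trivially satisfies $\limsup_A 1 = 1 > 0$, hence is $\~\w$-nonvanishing. Combined with the first piece, every combinatorial consequence derived purely from \cref{thm:mf-hyp} therefore applies verbatim to unweighted ends.

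There is no real obstacle here --- the whole point of isolating \cref{thm:mf-hyp} as an abstract hypothesis is that its conclusion is a set-level nonvanishing predicate, manifestly monotone under passing to a relative weight function on which fewer sets vanish.
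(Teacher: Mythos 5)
Your proof is correct, and since the paper states this as an Observation with no accompanying proof, there is nothing to compare against — the monotonicity argument you give (the conclusion of \cref{thm:mf-hyp} is a nonvanishing predicate on $M|Y$-components, which upgrades from $\w$ to $\~\w$ under the dominance hypothesis) is exactly the intended content. Your verification that $\~\w \equiv 1$ meets the dominance condition, by noting that $\w$-nonvanishing sets are necessarily infinite and infinite sets are $1$-nonvanishing, is also the right and complete check; the ``in particular'' clause of \cref{thm:mf-hyp} then carries over as well, since for disconnected $M|Y$ it follows from your first piece, and for connected $M|Y$ it is trivial.
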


\begin{lemma}
\label{thm:mf-3ends}
If $(X,G)$ has a $\w$-trifurcation vertex $x$, then the $M$-component of $x$ has at least $3$ $\w$-nonvanishing $M$-ends.
\end{lemma}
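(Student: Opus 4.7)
The plan is to produce one $\w$-nonvanishing $M$-end of the $M$-component $T_x$ of $x$ for each of the three $\w$-nonvanishing sides of $x$. Let $C_1, C_2, C_3 \subseteq X \setminus \{x\}$ be three $\w$-nonvanishing sides, and set $Y_i := C_i \cup \{x\}$. First I would verify that each $Y_i$ is $G$-connected and cycle-invariant: connectedness holds because $C_i$ is a $G$-component of $X \setminus \{x\}$ that is $G$-adjacent to $x$; cycle-invariance holds because any simple $G$-cycle avoiding $x$ lies in a single side of $x$, while one through $x$ enters and exits $x$ via two neighbors that, after deletion of $x$, must lie on a common arc in a single side.

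Since $Y_i$ is $G$-connected, cycle-invariant, and $\w$-nonvanishing (it contains $C_i$), \cref{thm:mf-hyp} guarantees that the $M|Y_i$-component $T_i$ of $x$ is $\w$-nonvanishing. Because $T_i \subseteq Y_i = C_i \cup \{x\}$ and is $M$-connected, it is a subtree of $T_x$, and the three $T_i$ meet only at $x$. Next, removing $x$ from the locally finite tree $T_i$ splits it into the finitely many subtrees attached to the $M$-neighbors of $x$ in $C_i$; since finite sets are $\w$-vanishing, at least one such subtree $T_{x,y_i}$ must itself be $\w$-nonvanishing. Inside the tree $T_x$, the subset $T_{x,y_i}$ has $M$-edge boundary the single edge $\{x y_i\}$, hence is boundary-finite; so \cref{thm:weights-nonvanish}\cref{thm:weights-nonvanish:set} applied to $T_x$ with its inherited weight function produces a $\w$-nonvanishing $M$-end $\eta_i$ of $T_x$ lying in the clopen neighborhood $\^{T_{x,y_i}} \subseteq \^{T_x}$.

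Finally, since $T_{x,y_1}, T_{x,y_2}, T_{x,y_3}$ are pairwise disjoint (each contained in a distinct side $C_i$) and each is boundary-finite in $T_x$, their clopen closures $\^{T_{x,y_i}}$ in the end compactification of $T_x$ are pairwise disjoint. Hence $\eta_1, \eta_2, \eta_3$ are three distinct $\w$-nonvanishing $M$-ends of the $M$-component $T_x$, as required. I expect the only genuinely delicate point to be the cycle-invariance check for each $Y_i$; everything else is a direct application of the abstract property \cref{thm:mf-hyp} combined with the set/end duality of \cref{thm:weights-nonvanish}\cref{thm:weights-nonvanish:set}.
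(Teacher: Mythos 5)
Your proof is correct and follows essentially the same route as the paper: both apply \cref{thm:mf-hyp} to $C_i \cup \{x\}$ for each nonvanishing side $C_i$ to get a nonvanishing $M$-piece on each side of $x$. The only difference is in the endgame: the paper concludes that $x$ is a $\w$-trifurcation vertex of its $M$-component and cites \cref{thm:weights-furc}, whereas you unpack this (correctly) by locating a specific nonvanishing boundary-finite subtree on each side and invoking \cref{thm:weights-nonvanish}\cref{thm:weights-nonvanish:set}; you also re-derive the cycle-invariance of $C_i \cup \{x\}$, which the paper notes as an example immediately after introducing the notion.
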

\begin{proof}
For each of the at least $3$ $\w$-nonvanishing sides $C \subseteq X \setminus \{x\}$ of $x$, we have a $G$-connected cycle-invariant set $C \cup \{x\} \subseteq X$, whence the $M|(C \cup \{x\})$-component of $x$ is $\w$-nonvanishing by \cref{thm:mf-hyp}, whence $x$ is a $\w$-trifurcation in its $M$-component, which therefore has at least $3$ $\w$-nonvanishing ends (by \cref{thm:weights-furc}).
\end{proof}

%Let now $\^\iota : \^X^M -> \^X^G$ be the map between end compactifications induced by the subgraph inclusion $\iota : (X,M) -> (X,G)$ (\cref{def:ends-nonconn}), which restricts to $\^\iota : \partial^M_\w X -> \partial^G_\w X$ (\cref{thm:weights-map-nonconn,def:canonical_end-map}).

\begin{lemma}
\label{thm:mf-dense}
Suppose every $\w$-nonvanishing boundary-finite $A \subseteq X$ containing a $\w$-bifurcation (of $(X,G)$) also contains a $\w$-bifurcation vertex (of $(X,G)$).
Then the canonical map $\partial^M_\w X -> \partial^G_\w X$ (\cref{def:canonical_end-map}) has dense image.
\end{lemma}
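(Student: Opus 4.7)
The plan is to show that every basic clopen neighborhood $\^A$ of every $\xi \in \partial^G_\w X$ — with $A$ a $G$-connected, $G$-boundary-finite subset of $X$, hence $\w$-nonvanishing by \cref{thm:weights-nonvanish}\cref{thm:weights-nonvanish:set} — contains the canonical image of some $\w$-nonvanishing $M$-end. The strategy is to reduce $A$ to a cycle-invariant basic neighborhood $B \subseteq A$ of $\xi$ of the form $B = \{x\} \cup C$, where $x \in X$ and $C$ is the $G$-side of $x$ containing $\xi$, and then apply \cref{thm:mf-hyp}.

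The central observation is that every such $B$ is automatically cycle-invariant, since $x$ is the only vertex of $B$ adjacent to $X \setminus B$ and so any simple $G$-cycle intersecting $C$ would have to use $x$ twice. Once such a $B$ is found, \cref{thm:mf-hyp} applied to the $G$-connected, $\w$-nonvanishing, cycle-invariant set $B$ yields an infinite $\w$-nonvanishing $M|B$-connected set $K$ (either $B$ itself, if $M|B$ is connected, or any component, if $M|B$ is disconnected). Since $K$ lies in a single $M$-component $C_M$, by compactness of $\^{C_M}^M$ a sequence of distinct vertices $(y_k) \subseteq K$ with $\w(y_k) \ge c > 0$ has a subsequence converging in $M$ to an $M$-end $\eta$ with $\^\w(\eta) \ge c$, so $\eta \in \partial^M_\w X$; since $y_k \in B \subseteq A$ and $\^A$ is clopen in $\^X^G$, the canonical image $\^\iota(\eta) = \lim_G y_k$ lies in $\^A$.

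The principal difficulty is producing the pair $(x, C)$ with $\{x\} \cup C \subseteq A$. I plan to handle this by iteration. Take a decreasing sequence of basic clopen neighborhoods $\^A = \^A_0 \supseteq \^A_1 \supseteq \dotsb$ of $\xi$ with $\bigcap_n \^A_n = \{\xi\}$. Provided $\xi$ is not the unique $\w$-nonvanishing end of its $G$-component $C_G$, each $A_n$ contains a $\w$-bifurcation of $G$: take a finite connected $F'_n \subseteq A_n$ covering the inner $G$-boundary of $A_n$ (made connected using paths in $A_n$); its $G$-sides include the $\xi$-containing $G$-component of $A_n \setminus F'_n$ and an outside side containing some $\w$-nonvanishing end (which exists outside $\^A_n$ for $n$ large enough), making $F'_n$ a $\w$-bifurcation. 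The hypothesis then produces a $\w$-bifurcation vertex $x_n \in A_n$, and as $x_n \to \xi$ in $\^X^G$, the clopen $\xi$-sides $\^C_{x_n}$ shrink toward $\{\xi\}$; hence eventually $C_{x_n} \subseteq A$, giving $B := \{x_n\} \cup C_{x_n}$. The essential content here — and the step I expect to be most delicate — is showing that the $C_{x_n}$ actually do shrink into $A$, which uses the tree-like structure forced by the hypothesis (that bifurcations are realized by single vertices).

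Finally, the degenerate case in which $\xi$ is the unique $\w$-nonvanishing end of $C_G$ (whence the hypothesis is vacuous there) is handled directly: pick $(y_k) \to \xi$ in $G$ with $\w(y_k) \to \^\w(\xi) > 0$, and apply \cref{thm:mf-hyp} to the cycle-invariant set $C_G$ itself. Either infinitely many $y_k$ share an $M$-component — in which case compactness yields an $M$-end $\eta \in \partial^M_\w X$ with $\^\iota(\eta) = \xi \in \^A$ — or the $y_k$ spread across distinct $M$-components of $C_G$, each $\w$-nonvanishing by \cref{thm:mf-hyp} and hence admitting a $\w$-nonvanishing $M$-end whose canonical image, being a $\w$-nonvanishing $G$-end of $C_G$, must equal $\xi$ by uniqueness.
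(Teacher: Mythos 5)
There is a genuine gap at precisely the step you flag as most delicate: the assertion that, because $x_n \to \xi$ in $\^X^G$, the $\xi$-sides $C_{x_n}$ of the $\w$-bifurcation vertices $x_n$ eventually lie inside $A$. This is false. The $\xi$-side of $x_n$ is just the component of $X \setminus \{x_n\}$ whose closure contains $\xi$; it need not shrink, and can contain all of $X \setminus A$ for every $n$. Concretely, take a tree with a ray $\gamma_0, \gamma_1, \dotsc$ converging to $\xi$, a pendant vertex $u_n$ attached at each $\gamma_n$ carrying two further infinite rays, and a ray $\delta$ attached at $\gamma_0$, with $\w \equiv 1$. The hypothesis of the lemma holds (every vertex of this tree is a bifurcation vertex), and with $A_n$ the $\xi$-side of $\gamma_n$ one has $u_n \in A_n$ and $u_n \to \xi$; yet for $x_n := u_n$ the $\xi$-side of $x_n$ is everything except the two rays hanging off $u_n$, so it always contains $\gamma_0$, $\delta$, and hence $X \setminus A$. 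Nothing in the hypothesis forces the produced bifurcation vertex to lie on the ``geodesic'' towards $\xi$, so the ``tree-like structure'' you invoke does not close the gap. The underlying misstep is insisting that the produced $M$-end map to $\xi$ itself, which is more than density requires.

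The paper's proof avoids the problem. When $\abs{\partial^G_\w X} \ge 2$, it uses \cref{thm:weights-furc} to reduce to the case where $A$ is itself a side of a $\w$-bifurcation, so that $X \setminus A$ is $G$-connected; no shrinking sequence $A_n$ is needed. The hypothesis applied once to $A$ gives a $\w$-bifurcation vertex $x \in A$. Since $x$ has at least two $\w$-nonvanishing sides and the connected set $X \setminus A$ lies in at most one of them, \emph{some} $\w$-nonvanishing side $C$ of $x$ --- not necessarily the $\xi$-side --- is contained in $A$; then $C \cup \{x\} \subseteq A$ is $G$-connected, cycle-invariant, and $\w$-nonvanishing, and \cref{thm:mf-hyp} produces a $\w$-nonvanishing $M$-end whose canonical image lands in $\^A$. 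That is all density asks for. (Your one-end case is correct but overcomplicated; once $\partial^M_\w X \ne \emptyset$ by \cref{thm:mf-hyp}, the image of any $\w$-nonvanishing $M$-end must be the unique $\w$-nonvanishing $G$-end, and you are done.)
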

\begin{proof}
If $\partial^G_\w X \ne \emptyset$, then $\partial^M_\w X \ne \emptyset$ by \cref{thm:mf-hyp} (and \cref{thm:weights-nonvanish}); this proves the case $\abs{\partial^G_\w X} = 1$.
Now suppose $\abs{\partial^G_\w X} \ge 2$.
Then a basic open set in $\partial^G_\w X$ is given by $\partial^G_\w X \cap \^A$ for a side $A$ of a $\w$-bifurcation (\cref{thm:weights-furc}).
Let $F \subseteq A$ be finite connected and containing the inner boundary of $A$; then each side of $F$ is contained in either $A$ or $X \setminus A$, and so $F$ is a $\w$-bifurcation.
So $A$ contains a $\w$-bifurcation, hence also contains a $\w$-bifurcation vertex $x$.
At most one nonvanishing side $D \subseteq X \setminus \{x\}$ of $x$ can contain the $G$-connected set $X \setminus A$; thus at least one nonvanishing side $C$ of $x$ is disjoint from $X \setminus A$, hence contained in $A$.
So $A$ contains the nonvanishing $G$-connected cycle-invariant set $C \cup \{x\}$, which has a nonvanishing $M$-end by \cref{thm:mf-hyp} whose canonical image is in $\^A$.
\end{proof}

\begin{definition}
\label{def:cb-deriv}
The \defn{Cantor--Bendixson derivative} of a topological space $X$ is the closed subspace $X' \subseteq X$ of nonisolated points.
\end{definition}

\begin{lemma}
\label{thm:mf-3dense}
Suppose every $\w$-nonvanishing boundary-finite $A \subseteq X$ containing a $\w$-trifurcation (of $(X,G)$) also contains a $\w$-trifurcation vertex (of $(X,G)$).
Then every neighborhood of a nonisolated $\w$-nonvanishing $G$-end $\xi$ contains the canonical images of two distinct $\w$-nonvanishing $M$-ends from a single $M$-component with at least $3$ $\w$-nonvanishing $M$-ends.
In particular, if $Y$ denotes the union of $M$-components with at least $3$ $\w$-nonvanishing $M$-ends, then the canonical image of $\partial^M_\w Y$ is a dense subset of $(\partial^G_\w X)'$.

\begin{comment}
\begin{equation*}
\set{(v,v)}{v \in (\partial^G_\w X)'}
\subseteq \-{\^f^2\paren[\Big]{\bigcup_{\substack{C \in X/M \\ \abs{\partial^M_\w C} \ge 3}} \set{(u_1,u_2) \in (\partial^M_\w C)^2}{u_1 \ne u_2}}}
\subseteq (\partial^G_\w X)^2.
\end{equation*}
\begin{equation*}
(\partial^G_\w X)'
\subseteq \-{\^f\paren[\Big]{\bigcup_{\substack{C \in X/M \\ \abs{\partial^M_\w C} \ge 3}} \partial^M_\w C}},
\end{equation*}
\end{comment}

\end{lemma}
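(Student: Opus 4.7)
The plan is to establish the main claim about neighborhoods first, from which the ``in particular'' density statement follows immediately. Let $\xi$ be a nonisolated $\w$-nonvanishing $G$-end with an open neighborhood $U$ in $\^X^G$. By \cref{thm:ends-basis} I may assume $U = \^A$, where $A \subseteq X$ is a $G$-connected boundary-finite side of some finite connected $F_0 \subseteq X$, so that both $A$ and $X \setminus A$ are $G$-connected and $A$ is $\w$-nonvanishing. Since $\xi$ is nonisolated in $\partial^G_\w X$, a straightforward separation argument produces three pairwise distinct $\w$-nonvanishing $G$-ends $\xi, \xi_1, \xi_2 \in \^A$: pick $\xi_1 \ne \xi$ in $\^A \cap \partial^G_\w X$, a clopen neighborhood $\^B \subseteq \^A$ of $\xi$ avoiding $\xi_1$, and then $\xi_2 \ne \xi$ in $\^B \cap \partial^G_\w X$.

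Next, the plan is to exhibit a $\w$-trifurcation of $(X,G)$ lying \emph{inside} $A$, so that the hypothesis yields a $\w$-trifurcation vertex $x \in A$ of $(X,G)$. Following the construction in the proof of \cref{thm:weights-furc}, I would first choose pairwise disjoint clopen neighborhoods $\^B_i \ni \xi_i$ with $B_i \subseteq A \setminus \partial_A$, where $\partial_A$ denotes the finite inner boundary of $A$ in $G$. This placement keeps the $G$-outer boundaries of the $B_i$'s inside $A$; by $G$-connectedness of $A$, these boundaries can then be joined by paths inside $A$ to produce a finite connected $F \subseteq A$ such that the $\xi_i$'s lie in three distinct $\w$-nonvanishing $G$-sides of $F$. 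Thus $A$ contains a $\w$-trifurcation of $(X,G)$, and by hypothesis $A$ also contains a $\w$-trifurcation vertex $x$. The three $\w$-nonvanishing sides $D_1, D_2, D_3 \subseteq X \setminus \{x\}$ of $x$ are split by $A$: since $X \setminus A$ is $G$-connected and disjoint from $x$, it lies in a single side of $x$, so at least two of the $D_i$'s, say $D_1$ and $D_2$, are contained in $A$.

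From this point the argument is a direct combination of the preceding lemmas. Each $D_i \cup \{x\}$ ($i = 1, 2$) is $G$-connected, cycle-invariant, and $\w$-nonvanishing, so \cref{thm:mf-hyp} implies that the $M|(D_i \cup \{x\})$-component $T_i$ of $x$ is $\w$-nonvanishing; by \cref{thm:weights-nonvanish} it therefore admits a $\w$-nonvanishing end, which via \cref{thm:weights-map-nonconn} applied to the inclusion $T_i \hookrightarrow X$ descends to a $\w$-nonvanishing $M$-end $\eta_i$ of the $M$-component $T$ of $x$, whose canonical image in $\^X^G$ lies inside $\^{D_i \cup \{x\}} \subseteq \^A$. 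The ends $\eta_1$ and $\eta_2$ are distinct because $\^{D_1}$ and $\^{D_2}$ are disjoint clopen subsets of $\^X^G$ (as $D_1 \cap D_2 = \emptyset$ and both $D_i$ are boundary-finite), and \cref{thm:mf-3ends} applied at the $\w$-trifurcation vertex $x$ guarantees that $T$ has at least three $\w$-nonvanishing $M$-ends, so $T \subseteq Y$ and $\eta_1, \eta_2 \in \partial^M_\w Y$. The ``in particular'' statement follows at once: every $\xi \in (\partial^G_\w X)'$ has every neighborhood intersected by $\^\iota(\partial^M_\w Y)$, so this image is dense in $(\partial^G_\w X)'$.

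The main obstacle I anticipate is ensuring in the second paragraph that the produced $\w$-trifurcation $F$ genuinely lies inside $A$, since \cref{thm:weights-furc} as stated only controls the three sides $C_i$, not the finite connected set $F$ itself. The boundary-avoidance device of pushing the $B_i$'s away from the finite $\partial_A$ is what closes this gap; once it is in place, everything else is a clean combination of \cref{thm:mf-hyp,thm:mf-3ends} with the standard topological features of the end compactification.
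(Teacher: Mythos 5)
Your proof is correct and follows essentially the same route as the paper's: reduce to a basic neighborhood $\^A$, build a $\w$-trifurcation $F \subseteq A$ from three $\w$-nonvanishing ends in $\^A$, invoke the hypothesis to obtain a $\w$-trifurcation vertex $x \in A$, note that $X \setminus A$ can meet at most one side of $x$, and finish with \cref{thm:mf-hyp,thm:mf-3ends}. The only cosmetic difference is in how $F \subseteq A$ is arranged: the paper pushes the neighborhoods $\^B_i$ off the inner boundary of $A$ \emph{and} adjoins that inner boundary to $F$, so that every side of $F$ lies entirely in $A$ or entirely in $X\setminus A$; you do only the pushing and then argue directly that each side containing a $\xi_i$ is trapped inside $B_i \subseteq A$. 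Both arrangements work, and the obstacle you flag is exactly the one the paper's extra step is designed to handle.
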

\begin{proof}
Let $\^A$ be a neighborhood of $\xi$; hence $A$ is nonvanishing.
As in the preceding lemma, we may assume that $A$ is a side of a $\w$-bifurcation.
Since $\xi$ is nonisolated in $\partial^G_\w X$, $G|A$ has infinitely many nonvanishing ends.
By applying \cref{thm:weights-furc} to three distinct nonvanishing ends of $G|A$ and clopen neighborhoods of them disjoint from the inner $G$-boundary of $A$, we get a finite connected $F \subseteq A$ containing the inner $G$-boundary of $A$ and with at least $3$ nonvanishing sides in $A$, hence also in $X$ since $F$ contains the inner $G$-boundary of $A$.
Thus $A$ contains a $\w$-trifurcation $F$, hence also contains a $\w$-trifurcation vertex $x$.
Now as in the preceding lemma, at most one nonvanishing side of $x$ can contain $X \setminus A$, hence at least two nonvanishing sides are contained in $A$, each of which has a nonvanishing $M$-end whose canonical image is in $\^A$.
\end{proof}

%%%%%%%%%%%%%%%%%%%%%%%%%%%%%%%%%%%%
\subsection{For general connected graphs}

Given a connected locally finite graph $(X,G)$ with a relative weight function $\w$ and with many $\w$-nonvanishing ends, there may not be any $\w$-(bi/tri)furcation vertices.
Our goal now is to ``collapse'' enough $\w$-(bi/tri)furcation \emph{sets} into $\w$-(bi/tri)furcation \emph{vertices}, and then apply the analysis of the preceding subsection to the resulting quotient graph. 

The construction below is $\w$-homogeneous, so we present it for a genuine weight function $\w : X \to \#R^+$ instead of a relative weight function, to avoid notational complications. Formally, the construction is done for $\w^b$, where $b \in X$ is a fixed basepoint, observing that it does not depend on the choice of $b$.

\begin{definition}
\label{def:quot-fin}
Let $(X,G)$ be a connected locally finite graph, $\@F$ be a pairwise disjoint family of finite connected subsets $F \subseteq X$.
Let $X/\@F$ denote the quotient of $X$ identifying all vertices in a single $F \in \@F$; formally,
\begin{equation*}
\textstyle
X/\@F := \@F \cup \set{\{x\}}{x \in X \setminus \bigcup \@F}.
\end{equation*}
Let $G/\@F$ denote the $G$-adjacency graph on $X/\@F$: for $F, F' \in X/\@F$,
\begin{equation*}
F \mathrel{G/\@F} F'
\coloniff  \exists x \in F,\, y \in F'\, (x \mathrel{G} y).
\end{equation*}
We call $(X/\@F,G/\@F)$ the \defn{quotient graph} of $(X,G)$ by $\@F$.

Given a weight function $\w : X -> \#R^+$, let $\w_\@F : X/\@F -> \#R^+$ be $\sup_\pi \w$ as defined in \cref{thm:weights-map}, where $\pi : X ->> X/\@F$ is the quotient map; that is,
\begin{equation*}
\w_\@F(F) := \max_{x \in F} \w(x).
\end{equation*}
By \cref{thm:ends-map-conn}, $\pi$ induces a homeomorphism
\begin{equation*}
\^\pi : \partial^G X \cong \partial^{G/\@F}(X/\@F),
\end{equation*}
which by \cref{thm:weights-map} takes $\^\w : \partial^G X -> [0,\infty]$ to $\^{\w_\@F} : \partial^{G/\@F} (X/\@F) -> [0,\infty]$, thus restricts to
\begin{equation}
\label{eq:quot-fin:ends-nonvanish}
\^\pi : \partial^G_\w X \cong \partial^{G/\@F}_{\w_\@F}(X/\@F).
\end{equation}
\end{definition}

\begin{definition}
\label{def:mf-coll}
Let $(X,G)$ be a connected locally finite graph with a weight function $\w : X -> \#R^+$.
Consider the following method for choosing a family $\@F$ as above:
\begin{enumerate}[label=(\arabic*)]
\item
Let $\@F_1$ be a maximal disjoint family of $\w$-trifurcation.
\item
Let $\@F_2$ be a maximal set of $\w$-bifurcations in $X \setminus \bigcup \@F_1$ and $\@F_3$ a maximal set of (unweighted) bifurcations in $X \setminus \bigcup (\@F_1 \cup \@F_2)$.
\item \label{def:mf-coll:2furc}
Finally, set $\@F=\@F_1\cup\@F_2\cup\@F_3$.
\end{enumerate}

Let $M_\@F \subseteq G/\@F$ be a $\w_\@F$-maximal subforest constructed according to \cref{def:mf}, with respect to some (unspecified) fixed subforest $H \subseteq G/\@F$ and tiebreaker linear ordering $\ledge$ on the undirected $G/\@F$-edges.

Finally, let $M \subseteq G$ be a subgraph defined by arbitrarily choosing a spanning tree on each (bi/tri)furcation $F \in \@F$, and for each $M_\@F$-edge between two different $F, F' \in X/\@F$, arbitrarily choosing a single $G$-edge between them (which exists by the definition of $G/\@F \supseteq M_\@F$).

It is easily seen that $M$ is then a forest, and that $M_\@F = M/\@F$ (with each $F \in \@F$ an $M$-tree).
The respective spaces of $\w$-nonvanishing ends are related as follows:
\begin{equation}
\label{diag:mf-coll}
\begin{tikzcd}
\partial^M_\w X
\ar[d,"\^\pi^M"',"\cong"{sloped}]
\ar[r,"\^\iota"] &
\partial^G_\w X \ar[d,"\^\pi^G","\cong"'{sloped}]
\\
\partial^{M/\@F}_\w(X/\@F) \ar[r,"\^{\iota/\@F}"] &
\partial^{G/\@F}_\w(X/\@F)
\end{tikzcd}
\end{equation}
Here, the horizontal maps are the canonical maps induced by the subgraph inclusions $\iota : (X,M) -> (X,G)$ and $\iota/\@F : (X/\@F,M/\@F) -> (X/\@F,G/\@F)$ (which preserve $\w$-nonvanishing ends by \cref{thm:weights-map-nonconn}), while the vertical homeomorphisms are induced by the quotient map $\pi : X ->> X/\@F$ as in \cref{eq:quot-fin:ends-nonvanish}.
Since clearly $(\iota/\@F) \circ \pi = \pi \circ \iota$, this square commutes.
\end{definition}

We now have the following main result, summarizing the end-preservation properties of the maximal subforest construction for a single connected graph:

\begin{theorem}
\label{thm:mf-coll}
Let $(X,G)$ be a connected locally finite graph with positive weight function $\w : X -> \#R^+$.
The ``collapsed maximal subforest'' $M \subseteq G$ constructed in \cref{def:mf-coll} has the following properties, where $\iota : (X,M) -> (X,G)$ is the inclusion:
\begin{enumerate}[label=(\alph*)]
\item \label{thm:mf-coll:dense}
$\^\iota : \partial^M X -> \partial^G X$ has dense image, as does its restriction
$\^\iota : \partial^M_\w X -> \partial^G_\w X$.
\item \label{thm:mf-coll:3ends}
If $G$ has at least $3$ $\w$-nonvanishing ends, then so does at least one component of $M$.
\item \label{thm:mf-coll:3dense}
Every neighborhood of a nonisolated $\w$-nonvanishing $G$-end contains the canonical images of at least $2$ distinct $\w$-nonvanishing $M$-ends from a single $M$-component with at least $3$ $\w$-nonvanishing $M$-ends.
\end{enumerate}
\end{theorem}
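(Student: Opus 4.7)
The plan is to reduce each of (a), (b), (c) to the abstract lemmas \cref{thm:mf-3ends,thm:mf-dense,thm:mf-3dense} of \cref{subsec:with_trifurc-vertices}, applied to the quotient graph $(X/\@F, G/\@F)$ with the induced forest $M_\@F = M/\@F$ and quotient weight $\w/\@F$, and then to transport the conclusions back to $(X,G)$ using the commutative square \cref{diag:mf-coll}. The vertical maps in that square are homeomorphisms on the relevant end spaces (by \cref{eq:quot-fin:ends-nonvanish} applied to both $G$ and $M$, which is legitimate because each $F \in \@F$ is an $M$-tree, making the quotient a componentwise finite-to-one graph homomorphism), so density of the canonical image, existence of components with at least $3$ nonvanishing $M$-ends, and the two-ends-in-one-component statement all transfer freely between top and bottom rows.

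The central step is verifying that $(X/\@F, G/\@F)$ satisfies the furcation-density hypotheses needed. Concretely, I claim that if $F'' \subseteq X/\@F$ is a $\w/\@F$-trifurcation (resp.\ $\w/\@F$-bifurcation, resp.\ unweighted bifurcation) of the quotient, then $\pi^{-1}(F'') \subseteq X$ is a $\w$-trifurcation (resp.\ etc.)\ of $(X,G)$. This rests on the side-correspondence fact that the components of $X \setminus \pi^{-1}(F'')$ biject with the components of $(X/\@F) \setminus F''$ (each $F' \in \@F \setminus F''$ being connected and disjoint from $\pi^{-1}(F'')$, hence entirely contained in a single side), with nonvanishing preserved by \cref{eq:quot-fin:ends-nonvanish}. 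Then by maximality of step 1 (resp.\ 2, 3) in the construction of $\@F$ from \cref{def:mf-coll}, $\pi^{-1}(F'')$ must meet some $F \in \@F$ from the corresponding layer, and that $F$, viewed as a single vertex of $X/\@F$, lies in $F''$ and is a furcation vertex of $(X/\@F, G/\@F)$ of the required type.

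With these hypotheses in hand, the three parts follow quickly. Part (b) uses \cref{thm:mf-3ends}: three distinct $\w$-nonvanishing $G$-ends produce a $\w$-trifurcation via \cref{thm:weights-furc}, which step-1 maximality promotes to a $\w/\@F$-trifurcation vertex in the quotient. The weighted half of part (a) is \cref{thm:mf-dense} applied with weight $\w/\@F$; the unweighted half is \cref{thm:mf-dense} applied with $\~\w \equiv 1$, permitted by \cref{thm:mf-hyp-reweight} since every $\w/\@F$-nonvanishing (hence infinite) set is $\~\w$-nonvanishing. Part (c) is \cref{thm:mf-3dense} applied in the quotient.

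The main technical obstacle is the side-correspondence claim in the second paragraph—ensuring that the quotient operation preserves the structure of sides of a furcation. Although this is essentially a componentwise application of \cref{def:quot-fin} and \cref{eq:quot-fin:ends-nonvanish} to the induced subgraphs on each side, some care is needed to confirm that connectedness of each $F \in \@F$ is the key ingredient preventing the quotient from merging or splitting sides.
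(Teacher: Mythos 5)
Your proof is correct and takes essentially the same route as the paper's: apply \cref{thm:mf-3ends,thm:mf-dense,thm:mf-3dense} to the quotient $(X/\@F, G/\@F, \w/\@F)$ and transport the conclusions along the commutative square \cref{diag:mf-coll}. The paper states the hypothesis verification as a one-liner ("given our choice of $\@F$ in \cref{def:mf-coll}"), whereas you spell out the actual mechanism — that a $\w/\@F$-furcation $F''$ in the quotient pulls back to a $\w$-furcation $\pi^{-1}(F'')$ upstairs, which by maximality of the corresponding layer of $\@F$ must contain a furcation block $F$, and $\{F\}$ then becomes the required furcation vertex in the quotient — and you correctly isolate the side-correspondence via connectedness of each $F \in \@F$ as the point where one could slip; this is the same content the paper compresses into its citation of \cref{thm:ends-map-conn} and \cref{eq:quot-fin:ends-nonvanish}.
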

\begin{proof}
By \cref{thm:mf-3ends,thm:mf-dense,thm:mf-3dense}, $\^{\iota/\@F}$ has the claimed properties, given our choice of $\@F$ in \cref{def:mf-coll}; hence so does $\^\iota$ since the above square commutes.
(To see the first part of \cref{thm:mf-coll:dense}, apply \cref{thm:mf-dense} with $\w$ replaced by the constant function $1$ (as noted in \cref{thm:mf-hyp-reweight}); the hypothesis of \cref{thm:mf-dense} is still satisfied, by \cref{def:mf-coll}\cref{def:mf-coll:2furc}.)
\end{proof}

One might expect that the properties stated in \cref{thm:mf-coll} can be strengthened in various ways; for instance, perhaps one could demand more of the $M$-components than merely ``at least $3$ nonvanishing ends''.
Indeed, we will show below that more can be said for almost every component of a quasi-pmp graph (see \cref{thm:mf-qpmp}).
However, the following shows that there are limitations to such strengthenings.

\begin{example}[Windmill graph]
\label{ex:windmill}
Let $(X,G)$ be the graph depicted in \cref{fig:windmill}.
Each ``blade'' of the windmill is a quadrant of the square lattice graph on $\#Z^2$.
The weight function $\w$ is constant $1$; thus all ends are nonvanishing.
The big dot vertices are trifurcations, and already form a maximal disjoint family of bifurcations $\@F$ as in \cref{def:mf-coll}; thus there is no need to collapse.
The tiebreaker linear ordering $\ledge$ is chosen so that each ``row'' of dotted edges is strictly increasing, and each dotted edge is $\ledge$ each solid edge.
Then the solid edges are precisely those that belong to the maximal subforest $M$.

Now the original end space $\partial^G X$ of this graph is perfect (has no isolated points).
But each $M$-component is just $3$ rays joined at their basepoint, hence has exactly $3$ ends.
This shows that \cref{thm:mf-coll}\cref{thm:mf-coll:3dense} is best possible in some sense.
Moreover, by removing some of the ``blades'' from $G$, we can cause $M$ to have infinitely many $2$-ended components, thereby showing that \cref{thm:mf-coll}\cref{thm:mf-coll:3ends} cannot be strengthened to ``every component of $M$''.
\end{example}

\begin{figure}[htb]
\centering
\includegraphics{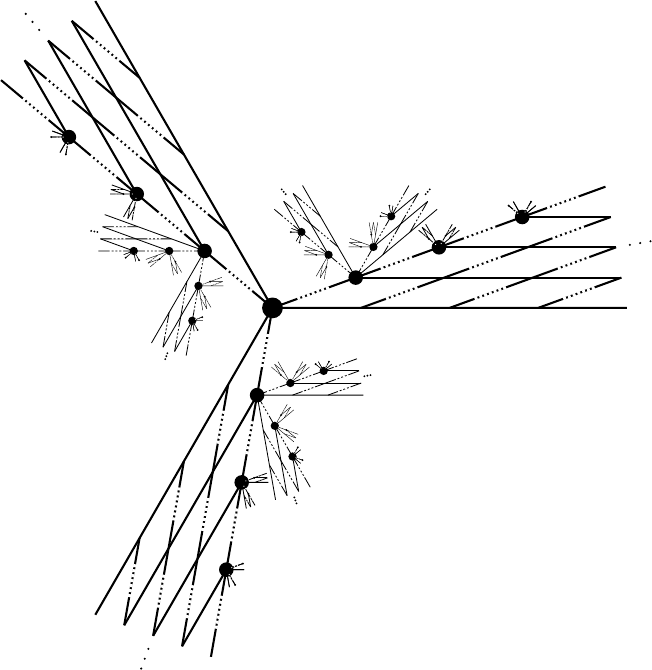}
\caption{Windmill graph described in \cref{ex:windmill}}
\label{fig:windmill}
\end{figure}

%%%%%%%%%%%%%%%%%%%%%%%%%%%%%%%%%%%%
\subsection{For Borel and quasi-pmp graphs}\label{sec:MF-borel}

Let $(X,G)$ be a locally finite Borel graph equipped with a Borel cocycle $\w : G -> \#R^+$.
We recall from \cref{rmk:borel-nonconn} that $\partial^G X$, $\partial^G_\w X$, etc.\ are interpreted as the (uncountable) disjoint unions of the end spaces of all components.

\begin{theorem}
\label{thm:mf-borel}
Let $(X,G)$ be a locally finite Borel graph, $\w : G -> \#R^+$ be a Borel cocycle.
There is a Borel subforest $M \subseteq G$ with the following properties, where $\iota : (X,M) -> (X,G)$ is the inclusion:
\begin{enumerate}[label=(\alph*)]
\item \label{thm:mf-borel:dense}
The induced $\^\iota : \partial^M X -> \partial^G X$ has dense image, as does its restriction
$\^\iota : \partial^M_\w X -> \partial^G_\w X$.
\item \label{thm:mf-borel:3ends}
Each $G$-component $C \in X/G$ with $\ge 3$ nonvanishing $G$-ends contains at least one $M$-component with $\ge 3$ nonvanishing $M$-ends. 
\item \label{thm:mf-borel:3dense}
For every nonisolated nonvanishing $G$-end $\xi$, every clopen neighborhood $\^A$ of $\xi$ contains the canonical image of at least two distinct nonvanishing $M$-ends from a single $M$-component with at least $3$ nonvanishing $M$-ends.
\end{enumerate}
\end{theorem}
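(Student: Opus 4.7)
The plan is to carry out the construction of \cref{def:mf-coll} Borelly and componentwise, then deduce the three properties directly from \cref{thm:mf-coll} applied in each $G$-component. Since all the notions in \cref{def:mf-coll} are $\w$-homogeneous (as noted in \cref{rmk:weights-homog} and after \cref{def:relative_weight}), it will not be necessary to pick a Borel basepoint per component; every step can be phrased purely in terms of the cocycle $\w$ and of $\lw$, which itself is basepoint-free.

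First, I would fix a Borel linear ordering $\ledge$ on the (undirected) edge set of $G$ by viewing $G$ as a standard Borel space and transporting the order of $\#N$ via a Borel bijection with a Borel subset of $\#N$. Next, I would build a Borel maximal disjoint family $\@F$ in three stages as in \cref{def:mf-coll}: first $\w$-trifurcations, then $\w$-bifurcations, then ordinary bifurcations. The crucial Borel ingredients here are (i) Luzin--Novikov uniformization, which supplies a Borel enumeration of the (countable per vertex) finite connected subsets of $X$, so that the greedy ``add if disjoint from what we already have'' procedure yields a Borel maximal disjoint subfamily; and (ii) the Borelness of the predicates ``$F$ is a $\w$-$n$-furcation'' and ``$F$ is a bifurcation'' on finite connected $F \subseteq X$. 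The latter reduces to the Borelness of the predicate ``a boundary-finite $C \subseteq X$ is $\w$-nonvanishing'', which in turn equals $\exists n\, \abs{\set{x \in C}{\w^{x_0}(x) \ge 1/n}} = \infty$ for any (Borel choice of) $x_0$ in the component of $C$, and this is manifestly $\Sigma^0_3$, hence Borel; everything is basepoint-independent because it is $\w$-homogeneous.

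Having $\@F$, I would form the quotient: the relation identifying $x,y$ iff they lie in a common $F \in \@F$ (or $x=y$) is finite Borel, so $X/\@F$ is standard Borel and $G/\@F$ is a locally finite Borel graph on it, with $\w$ descending to a Borel cocycle $\w/\@F$. I would then build $M_\@F \subseteq G/\@F$ as in \cref{def:mf} with $H = \emptyset$; the deletion condition is Borel because an edge $e = \{[x],[y]\}$ is deleted iff $[x],[y]$ are connected in the Borel subgraph of edges strictly $\gw e$ — and ``being in the same component of a locally finite Borel graph'' is a countable union of Borel conditions (``there is a path of length $n$''), hence Borel. Finally, I would lift $M_\@F$ back to $M \subseteq G$: on each $F \in \@F$ take the minimum spanning tree of $G|F$ with respect to $\lw$, and for each $M_\@F$-edge between distinct $F \ne F' \in X/\@F$ take the $\lw$-least $G$-edge connecting them; Luzin--Novikov again ensures Borelness of both choices.

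With $M$ in hand, properties \cref{thm:mf-borel:dense}--\cref{thm:mf-borel:3dense} follow from \cref{thm:mf-coll} applied to each $G$-component $C$ separately, because the restriction of our Borel construction to $C$ agrees with the construction of \cref{def:mf-coll} on $(C, G|C)$ (with a weight function $\w^b$ for any basepoint $b \in C$, and the choices above being particular legitimate ``arbitrary'' choices in \cref{def:mf-coll}), and since the conclusions of \cref{thm:mf-coll}\cref{thm:mf-coll:dense}--\cref{thm:mf-coll:3dense} are statements about a single component. The main obstacle I anticipate is the meticulous Borel bookkeeping, especially verifying that ``$F$ is a $\w$-$n$-furcation'' is Borel (which depends on the $F_\sigma$-character of $\w$-nonvanishing sets noted in \cref{rmk:ends-nonvanish-fsigma}) and that the cycle-cutting step is Borel despite potentially involving arbitrarily long paths; everything else is routine Borel combinatorics.
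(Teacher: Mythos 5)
Your proposal is correct and follows essentially the same route as the paper's proof: build a Borel maximal disjoint family $\@F$ of furcations, pass to the quotient Borel graph $(X/\@F, G/\@F)$ with induced cocycle, run the cycle-cutting algorithm of \cref{def:mf} Borelly, lift back via Lusin--Novikov, and invoke \cref{thm:mf-coll} componentwise. The only difference is cosmetic: the paper cites \cite[7.3]{KMtopics} for the maximal disjoint family and handles the cocycle on the quotient by picking the $\w$-heaviest element of each $F$ as a transversal, whereas you spell out the greedy construction and the Borelness of the predicates (the $\Sigma^0_3$ form of nonvanishing, the path characterization of deleted edges) more explicitly --- both are legitimate and the extra detail is if anything a plus.
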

\begin{proof}%[Proof of \cref{thm:mf-borel}]
This follows from implementing the algorithm of \cref{def:mf-coll} in a Borel manner on each $G$-component.
In detail, the maximal family $\@F$ of trifurcations in that algorithm may be chosen in a Borel manner (see \cite[7.3]{KMtopics}), since the notions of ``$\w$-trifurcation'', etc., are clearly Borel.
This yields a finite, hence smooth, Borel subequivalence relation ${\sim_\@F} \subseteq \#E_G$, whose standard Borel quotient $X/{\sim_\@F}$ yields on each $G$-component the quotient $X/\@F$ from \cref{def:quot-fin}.

Let $Y \subseteq X$ be a Borel transversal for $\sim_\@F$, choosing from each $F \in X/\@F$ a single element with maximum $\w$-weight (i.e., maximum $\w^x$-weight for any $x \in F$).
Define now the cocycle $\w_\@F$ on $G/\@F \subseteq (X/\@F)^2$, by identifying $X/\@F$ with $Y$ and then taking the restriction of $\w : \#E_G -> \#R^+$ to $Y$.
In other words, for $F, F' \in X/\@F$, we define $\w_\@F(F,F')$ to be $\w(x,y)$ for $\w$-heaviest elements $x \in F$ and $y \in F'$.
Then for $\w$-heaviest $x$ in $F$, the weight function $\w_\@F^F : [F]_{G/\@F} -> \#R^+$ will be exactly the quotient weight function $(\w^x)_\@F(F)$ from \cref{def:quot-fin}.

So we have defined a quotient Borel graph $(X/\@F,G/\@F)$ with cocycle $\w_\@F : G/\@F -> \#R^+$, which on each $G$-component is exactly the quotient graph from \cref{def:quot-fin}.
We may now construct the $\w_\@F$-maximal subforest $M/\@F \subseteq G/\@F$ in a Borel manner as in \cref{def:mf} (with any Borel subforest $H$ of $G/\@F$, e.g., $H := \emptyset$, and any Borel tiebreaker linear ordering $\ledge$ on the undirected $G/\@F$-edges).
Finally, lift $M/\@F$ to $M \subseteq G$ as in \cref{def:mf-coll}, choosing the finite spanning trees and liftings of $G/\@F$-edges in a Borel manner using Lusin--Novikov uniformization \cite[18.10]{Kcdst}.
The desired properties of this $M$ are then given by \cref{thm:mf-coll}.
\end{proof}

\begin{remark}[Containing a prescribed subforest]
The proof of \cref{thm:mf-borel} shows that we can ensure the resulting subforest $M$ contains \emph{any} given subforest $H$ of the quotient $G/\@F$, where $\@F$ is a family of $\w$-furcations.
However, the quotienting step does not allow us to ensure that $M$ contains an arbitrary (even hyperfinite) subforest of the original graph $G$. 
For instance, consider the Schreier graph of the Bernoulli shift of $\#F_2 \times \#Z_2$, and let $H$ be the Schreier graph of the action of one of the generators of $\#F_2$. 
Then, if $\@F$ is the natural perfect matching between the copies of $\#F_2$ in each orbit, taking the quotient creates cycles that contain only edges of $H$, so $M$ cannot contain $H$.
\end{remark}

For a quasi-pmp Borel graph, the above properties of the subforest $M$ may be significantly strengthened on a conull set, due to the following fact (whose analogue in percolation on a unimodular graph is \cite[Proposition 3.9]{LyonsSchramm99}):

\begin{lemma}
\label{thm:qpmp-ends-isol}
Let $(X,G)$ be a locally finite Borel graph, $\w : G -> \#R^+$ be a Borel cocycle, and $\mu$ be a $\w$-invariant probability measure on $X$.
For a.e.\ $G$-component, the space of $\w$-nonvanishing ends either has $\le 2$ elements or is perfect\footnote{\label{ftnote:perfect-not-Polish}This does not in general imply that it has continuum-many elements because the space may not be Polish; see \cref{rmk:ends-nonvanish-fsigma}.} (has no isolated points).
\end{lemma}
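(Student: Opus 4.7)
The plan is to argue by contradiction using \cref{thm:cber-compress}. Let
\[
Y := \set[\big]{x \in X}{\abs{\partial^G_\w [x]_G} \ge 3 \text{ and } \partial^G_\w [x]_G \text{ has an isolated point}},
\]
which is a $G$-invariant Borel set, and suppose $\mu(Y) > 0$. Then the normalized restriction $\mu|_Y/\mu(Y)$ is a $\w$-invariant probability measure on $Y$. I will show that $\#E_G|_Y$ is smooth with every class $\w$-infinite, contradicting \cref{thm:cber-compress}.

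To obtain smoothness, I Borel-select a single distinguished vertex from each component $C \subseteq Y$ as follows. Fix a Borel linear ordering $\le$ on $X$ and a Borel enumeration of all finite subsets of $X$. Since each component $C$ is connected and locally finite, $\^C^G$ is a compact metrizable Polish space, so $\partial^G C$ is compact metrizable and the set $I_C$ of isolated points of $\partial^G_\w C$ is at most countable; by Lusin--Novikov uniformization, $I_C$ can be Borel-enumerated uniformly in $C$. Let $\xi_0$ be the first element of this enumeration. Since $|\partial^G_\w C| \ge 3$ and $\xi_0$ is isolated in $\partial^G_\w C$, \cref{thm:weights-furc} (applied to $\xi_0$ and two further nonvanishing ends) produces a $\w$-bifurcation $F \subseteq C$ such that $\xi_0$ is the only $\w$-nonvanishing end on one side of $F$ while some other side still contains a nonvanishing end. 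Take $F_0$ to be the lex-first such $F$ in the fixed enumeration, and let $v_C \in F_0$ be the $\le$-least vertex. The resulting set $Z := \set{v_C}{C \subseteq Y}$ is a Borel transversal for $\#E_G|_Y$, which is therefore smooth.

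Next, each component $C \subseteq Y$ is $\w$-infinite: fixing a basepoint $b \in C$ and any $\w$-nonvanishing end $\xi$ of $C$, there is a sequence of distinct vertices $x_n \in C$ converging to $\xi$ with $\w^b(x_n) \ge \^\w^b(\xi)/2 > 0$ for all large $n$, whence $\sum_{y \in C} \w^b(y) = \infty$. So every $\#E_G|_Y$-class is $\w$-infinite, and we have a smooth countable Borel equivalence relation with all classes $\w$-infinite admitting the $\w$-invariant probability measure $\mu|_Y/\mu(Y)$, directly contradicting \cref{thm:cber-compress}.

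The main obstacle is the Borel bookkeeping in the middle step: one must verify that the properties ``$\xi$ is isolated in $\partial^G_\w C$'' and ``$F$ is a $\w$-bifurcation whose $\xi$-side contains no other nonvanishing ends'' are Borel conditions in the combined parameter. For isolatedness, this boils down to the existence of a boundary-finite $A \subseteq C$ with $\xi \in \^A$ and $\xi$ the unique nonvanishing end in $\^A$, which by \cref{thm:weights-nonvanish} reduces to a countable quantifier over finite witnesses and hence is Borel; the bifurcation and side conditions are handled similarly. Once this is in place, Lusin--Novikov uniformization delivers the required Borel selections uniformly in $C$, and the contradiction is immediate.
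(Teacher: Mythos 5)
Your proposal is correct, and the core strategy is the same as the paper's: locate, for each bad component, a $\w$-trifurcation one of whose sides contains a unique nonvanishing end, and then apply \cref{thm:cber-compress} to a suitable smooth relation with $\w$-infinite classes. Where you genuinely differ is in the smoothness step. You restrict $\#E_G$ to the $G$-invariant bad set $Y$ and exhibit a Borel transversal by picking, per component, the lex-first qualifying trifurcation and then its $\le$-least vertex. The paper instead takes a \emph{maximal disjoint} Borel family $\@F$ of such trifurcations, lets $Y$ be the union of the unique-nonvanishing-end sides $D$ of members of $\@F$, and defines the smaller equivalence relation $E$ on this $Y$ whose classes are those sides; $E$ is smooth because the inner boundary of each $D$ is a canonical finite complete section, each class is $\w$-infinite because nonvanishing, so $\mu(Y)=0$ by \cref{thm:cber-compress}, and then quasi-invariance plus maximality of $\@F$ give $\mu([Y]_G)=0$, which is the full bad set. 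Your route avoids the final quasi-invariance/maximality step, but pays in Borel bookkeeping: selecting a single distinguished vertex per whole component requires a Borel linear ordering on finite subsets of $X$, whereas the paper's sides come with a canonical finite complete section for free. One imprecision to flag: you cannot apply Lusin--Novikov directly to enumerate the isolated ends $I_C$, since the end space is not a standard Borel space uniformly in $C$; but as you yourself suggest at the end, this is repaired by coding each isolated nonvanishing end via a finite $\w$-trifurcation (or boundary-finite side) isolating it, which is a genuinely Borel object, and that is in fact exactly what your selection of $F_0$ accomplishes, so the detour through $\xi_0$ and $I_C$ can simply be dropped.
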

\begin{figure}[htb]
\begin{center}
\begin{tikzpicture}[thick, scale=0.7]

%Generation 4	
\node[Big dot] (V41) at (0,0) {};

%Generation 5	
\node[Big dot] (V51) at (1.5,-1) {};
\node[Big dot] (V52) at (1.5,1) {};
%Generation 7	
\node[Big dot] (V71) at (4,-2) {};
\node[Big dot] (V72) at (5,2.5) {};

%Generation 8	
\node[Big dot] (V81) at (2,-3) {};

\node[Big dot] (V9) at (2,2.2) {};

\draw [name path = A] plot [smooth]  coordinates {(0,0) (-1,1.2) (-2,1.8) (-6.4,2)};
\draw [name path = B] plot [smooth]  coordinates {(0,0) (-1,-1.2) (-2,-1.8) (-6.4,-2)};

\draw plot [smooth, name path =C]  coordinates {(0,0) (0.1,1.5) (0.2,2) (0.3,2.2) (0.4,2.3) (0.5,2.44) (0.6,2.54) (0.7,2.65) (1,3)};
\draw plot [smooth, name path =D]  coordinates {(0,0) (3,0.5) (4,1) (6,3)};

\draw plot [smooth, name path =E]  coordinates {(0,0) (0.1,-1.5) (0.2,-2) (0.3,-2.2) (0.4,-2.3) (0.5,-2.44) (0.6,-2.54) (0.7,-2.65) (1,-3)};
\draw plot [smooth, name path =D]  coordinates {(0,0) (3,-0.5) (4,-1) (6,-3)};
\tikzfillbetween[of=A and B,split] {pattern=north east lines, opacity=.25, inner sep=1pt};

\draw (-4,-2) node[below] {a nonvanishing isolated end};
\draw (6,0.5) node[below] {the rest of the graph};
\draw (-0.25,-0.3) node[below] {$F$};
\draw (-3.5,0.5) node[below] {$D$};
\end{tikzpicture}
\end{center}
\caption{This is an illustration of the proof of \cref{thm:qpmp-ends-isol}, where the large dots represent the maximal disjoint Borel family $\@F$ of $\w$-trifurcations.}
\label{Fig:smooth proof}
\end{figure}
\begin{proof}
Suppose some $G$-component $C$ has at least $3$ $\w$-nonvanishing ends, at least one of which is isolated (among $\w$-nonvanishing ends).
Then any such isolated end $\xi\in \partial_\w C$ belongs to a side $D \subseteq C \setminus F$ of a $\w$-trifurcation $F$ with no other $\w$-nonvanishing ends: to see this, apply \cref{thm:weights-furc} to $\xi$, any neighborhood isolating it, and two other nonvanishing ends.
Furthermore, such $D$ then cannot also contain a $\w$-trifurcation $F'$ (of $C$), or else at least two nonvanishing sides of $F'$ would be disjoint from $F$, yielding at least two nonvanishing ends in $D$.

Now take a maximal disjoint Borel family $\@F$ of $\w$-trifurcations $F \subseteq X$ which have at least one side $D$ with exactly one nonvanishing end (which is hence isolated); see \cref{Fig:smooth proof}.
Let $Y \subseteq X$ be the union of all such sides $D$ of all $F \in \@F$.
Then each $y \in Y$ belongs to a unique such $D$ for a unique $F \in \@F$, since if it also belonged to a one-ended side $D'$ of another $F' \in \@F$, then either $F \subseteq D'$ or $F' \subseteq D$ which is impossible as noted above.
Let $E \subseteq \#E_G$ be the equivalence relation on $Y$ whose classes are exactly all such sides $D$ of $F \in \@F$, hence are nonvanishing.
Then $E$ is smooth, since we may choose in a Borel way a nonempty finite subset of each class $D \in Y/E$, namely the inner boundary of $D$ (i.e., the vertices adjacent to $F$).
So by \cref{thm:cber-compress}, $\mu(Y) = 0$, and hence $\mu([Y]_G) = 0$ by quasi-invariance of $\mu$.
But by maximality of $\@F$, $[Y]_G$ is precisely the union of the $G$-components with at least $3$ $\w$-nonvanishing ends, at least one of which is isolated (among $\w$-nonvanishing ends).
\end{proof}

\begin{corollary}
\label{thm:mf-qpmp}
The subforest $M \subseteq G$ from \cref{thm:mf-borel} additionally obeys the following for every $\w$-invariant probability measure $\mu$:
\begin{enumerate}
\item[($*$)]
For a.e.\ $G$-component $C \in X/G$ with $\ge 3$ nonvanishing $G$-ends, the space $\partial^M_\w D$ of nonvanishing ends in each $M$-component $D \subseteq C$ is nonempty and perfect\cref{ftnote:perfect_space}.
\end{enumerate}
\textcolor{black}{Moreover, for a fixed $\w$-invariant probability measure $\mu$, the subforest $M$ can be made $\mu$-ergodic relative\cref{ftnote:relative_ergodicity} to $G$.}
\end{corollary}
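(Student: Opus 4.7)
My plan is to combine Theorem \ref{thm:mf-borel} with Lemma \ref{thm:qpmp-ends-isol} applied now to \emph{both} $G$ and $M$ (noting that since $M \subseteq G$, any perfect $M$-matching is a perfect $G$-matching, so $\w$-invariance of $\mu$ for $G$ passes to $M$), and for the ergodic moreover clause, to incorporate a prescribed ergodic hyperfinite subforest into the construction of $M$.

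For $(*)$, I would first apply Lemma \ref{thm:qpmp-ends-isol} to both $G$ and $M$, obtaining that for $\mu$-a.e.\ $x$, both $\partial^G_\w[x]_G$ and $\partial^M_\w[x]_M$ either have at most two elements or are perfect. In particular, for a.e.\ $G$-component $C$ with $\ge 3$ nonvanishing $G$-ends, $\partial^G_\w C$ is perfect, so every nonvanishing $G$-end of $C$ is nonisolated and the hypotheses of Theorem \ref{thm:mf-borel}\cref{thm:mf-borel:3ends},\cref{thm:mf-borel:3dense} are activated; part \cref{thm:mf-borel:3ends} already provides at least one $M$-component $D_0 \subseteq C$ with $|\partial^M_\w D_0| \ge 3$, which is then perfect by the $M$-dichotomy.

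The main obstacle is upgrading ``at least one $M$-component'' to ``every $M$-component''. Let $B \subseteq X$ be the Borel $\#E_M$-invariant set consisting of those $M$-components $D$ such that $[D]_G$ has $\ge 3$ nonvanishing $G$-ends but $|\partial^M_\w D| \le 2$; the goal is to show $\mu(B) = 0$. The plan is to mimic the compression argument in the proof of Lemma \ref{thm:qpmp-ends-isol}: any such $D$ ``misses'' at least one nonvanishing $G$-end of $[D]_G$ via the canonical map $\^\iota$, witnessed (via \cref{thm:weights-furc}) by a $\w$-trifurcation $F$ of $G$ inside $[D]_G$ such that the corresponding side of $F$ restricts on $M$ to a set with at most one nonvanishing $M$-end. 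Taking a maximal disjoint Borel family of such configurations via \cite[7.3]{KMtopics} produces a smooth $\#E_M$-subequivalence relation on a nonvanishing Borel cross-section of $B$ whose classes remain $\w$-infinite (each is a nonvanishing side of a $\w$-trifurcation in $G$), contradicting $\w$-invariance of $\mu$ by Lemma \ref{thm:cber-compress}. Given $\mu(B) = 0$, every $M$-component in an eligible $G$-component has $\ge 3$ nonvanishing $M$-ends, and the $M$-dichotomy then upgrades this to ``perfect'', yielding the desired infinite, dense-in-itself space of nonvanishing $M$-ends.

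For the ergodicity moreover clause, I would invoke the theorem of Tserunyan cited in the paragraph after Theorem \ref{intro:witnessing_nonamenability}: under $G$-ergodicity of $\mu$, there is an ergodic hyperfinite Borel subgraph $H_0 \subseteq G$. Since hyperfinite CBERs are Borel treeable (by an inductive spanning-tree construction along any hyperfinite exhaustion $\#E_{H_0} = \bigcup_n E_n$, at each stage extending the previous spanning forest across $H_0$-edges bridging merging finite classes), I may pass to an ergodic hyperfinite Borel subforest $H^* \subseteq H_0 \subseteq G$ with $\#E_{H^*} = \#E_{H_0}$. Rerunning the construction of Theorem \ref{thm:mf-borel} with $H^*$ specified as the prescribed acyclic subgraph input to Definition \ref{def:mf} (after collapsing the furcation family $\@F$, choosing representatives so that $H^*$ projects to an acyclic subgraph of $G/\@F$) yields a subforest $M \supseteq H^*$, so $\#E_M \supseteq \#E_{H^*}$ is $\mu$-ergodic, as required.
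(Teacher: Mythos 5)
Your high-level structure is close to the paper's: the paper also applies Lemma~\ref{thm:qpmp-ends-isol} to $M$ (that $\mu$ remains $\w$-invariant for $\#E_M \subseteq \#E_G$ is implicit there), feeds it into Theorem~\ref{thm:mf-borel}\cref{thm:mf-borel:3ends}, and for the ergodicity clause feeds a $\mu/\@F$-ergodic hyperfinite subforest $H$ into Definition~\ref{def:mf} as the fixed acyclic subgraph. So the outer scaffolding of your argument is the same.

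Where you diverge is in the step you flag yourself: upgrading ``at least one $M$-component with $\ge 3$ nonvanishing $M$-ends'' to ``every $M$-component.'' The paper does \emph{not} run an additional compression argument for this; its entire proof of $(*)$ is the two-line observation that the union $Y$ of $M$-components with $\ge 3$ nonvanishing $M$-ends and at least one isolated is null, hence $[Y]_G$ is null, and then one invokes \cref{thm:mf-borel:3ends}. Your proposed compression argument, meanwhile, has a genuine gap. You take $B$ to be the union of bad $M$-components $D$ (with $|\partial^M_\w D|\le 2$ inside a $G$-component with $\ge 3$ nonvanishing ends) and want to assign to each such $D$ a $\w$-nonvanishing side $S$ of a $\w$-trifurcation $F \subseteq [D]_G$ so that the resulting relation is smooth via Lemma~\ref{thm:cber-compress}. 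But the proof of Lemma~\ref{thm:qpmp-ends-isol} gets smoothness precisely because the chosen side has \emph{exactly one nonvanishing $G$-end}, which forces it to contain no further $\w$-trifurcation, and therefore forces the chosen sides across the maximal family to be pairwise disjoint with a canonical finite identifier (the inner boundary). Your condition --- that $S$ ``restricts on $M$ to a set with at most one nonvanishing $M$-end'' --- does not control the $G$-end structure of $S$ at all: $S$ may contain many nonvanishing $G$-ends, hence many other $\w$-trifurcations of the maximal family, and a single vertex may lie in several such sides. Without the disjointness and uniqueness, neither the well-definedness nor the smoothness of your equivalence relation goes through, and Lemma~\ref{thm:cber-compress} cannot be applied. (Note also that the classes of your relation are $\#E_G$-connected rather than $\#E_M$-connected, so calling it an $\#E_M$-subequivalence relation is off; one can only say it refines $\#E_G$, but smoothness is still the real obstacle.)

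For the ergodicity clause, your idea is right, but the implementation is slightly off: the paper takes the $\mu/\@F$-ergodic hyperfinite subforest \emph{directly on the quotient graph} $(X/\@F, G/\@F)$ with the quotient measure $\mu/\@F$ (obtained via the identification $X/\@F \cong Y$), rather than taking an ergodic hyperfinite subforest $H^* \subseteq G$ and ``choosing representatives so that $H^*$ projects to an acyclic subgraph of $G/\@F$.'' There is no such freedom in choosing representatives that guarantees the projection stays acyclic; the quotient of a forest can acquire cycles. Constructing $H$ downstairs from the start sidesteps this entirely. Also, for treeability of the ergodic hyperfinite subgraph, the paper simply cites \cite[Lemma 2.4]{Mends} rather than running your inductive spanning-tree construction, but that difference is cosmetic.
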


\begin{proof}%[Proof of \cref{{thm:mf-qpmp}}]
To conclude ($*$) notice that by \cref{thm:qpmp-ends-isol}, the union of all $M$-components with an isolated nonvanishing end is $\mu$-null.

As for relative ergodicity, let $\@F$ and $Y\subseteq X$ be as in the proof of \cref{thm:mf-borel}. 
Identifying $X/\@F$ with $Y$, we equip $X/\@F$ with the measure $\mu|_Y$. 
Note then that the Radon--Nikodym cocycle of $\#E_{G / \@F}$ with respect to $\mu |_Y$ is exactly $\w_\@F$ as in the proof of \cref{thm:mf-borel}.
Now, in the proof of \cref{thm:mf-borel}, when constructing the maximal subforest $M/\@F \subseteq G/\@F$ as in \cref{def:mf}, take the fixed subforest $H$ to be a hyperfinite subforest of $G/\@F$ that is $\mu|_Y$-ergodic relative to $G/\@F$. 
Such a subforest exists by \cite[Theorem 1.3]{Ts:hyperfinite_ergodic_subgraph}, which yields an ergodic hyperfinite subgraph; every such subgraph contains a Borel treeing by \cite[Lemma 2.4]{Mends}.
Thus, $M/\@F$ is $\mu |_Y$-ergodic relative to $G/\@F$, which easily implies that its lift $M$ (as in the proof of \cref{thm:mf-borel}) is $\mu$-ergodic relative to $G$.
\end{proof}

This proves \cref{intro:witnessing_nonamenability}.
The main content of \cref{intro:witnessing_nonamenability} is to provide a witness to nowhere amenability of the graph $G$.
The nowhere amenability itself follows more easily:

\begin{proposition}\label{3-nonvanishing=>nonamenability} 
Let $G$ be a locally finite quasi-pmp Borel graph on a standard probability space $(X,\mu)$ and let $\w : \#E_G -> \#R^+$ be the Radon--Nikodym cocycle of $\#E_G$ with respect to $\mu$. 
If each $G$-component contains $\ge 3$ $\w$-nonvanishing ends then $G$ is $\mu$-nowhere amenable.
\end{proposition}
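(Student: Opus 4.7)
The plan is to reduce to \cref{Anush-Robin} by passing to a Borel spanning subforest of $G$ with the same connected components, rather than controlling the end structure as carefully as the $\w$-maximal construction of \cref{sec:MF} does. First, I would produce a Borel acyclic subgraph $F \subseteq G$ with $\#E_F = \#E_G$ by enumerating $G$-edges in a Borel fashion via Lusin--Novikov uniformization \cite[18.10]{Kcdst} and greedily including each edge that does not close a cycle with those already included. A straightforward argument shows that this forest is spanning: if some $G$-path between $x, y$ witnessed that the $F$-components of $x, y$ differed, then the first edge of that path leaving the $F$-component of $x$ would not have created a cycle with the already included edges and hence would have been added, a contradiction.

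The key point is the following: for every $G$-component $C$, the canonical map $\^\iota : \partial^F C \to \partial^G C$ (\cref{def:canonical_end-map}) restricts to a surjection onto $\partial^G_\w C$, i.e., every $\w$-nonvanishing $G$-end $\xi$ is $\^\iota(\eta)$ for some $\w$-nonvanishing $F$-end $\eta$. Given such $\xi$, I use \cref{thm:weights-nonvanish}\cref{thm:weights-nonvanish:end} to pick a countable decreasing neighborhood basis $(\^A_n)$ of $\xi$ in $\^C^G$ together with vertices $x_n \in A_n$ satisfying $\w(x_n) \ge \^\w(\xi)/2$, using the fact that $\sup_{A_n} \w \ge \^\w(\xi) > 0$. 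Then $x_n \to \xi$ in $G$, and by compactness of $\^C^F$ a subsequence $(x_{n_k})$ converges to some $\eta \in \^C^F$; this $\eta$ is an $F$-end (not a vertex), since vertices are isolated in $\^C^F$ while $(x_n)$ escapes every finite set. Upper semicontinuity of $\^\w$ in $F$ yields $\^\w(\eta) \ge \limsup_k \w(x_{n_k}) \ge \^\w(\xi)/2 > 0$, while continuity of $\^\iota$ gives $\^\iota(\eta) = \lim_k x_{n_k} = \xi$.

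Consequently, each $F$-component inherits at least $3$ $\w$-nonvanishing $F$-ends from its ambient $G$-component. Since $F$ is a locally finite acyclic quasi-pmp Borel graph whose Radon--Nikodym cocycle is again $\w$ (because $\#E_F = \#E_G$), \cref{Anush-Robin} asserts that $F$ is $\mu$-nowhere amenable; the same conclusion passes to $\#E_F = \#E_G$ and hence to $G$. The only technical subtlety I anticipate is the surjectivity of the end map on $\w$-nonvanishing ends, which is the crux of the argument; the Borel greedy construction of $F$ and the final invocation of \cref{Anush-Robin} are routine.
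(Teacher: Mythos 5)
Your overall strategy---pass to a Borel spanning subforest of $G$ and invoke \cref{Anush-Robin}---is precisely the paper's strategy, and your argument that the canonical end map of a spanning subforest surjects onto the $\w$-nonvanishing $G$-ends is correct; it is the content of the equation $(*)$ in \cref{thm:weights-map} specialized to the inclusion, which the paper cites at exactly this step.

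The gap is in the first step: a Borel acyclic $F \subseteq G$ with $\#E_F = \#E_G$ ($\mu$-a.e.) need not exist under the hypotheses of the proposition. If it did, $\#E_G$ would be $\mu$-treeable, but this can fail; for example, \cref{coinduced-action_nonamenable} gives a locally finite quasi-pmp Borel graph with $\ge 3$ $\w$-nonvanishing ends in each component whose connectedness relation is a.e.\ non-treeable by \cite{AS(T)}, so no $\mu$-a.e.\ spanning Borel subforest of that $G$ exists. Even if one restricts to the treeable case, the greedy construction you describe is not well-defined: Lusin--Novikov gives a decomposition of $G$ into countably many Borel partial injections, not a Borel well-ordering of the edges within each component, and including all non-cycle-closing edges of an entire Borel partial injection at once can introduce cycles (two new edges may together close a cycle with old forest edges even though neither alone would). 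The paper's proof avoids all of this by arguing by contradiction: after restricting to an $\#E_G$-invariant positive-measure set on which $G$ is assumed amenable, $G$ becomes $\mu$-hyperfinite, and \emph{hyperfinite} locally finite Borel graphs do admit Borel spanning treeings \cite[Lemma 2.4]{Mends}; applying the end-preservation argument to that treeing then contradicts \cref{Anush-Robin}. The contradiction hypothesis is precisely what supplies the spanning treeing, which your direct construction cannot provide unconditionally.
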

\begin{proof}
Suppose towards the contradiction that $G$ is amenable on an $\#E_G$-invariant Borel set of positive measure. 
By restricting to that set, we may assume that $G$ is amenable. 
Then there exists a Borel treeing $T \subseteq G$ that spans every component of $G$. 
By~\cref{Anush-Robin}, it is enough to show that each $T$-component has $\ge 3$ $\w$-nonvanishing ends, which follows immediately from \cref{thm:weights-map} applied to the identity map from $T$ to $G$. However, we also give a direct proof of this fact.

Let $C$ be a $T$-component and let $F \subseteq C$ be a $\w$-trifurcation of $G$.
Each $\w$-nonvanishing side $D$ of $F$ in $G$ still has finite boundary in $T$, so $D$ must contain a $\w$-nonvanishing end of $T | C$ (by the local finiteness of $T$).
Since $F$ has $\ge 3$ $\w$-nonvanishing sides, there are $\ge 3$ $\w$-nonvanishing ends in $T | C$.
\end{proof}

%%%%%%%%%%%%%%%%%%%%%%%%
% Maximal forest for nonunimodular graphs 
%%%%%%%%%%%%%%%%%%%%%%%%

\section{Maximal forest as a random subgraph}\label{sec:MFperc}

We connect the construction of a maximal subforest of a connected locally finite graph from \cref{sec:MF} to the study of random spanning forests, in particular the Free Minimal Spanning Forest (FMSF) \cite{LPS06msf}. We show that our construction of a maximal subforest yields the natural extension of FMSF for nonunimodular graphs. We first give a brief introduction to random spanning forests and their importance in percolation theory, after which we quickly review nonunimodular graphs and present our construction of the Free Maximal Spanning Forest for such graphs, as well as its properties.

Throughout this section, let $G := (V,E)$ denote a locally finite graph. %While the constructions of random spanning forests, including ours, also work for disconnected (countable locally finite) graphs, the presentation for connected graphs is cleaner and without any loss of generality, as one can apply these constructions to each connected component.

\subsection{Random spanning forests and percolation theory}

Classically, the \defn{Free Minimal Spanning Forest} $\FMSF(G)$ on the graph $G := (V,E)$ is a random subforest of $G$ constructed as follows:
\begin{itemize}
\item Let $\{U_e\}_{e\in E}$ be a collection of independent random variables with $\mathrm{Uniform}[0,1]$ distribution. Notice that almost surely we have $U_e\neq U_{e'}$ for each pair of distinct edges $e$ and $e'$.

\item For each cycle in $G$, delete the edge $e$ with the largest value of the label $U_e$. 
\end{itemize}
In other words, for each $e \in E$, we have $e \in \mathrm{FMSF}(G)$ if and only if each cycle containing $e$ also contains another edge $e'$ with $U_e < U_{e'}$.

The \defn{Wired Minimal Spanning Forest} $\WMSF(G)$ is constructed similarly, but bi-infinite paths are also considered to be cycles.

A \defn{bond percolation} process on $G$ is a probability measure $\textbf{P}$ on $2^E$. 
We refer to elements $\omega \in 2^E$ as \defn{configurations} and we say that an edge $e \in E$ is \defn{present} (or \defn{open}) in $\omega$ if $e \in \omega$ (we think of $\omega$ as a subset of $E$). 
The connected components of $\omega$ are called \defn{clusters}. 
Finally, for a subgroup $\Gamma \le \Aut(G)$, we say that percolation $\textbf{P}$ is $\Gamma$-\defn{invariant} if the measure $\textbf{P}$ is invariant under the diagonal action of $\Gamma$ on $G$. For $p \in [0,1]$, a bond percolation process on $G$ is called $\mathrm{Bernoulli}(p)$ if every edge is present in a configuration independently with probability $p$. 
We denote the measure associated with $\mathrm{Bernoulli}(p)$ bond percolation by $\#P_p$. 
Henceforth, we will drop the word ``bond'' as we never use other kinds of percolations.

A central interest in percolation theory is the number of infinite components in a configuration. Classically, in Bernoulli$(p)$ percolation on transitive graphs, by \cite[Theorem 3]{BSbeyond} this number is constant a.s., and can take values only in  $\{0,1,\infty\}$.
In fact, there are two phase transitions that occur at the following critical values:
\begin{align}
p_c(G)&:=\inf\{p \in [0,1] : \mathbb{P}_p(\text{there is an infinite cluster})=1\}\label{eq:pc},
\\
p_u(G)&:=\inf\{p \in [0,1] : \mathbb{P}_p(\text{there is exactly one infinite cluster})=1\}. \label{eq:pu}
\end{align}
Then $\#P_p$-a.s., a configuration $\omega \in 2^E$ contains 
\begin{itemize}
\item only finite clusters when $p \in [0, p_c)$;
\item infinitely many infinite clusters when $p\in(p_c,p_u)$;
\item a unique infinite cluster when $p\in(p_u,1]$ (see \cite{HaggstromPeres99,Schonmann99}).
\end{itemize}

The study of FMSF and WMSF is closely connected to percolation theory. 
For example, \cite[Proposition 3.6]{LPS06msf} shows that for any locally finite connected graph $G$ we have that $p_c(G) = p_u(G)$ if and only if $\FMSF(G) = \WMSF(G)$. 
Thus, these random forests are related to a famous conjecture of Benjamini and Schramm \cite[Conjecture 6]{BSbeyond}, which says that a locally finite connected quasi-transitive\footnote{A graph $G$ is called \defn{quasi-transitive} if the natural action of $\Aut{(G)}$ group on it has finitely many orbits.} graph $G$ is amenable if and only if $p_c(G) = p_u(G)$.

%The fact that amenability of $G$ implies $p_c=p_u$ was proven in \cite{pc=pu}; however the other direction of this conjecture is only known in particular cases and is one of the central questions of the subject. 
%In light of this, the following characterisation of $p_c(G) = p_u(G)$ is particularly interesting.

%that include some Cayley graphs \cite{PSN00}, transitive nonamenable graphs with large Cheeger constant \cite{Schonmann01}, with small spectral radius \cite{BSbeyond, PSN00}, or with large girth \cite{Nachmias12}, as well as all quasi-transitive graphs whose automorphism group has a quasi-transitive nonunimodular subgroup \cite{Hutchcroft20}.

\subsection{\textcolor{black}{Unimodular and nonunimodular groups and graphs}}\label{sec:nonunimod}

Recall that a locally compact group is called \defn{unimodular} if a left Haar measure is also right-invariant.
A \textit{connected} locally finite graph $G := (V,E)$ is called \defn{unimodular} if its automorphism group $\Aut(G)$ is, where $\Aut(G)$ is equipped with the topology of pointwise convergence.
We refer to \cite{BLPS99inv,LyonsBook} for a survey of unimodular automorphism groups and their significance for random subgraphs.

Let $\Gamma$ be a locally compact group acting transitively on a countable set $V$ such that the stabilizer $\Gamma_v := \{\gamma\in\Gamma \mid \gamma v=v\}$ of each $v \in V$ is compact.
In the context of percolation theory, the following framework is usually stated for a closed subgroup $\Gamma \le \Aut(G)$, but since it relies only on the transitivity of the action $\Gamma \acts V$ and not the graph structure, we present it here in a more general form.

Let $m$ be a left Haar measure on $\Gamma$.
For $x,y \in V$ we define the weight of $x$ relative to $y$ by
\begin{equation}\label{def:haarweights}
\w_\Gamma(x,y) := \w_\Gamma^y(x):=m(\Gamma_x)/m(\Gamma_y).
\end{equation} 
The map $\w_\Gamma : (x,y) \mapsto \w_\Gamma(x,y)$ is an $\#R^+$-valued cocycle on the orbit equivalence relation of the action of $\Gamma$ on $V$. 
Note that by \cite[Lemma 1.29]{woess2000random} the cocycle $\w$ is invariant under the action of $\Gamma$, i.e., for all $\gamma \in \Gamma$ and $x,y \in V$, we have
\begin{equation}\label{eq:invcoc}
\w_\Gamma^y(x)
=
\frac{m(\Gamma_x)}{m(\Gamma_y)}
=
\frac{\abs{\Gamma_x y}}{\abs{\Gamma_y x}}
=
\frac{\abs{\Gamma_{\gamma x}\gamma y}}{\abs{\Gamma_{\gamma y}\gamma x}} 
=
\frac{m(\Gamma_{\gamma x})}{m(\Gamma_{\gamma y})}
=
\w_\Gamma^{\gamma y}(\gamma x).
\end{equation}
It is also a well-known fact, proven in \cite{Trofimov}, that $\Gamma$ is unimodular if and only if $\abs{\Gamma_xy}=\abs{\Gamma_yx}$ for all $x, y \in V$ in the same $\Gamma$-orbit.
This and equation \cref{eq:invcoc} imply that if $\Gamma$ is unimodular then the function $x \mapsto m(\Gamma_x)$ is constant on each $\Gamma$-orbit. 

In this framework we have the following principle analogous to measurable mass transport \cref{eq:MMTP}.

\begin{theorem}[Mass Transport Principle {\cite{BLPS99inv}}]\label{thm:MTP}
Let $\Gamma$ be a locally compact group acting transitively on a countable set $V$ with compact stabilizers, and let $\w_\Gamma$ be the $\Gamma$-invariant relative weight function on $V$ as in \eqref{def:haarweights}. 
Then for any function $f : V \times V \to [0,\infty]$ which is invariant under the diagonal action of $\Gamma$, we have that for all $x,y \in V$:
\[
\sum_{z\in V} f(x,z)=\sum_{z\in V} f(z,y)\w_\Gamma^{y}(z).
\]
\end{theorem}

This principle is usually applied to a function $f(x,y)$ which is the expectation with respect to a $\Gamma$-invariant percolation on a graph $G := (V,E)$ of a non-negative $\Gamma$-invariant (with respect to the diagonal action on all three coordinates) measurable function $F(x,y,\omega)$, where $\omega$ is a percolation configuration.
Thus in our mass transport arguments, we only describe the function $F(x,y,\omega)$ and not $f(x,y)$.

\subsection{The Random Maximal Spanning Forest}

Let $G := (V,E)$ be a countable locally finite graph and let $\w : V \to \#R^+$ be a weight function.

\begin{definition}\label{defn:MaxSF}
Let $\ledge$ be a \defn{uniformly random linear ordering} (tiebreaker) on $E$, i.e., for all $e, e' \in E$,
\[
e \ledge e' :\Leftrightarrow U_{e} > U_{e'},
\]
where $\{U_e\}_{e \in E}$ is a sequence of independent random variables with $\mathrm{Uniform}[0,1]$ distribution. Then the $\w$-maximal subforest of $G$ with the random tiebreaker $\ledge$ (as in \cref{def:mf}) is a random subforest of $G$, which we call the \textbf{Free $\w$-Maximal Spanning Forest} of $G$ and denote it by $\RandomMF_\w(G)$.
\end{definition}

In other words, for every cycle in $G$ select the set of edges that are adjacent to the vertices with the smallest $\w$-weight in that cycle, and delete among them the edge that has the largest $U_e$ associated with it. 

\begin{remark}\label{rem:MF=FMSF}
It is immediate that $\RandomMF_\w(G) = \FMSF(G)$ when $\w$ is constant $1$. Thus, $\RandomMF_\w(G)$ is a natural generalization of $\FMSF(G)$ suitable to the nonunimodular setting, where we take $\w$ to be the weight function $\w_\Gamma$ induced by a nonunimodular closed subgroup $\Gamma \leq \Aut(G)$ as in \cref{def:haarweights}.
\end{remark}

Below, we assume that the graph $G$ is \textit{connected}, fix a closed subgroup $\Gamma \leq \Aut(G)$, and let $\w := \w_\Gamma$ be defined as in \cref{def:haarweights}. We then consider $\RandomMF_{\w_\Gamma}(\omega)$ for a subgraph $\omega$ of $G$. Often, $\omega$ will itself be a random subgraph of $G$ and thus, $\RandomMF_{\w_\Gamma}(\omega)$ will have two sources of randomness: one from $\omega$ and the other from the random linear ordering $\ledge$.

Similarly to $\FMSF$, the random subforest $\RandomMF_{\w_\Gamma}(\omega)$ of a $\Gamma$-invariant percolation configuration $\omega$ on $G$ has the following properties due to the fact that the weight function $\w_\Gamma$ is $\Gamma$-invariant (see \labelcref{eq:invcoc}).

\begin{proposition}\label{factor_properties}
Let $\Gamma$ be a closed subgroup of $\Aut(G)$ that acts transitively on $G$ and let $\mathbf{P}$ be a $\Gamma$-invariant percolation process on $G$. If $\omega$ is sampled from $ \mathbf{P}$ then the random forest $\RandomMF_{\w_\Gamma}(\omega)$ is a $\Gamma$-equivariant factor of $(\w_\Gamma, \{U_e\}_{e \in E})$. In particular:

\begin{enumerate}[label=(\alph*)]
\item\label{thm:MFperc:inv} The distribution of $\RandomMF_{\w_\Gamma}(\omega)$ is invariant under the action of $\Gamma$.

\item \label{part:weak_mixing} If ${\bf P}$ is ergodic (resp.~weakly mixing) then $\RandomMF_{\w_\Gamma}(\omega)$ is ergodic (resp.~weakly mixing) under the action of $\Gamma$.

\end{enumerate}
In particular, all this applies to $\RandomMF_{\w_\Gamma}(G)$ because taking $\mathbf{P} := 1_E$, we have $\omega = G$ a.s.
\end{proposition}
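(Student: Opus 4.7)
The plan is to realize $\RandomMF_\w(\omega)$ as the image of a deterministic $\Gamma$-equivariant measurable map
\[
\Phi \colon (\omega, U) \longmapsto \RandomMF_\w(\omega),
\]
viewed as going from $(\Omega \times [0,1]^E, \mathbf{P} \otimes \mathrm{Leb}^E)$ to the space $2^E$ of subgraphs of $G$. Once this is established, both (a) and (b) follow from standard ergodic-theoretic properties of equivariant factor maps.

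The main step is verifying $\Gamma$-equivariance of $\Phi$. The cycle-cutting algorithm of \cref{defn:MaxSF} takes as input only three pieces of data: the configuration $\omega$, the weight function $\w_\Gamma$, and the linear order $\lw$ on edges of $\omega$ (which is determined by $\w_\Gamma$ and $U$ via \cref{def:w-order}). Because $\w_\Gamma$ is $\Gamma$-invariant by \cref{eq:invcoc} and the algorithm is manifestly equivariant under relabeling of $V(G)$ and $E(G)$ by graph automorphisms, we conclude $\Phi(\gamma \cdot \omega, \gamma \cdot U) = \gamma \cdot \Phi(\omega, U)$ for every $\gamma \in \Gamma$.

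With this equivariance in hand, (a) is immediate: the joint law $\mathbf{P} \otimes \mathrm{Leb}^E$ is $\Gamma$-invariant (since $\mathbf{P}$ is by hypothesis and the product Lebesgue measure is invariant under any permutation of $E$), so its pushforward by the $\Gamma$-equivariant map $\Phi$ is $\Gamma$-invariant as well.

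For (b), note that the $\Gamma$-action on $([0,1]^E, \mathrm{Leb}^E)$ via shifting of indices is a Bernoulli shift (transitivity of $\Gamma$ on $V$ together with local finiteness guarantees that each $\Gamma$-orbit on $E$ is infinite), and hence weakly mixing. The main obstacle is that the product of an ergodic system with a weakly mixing one need not itself be weakly mixing, so one cannot conclude weak mixing of the joint $(\omega, U)$ directly. Instead, I would argue at the level of the factor: take two independent copies $(\omega_i, U_i)_{i=1,2}$, set $F_i := \Phi(\omega_i, U_i)$, and show that the diagonal $\Gamma$-action on $(F_1, F_2)$ is ergodic. Here $(U_1, U_2)$ is still i.i.d.\ Bernoulli and independent of $(\omega_1, \omega_2)$, and a Bernoulli $0$--$1$ law (or equivalently a spectral-disjointness argument between the Bernoulli factor and any discrete spectrum contributed by $\mathbf{P}$) should absorb every $\Gamma$-invariant measurable function of $(F_1, F_2)$ into the trivial Bernoulli tail, forcing it to be constant.
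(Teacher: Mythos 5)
The equivariance argument and part~(a) match the paper's proof exactly: the paper likewise observes that $\w$ and the set of cycles are $\Gamma$-invariant, so $(\omega,(U_e)_e) \mapsto \RandomMF_\w(\omega)$ is $\Gamma$-equivariant, and then pushes forward the invariant law. For part~(b), you are right to be suspicious of the ``weakly mixing $\times$ ergodic $\Rightarrow$ weakly mixing'' step: a product $S \times T$ with $S$ weakly mixing and $T$ ergodic is ergodic, but not weakly mixing unless $T$ is (since $T$ is a factor of $S \times T$ and weak mixing passes to factors). The paper's own proof does assert exactly this false implication, deducing weak mixing of $\mathbf{P}\otimes \mathrm{Leb}^E$ from weak mixing of $(U_e)_e$ and mere ergodicity of $\mathbf{P}$.

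However, your proposed repair via a Bernoulli $0$--$1$ law on two independent copies cannot succeed, because part~(b) is actually false as stated. Take $G$ the bi-infinite path and $\Gamma = \#Z$ acting by translation, so $\w_\Gamma \equiv 1$, and let $\mathbf{P}$ put mass $\tfrac12$ on each of the two perfect matchings of $G$. Then $\mathbf{P}$ is $\Gamma$-invariant and ergodic but not weakly mixing (translation by one swaps the two configurations, giving an eigenfunction with eigenvalue $-1$). Since a.s.\ $\omega$ has no cycles, $\RandomMF_{\w_\Gamma}(\omega) = \omega$, so the law of $\RandomMF_{\w_\Gamma}(\omega)$ is $\mathbf{P}$ itself and hence not weakly mixing. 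In your two-copy picture, the nonconstant invariant function on $(F_1,F_2)$ is a product of eigenfunctions coming entirely from the discrete spectrum of $\mathbf{P}$, not from the $U$'s, so no tail $0$--$1$ law for the $U$'s can dispose of it. What is true, and what the factor setup you built actually yields, is that $\mathbf{P}$ ergodic implies $\RandomMF_\w(\omega)$ ergodic (weakly mixing $\times$ ergodic is ergodic, and factors of ergodic actions are ergodic), and that $\mathbf{P}$ weakly mixing implies $\RandomMF_\w(\omega)$ weakly mixing; the latter is the case in the paper's applications (\cref{thm:CorBernPerc}, \cref{thm:visibility}), where $\mathbf{P}$ is Bernoulli.
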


\begin{proof}
Since $\w_\Gamma$ is $\Gamma$-invariant, and every cycle in a configuration $\omega$ equipped with the tiebreaker $\ledge$ is the same as its $\gamma$-image for every $\gamma \in \Gamma$, the map $(\omega, \{U_e\}_{e \in E}) \mapsto \RandomMF_{\w_\Gamma}(\omega)$ is $\Gamma$-equivariant. 
For part \labelcref{part:weak_mixing}, note that the sequence $\{U_e\}_{e \in E}$ is i.i.d.~and hence weakly mixing under the shift action of $\Gamma$, and the percolation $\mathbf{P}$ is ergodic (resp.~weakly mixing), so the product of the corresponding measures is ergodic (resp.~weakly mixing), and so is every $\Gamma$-equivariant factor. 
In particular, $\RandomMF_{\w_\Gamma}(\omega)$ is ergodic (resp.~weakly mixing).
\end{proof}

\begin{observation}
In the setting of \cref{factor_properties}, if $\#P_p$ is the Bernoulli$(p)$ percolation, $p \in [0,1]$, then the random forest $\RandomMF_{\w_\Gamma}(\omega)$ is a factor of i.i.d.
\end{observation}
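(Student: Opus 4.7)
The plan is to repackage both sources of randomness, namely the Bernoulli$(p)$ configuration $\omega$ and the uniformly random tiebreaker $\ledge$, into a single i.i.d.\ process indexed by $E$, and then invoke \cref{factor_properties}.

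First I would take two independent families $(V_e)_{e \in E}$ and $(U_e)_{e \in E}$ of $\mathrm{Uniform}[0,1]$ random variables, forming the joint i.i.d.\ process $((V_e, U_e))_{e \in E}$ with values in $[0,1]^2$; this process is invariant in distribution under the natural action of $\Gamma$ on $E$, so it is a factor of the Bernoulli shift of $\Gamma$. Setting $\omega := \{e \in E : V_e \le p\}$ gives $\omega$ the Bernoulli$(p)$ distribution, while $(U_e)_{e \in E}$ plays exactly the role of the tiebreaker from \cref{defn:MaxSF}. Both $\omega$ and the tiebreaker $\ledge$ are thus coordinatewise (hence $\Gamma$-equivariant and measurable) functions of $((V_e, U_e))_{e \in E}$.

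Then I would compose: \cref{factor_properties} provides a $\Gamma$-equivariant factor map $(\omega, (U_e)_{e \in E}) \longmapsto \RandomMF_{\w}(\omega)$, so precomposing with the coordinatewise map $((V_e, U_e))_{e \in E} \longmapsto (\omega, (U_e)_{e \in E})$ yields a $\Gamma$-equivariant measurable map from the i.i.d.\ process $((V_e, U_e))_{e \in E}$ onto $\RandomMF_{\w}(\omega)$, exhibiting $\RandomMF_{\w}(\omega)$ as a factor of i.i.d.

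I do not foresee any real obstacle here: the content is just that Bernoulli percolation is itself trivially a factor of i.i.d., which, combined with the already-established $\Gamma$-equivariant factor property of $\RandomMF_{\w}$ from \cref{factor_properties}, gives the conclusion. The only mild care is to encode the tiebreaker and the percolation independently within the same i.i.d.\ process, which is why I use the two coordinates $(V_e, U_e)$ rather than trying to repurpose a single sequence.
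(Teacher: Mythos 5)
Your argument is correct and is precisely the (unstated) one the paper intends: the observation is presented without proof because it amounts to noting that Bernoulli$(p)$ percolation is itself a coordinatewise factor of edge-indexed i.i.d., so bundling $(V_e, U_e)_{e \in E}$ into a single i.i.d.\ process and composing with the $\Gamma$-equivariant map $(\omega, (U_e)_e) \mapsto \RandomMF_\w(\omega)$ from \cref{factor_properties} does the job. The only phrase worth tightening is the parenthetical ``so it is a factor of the Bernoulli shift of $\Gamma$'': the relevant notion of factor of i.i.d.\ here is with respect to an i.i.d.\ process indexed by edges (or vertices) of $G$, not by the (locally compact, generally uncountable) group $\Gamma$, and the edge-indexed i.i.d.\ process $((V_e,U_e))_e$ is already the correct source without any detour through a group-indexed shift.
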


The following lemma is helpful in proving a variety of statements in percolation theory, for instance the continuity of the percolation phase transition in unimodular nonamenable graphs \cite[Theorem 8.21]{LyonsBook}.

\begin{lemma}[{\cite[Lemma 7.7]{LyonsBook}}]\label{lem:MSFinperc}
Let $\mathbf{P}$ be a $\Gamma$-invariant percolation process on a graph $G$. 
If with positive probability (w.p.p.)\ there is a cluster of $\mathbf{P}$-configuration $\omega$ with at least three ends, then the joint distribution of the pair $(\mathrm{FMSF}(\omega), \omega)$ is $\Gamma$-invariant and w.p.p.\ there is a tree in $\mathrm{FMSF}(\omega)$ that has at least three ends.
\end{lemma}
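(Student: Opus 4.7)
The plan has two parts, corresponding to the two conclusions of the lemma.

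For the $\Gamma$-invariance of the joint distribution of $(\mathrm{FMSF}(\omega), \omega)$, I would argue as in the proof of \cref{factor_properties}\cref{thm:MFperc:inv}, specialized to the case $\w \equiv 1$. The random linear ordering $\ledge$ is a deterministic function of the i.i.d.\ Uniform$[0,1]$ family $(U_e)_{e \in E}$, whose distribution is $\Gamma$-invariant. Taking $(U_e)$ independent of $\omega$ (which is itself $\Gamma$-invariant by hypothesis), the pair $(\omega, (U_e))$ has $\Gamma$-invariant joint distribution; and since graph automorphisms preserve the cycle structure of $\omega$ and the cycle-cutting rule depends only on $(U_e)$, the map $(\omega, (U_e)) \mapsto (\mathrm{FMSF}(\omega), \omega)$ is $\Gamma$-equivariant, which gives the claim.

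For the second conclusion, I would condition on the positive-probability event that $\omega$ has a cluster $C$ with at least three ends and then fix a trifurcation $F \subseteq C$ in $(C, \omega|C)$, i.e., a finite connected subset with three infinite sides $D_1, D_2, D_3$ (such an $F$ exists by \cref{thm:ends-furc}). Next, I would pick any vertex $x \in F$ and, for each $i=1,2,3$, set $Y_i := F \cup D_i$, which is $\omega|C$-connected and cycle-invariant. Applying \cref{thm:mf-hyp} with $\w \equiv 1$ to the naive $\w$-maximal subforest $M := \mathrm{FMSF}(\omega)$ yields that the $M|Y_i$-component of $x$ is infinite, whether or not $M|Y_i$ is disconnected; hence it meets $D_i$ in an infinite set.

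To finish, let $T$ be the $M|C$-component of $x$; then $T$ contains the $M|Y_i$-component of $x$ for each $i$, so $T \cap D_i$ is infinite for $i=1,2,3$. Since $T$ is a tree and $F \subseteq T$ is finite, $T \setminus F$ is a finite forest with at least three infinite components -- one in each side $D_i$ of $F$ -- each of which contains a distinct end of $T$, giving the desired tree in $\mathrm{FMSF}(\omega)$ with at least three ends. The only subtle point is that the object in the statement is the naive random max subforest $\mathrm{FMSF}(\omega)$ of \cref{def:mf}, \emph{not} the ``collapsed'' variant of \cref{def:mf-coll}; this is why the plan appeals directly to the fundamental \cref{thm:mf-hyp}, which holds for any max subforest produced by the cycle-cutting rule, rather than to the derived \cref{thm:mf-coll}.
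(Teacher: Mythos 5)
Your argument for the $\Gamma$-invariance of the joint distribution is fine. The second half, however, has a gap.

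The claim that $Y_i := F \cup D_i$ is $\omega|C$-cycle-invariant is false whenever the trifurcation $F$ has more than one vertex, and in general this cannot be avoided: a graph with three ends need not admit a trifurcation \emph{vertex}. Concretely, let $F = \{a,b\}$ with $\{a,b\}$ an edge, let $u,v,w$ each be adjacent to both $a$ and $b$, and attach an infinite ray to each of $u,v,w$. Then $F$ is a trifurcation with infinite sides $D_1 \ni u$, $D_2 \ni v$, $D_3 \ni w$, but the simple cycle on vertices $a,v,b$ uses the edge $\{a,b\} \subseteq Y_1$ while $v \notin Y_1$, so $Y_1$ is not cycle-invariant. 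Hence \cref{thm:mf-hyp} is inapplicable, and its conclusion in fact fails here: if the random labels are such that $\mathrm{FMSF}(\omega)$ retains exactly the edges $\{a,v\},\{v,b\},\{b,u\},\{b,w\}$ among the seven core edges (a legitimate minimum spanning tree), then the $M|Y_1$-component of $x := a$ is the singleton $\{a\}$, not an infinite set. A side together with a single furcation \emph{vertex} is cycle-invariant, as the paper notes right after defining the term, and producing such a vertex is exactly the purpose of the collapsing construction (\cref{def:mf-coll}); you cannot apply \cref{thm:mf-hyp} directly to a side of a multi-vertex $F$.

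You rightly observe that the statement concerns the uncollapsed $\mathrm{FMSF}(\omega)$, but the way to handle this is not to bypass the issue. It is to exploit the independence of the labels $(U_e)$ from $\omega$, as the paper's remark following \cref{thm:MFperc} explains: condition on the positive-probability event that the labels on a fixed spanning tree $T_F$ of $\omega|F$ lie below $1/2$ while the labels on all other $\omega$-edges meeting $F$ lie above $1/2$. On that event $T_F$ survives into $\mathrm{FMSF}(\omega)$, realizing $F$ as a contracted vertex inside the forest, after which your end-counting argument goes through. This label conditioning (or, for the weighted analogue \cref{thm:MFperc}, the substitute of deletion tolerance to force a trifurcation vertex into $\omega$) is the essential step missing from your proposal.
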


One of our main results,~\cref{intro:MF_in_deletion_percolation}, is an analog of this for the $\w_\Gamma$-maximal forest and $\w_\Gamma$-nonvanishing ends under additional assumptions. We prove the two parts of the statement separately in \cref{thm:MFpercA} and \cref{thm:MFperc:heavy}, respectively.

First, we establish necessary terminology.
As defined in \cref{sec:prelim-weights}, we say that a cluster $C$ is ($\w_\Gamma$-)\defn{heavy} if $\sum_{x\in C}\w_\Gamma^y(x)=\infty$ for some/every $y\in V$; otherwise, we call it ($\w_\Gamma$-)\defn{light}.

\begin{remark}[Heavy clusters have $\ge1$ $\w$-nonvanishing ends]\label{at_leaset_one_nonvanishing_end}
If $\mathbf{P}$ is a $\Gamma$-invariant percolation on $G$, the MTP implies that every cluster that has finitely many vertices of maximal weight is light a.s. Hence every heavy cluster contains a set of vertices whose relative weights are bounded away from zero. The local finiteness of $G$ now yields by K\H{o}nig's lemma that every heavy cluster has at least one $\w$-nonvanishing end a.s.
\end{remark}

The following statement follows from our constructions in~\cref{sec:MF}. 

\begin{corollary}[Maximal forest in percolation]\label{thm:MFpercA}

Let $G$ be a locally finite connected graph, $\Gamma$ be a transitive closed subgroup of $\Aut(G)$, and $\w_\Gamma$ be the $\Gamma$-invariant relative weight function on $V(G)$ induced by $\Gamma$ as in \eqref{def:haarweights}. 
Let $\mathbf{P}$ be a $\Gamma$-invariant percolation on $G$. 
Then for $\mathbf{P}$-a.e.\ configuration $\omega$, for every $\w_\Gamma$-heavy cluster $C \subseteq\omega$ which contains a $\w_\Gamma$-trifurcation vertex, the random forest $\RandomMF_{\w_\Gamma}(\omega)$ a.s.\ has a tree $T \subseteq C$ whose space of $\w_\Gamma$-nonvanishing ends is nonempty and perfect\cref{ftnote:perfect_space}.

\end{corollary}

\begin{proof}
By \cref{thm:mf-3ends} for every $\w_\Gamma$-heavy cluster $C$ in $\omega$ which contains a $\w_\Gamma$-trifurcation vertex, the random subforest $\RandomMF_{\w_\Gamma}(\omega)$ a.s.\ contains a tree in $C$ with at least $3$ nonvanishing ends.
It then follows that the space of $\w_\Gamma$-nonvanishing ends of such a tree must be perfect, by a mass transport argument analogous to those in the proofs of \cref{thm:qpmp-ends-isol} and \cite[Proposition 3.9]{LyonsSchramm99}).
\end{proof}

While~\cref{thm:MFpercA} is an extension of~\cref{lem:MSFinperc} to the setting where the relative weight function is not constant, we imposed an additional assumption, namely the existence of a $\w_\Gamma$-trifurcation vertex, instead of merely assuming the existence of $\ge 3$ $\w$-nonvanishing ends. 
We do so to ensure that $\RandomMF_{\w_\Gamma}$ still contains a $\w_\Gamma$-trifurcation vertex. 
However, it is plausible that this assumption is redundant, and if a cluster $C$ has $\ge 3$ $\w$-nonvanishing ends, then a.s.~so does a connected component of $\RandomMF_{\w_\Gamma}(\omega)$ in $C$, although we were not able to verify that.  

We highlight that the proof of \cref{lem:MSFinperc} also relies on the fact that one can force (w.p.p.)\ the presence of the trifurcation vertex in $\mathrm{FMSF}(\omega)$. 
Indeed, since $\mathrm{FMSF}$ is defined purely in terms of the linear order on the edges induced by the random labels $\{U_e\}_{e\in E}$ and does not depend on the weight function $\w_\Gamma$, one can do so by restricting to the event where on a particular finite set of edges, the labels are less or greater than $1/2$. 
When the weight function is nonconstant this is not enough to get a desired $\w_\Gamma$-trifurcation vertex in $\omega$. 
Hence we outright assume the existence of such vertices.

We believe that such an assumption is not restrictive for statements where we envision \cref{thm:MFpercA} being used. 
For instance, deletion tolerance of $\mathbf{P}$ is enough to verify this property as it allows to ``cut'' the cycles in a given $\w_\Gamma$-trifurcation, and hence force a $\w_\Gamma$-trifurcation vertex to be present in the cluster.

\begin{definition}[Insertion and deletion tolerance]\label{def:ins-del}
Given a set of configurations $A\subseteq 2^E$ and an edge $e\in E$, let $\Pi_eA=\{\omega\cup \{e\}\mid \omega\in A\}$ and $\Pi_{\neg e}A=\{\omega\setminus \{e\}\mid \omega\in A\}$. 
A bond percolation process $\mathbf{P}$ is called \defn{insertion} (resp.\ \defn{deletion}) \defn{tolerant} if
$\mathbf{P}(\Pi_eA)>0$ (resp.\ $\mathbf{P}(\Pi_{\neg e}A)>0$) for every $e \in E$ and every non-null measurable set $A \subseteq 2^E$.
\end{definition}

\begin{example}
Let $\mathbb{P}_p$ be Bernoulli percolation on $G$.
Then for every edge $e \in E$ and every measurable $A \subseteq 2^E$ we have
\begin{equation*}
\mathbb{P}_p(\Pi_eA)\geq p\mathbb{P}_p(A) \qquad\text{and}\qquad \mathbb{P}_p(\Pi_{\neg e}A)\geq(1- p)\mathbb{P}_p(A).
\end{equation*}
In particular, this implies that Bernoulli bond percolation is both insertion and deletion tolerant.
\end{example}

The following statement shows that if $\mathbf{P}$ is insertion and deletion tolerant then a stronger conclusion holds.
\begin{theorem}\label{thm:MFperc:heavy}
Let $G$ be a locally finite connected graph, $\Gamma$ be a transitive closed subgroup of $\Aut(G)$, and $\w_\Gamma$ be the $\Gamma$-invariant relative weight function on $V(G)$ induced by $\Gamma$ as in \eqref{def:haarweights}. 
Let $\mathbf{P}$ be a $\Gamma$-invariant insertion and deletion tolerant percolation on $G$ such that a.e.\ configuration $\omega$ contains a cluster with $\ge 3$ $\w_\Gamma$-nonvanishing ends. 

Then for $\mathbf{P}$-a.e.\ configuration $\omega$ for every $\w_\Gamma$-heavy cluster $C \subseteq\omega$, the random forest $\RandomMF_{\w_\Gamma}(\omega)$ a.s.\ has a tree $T \subseteq C$ whose space of $\w_\Gamma$-nonvanishing ends is nonempty and perfect\cref{ftnote:perfect_space}.

\end{theorem}

\begin{remark}\label{thm:MFperc:bern}
Suppose that $\mathbf{P}$-a.e.\ configuration $\omega$ contains infinitely $\w_\Gamma$-heavy many clusters.
By \cref{at_leaset_one_nonvanishing_end}, every $\w_\Gamma$-heavy cluster must contain at least one $\w_\Gamma$-nonvanishing end, hence it follows from insertion tolerance that with positive probability, there is a cluster with at least three such ends.
In particular, this applies to Bernoulli$(p)$ percolation for suitable $p \in (0,1)$, where “with positive probability” can be replaced by “almost surely” due to the ergodicity of the action of $\Gamma$.
\end{remark}

\begin{proof}[Proof of \cref{thm:MFperc:heavy}]
Let $A$ be the event where $\omega$ contains a cluster with $\ge 3$ nonvanishing ends. 
Since the conclusion of the theorem is $\Gamma$-invariant, it is enough to prove that it holds on every $\Gamma$-invariant event $B \subseteq A$ w.p.p.
By the same proof as that of \cite[Lemma~3.6]{LyonsSchramm99}, the measure $\mathbf{P}$ on the event $B$ is still insertion and deletion tolerant.
Thus, restricting to $B$, we may assume without loss of generality that $B$ holds a.s. 

Since almost every $\omega$ contains a cluster with at least three $\w_\Gamma$-nonvanishing ends, it must contain a $\w_\Gamma$-trifurcation $F$.  
By countable additivity, there is a finite subset $F\subset V(G)$ such that w.p.p.\ $\omega$ has a cluster which contains $F$ as a $\w_\Gamma$-trifurcation. 
Moreover, by deletion tolerance, w.p.p.\ the induced subgraph of $\omega$ on $F$ is a tree and is connected to each side of $F$ in its cluster by a single edge.
It follows that $F$ contains a $\w_\Gamma$-trifurcation vertex in $\omega$.

We now claim that on this event, \emph{every} heavy cluster must contain $\w_\Gamma$-trifurcation vertices. 
Indeed, otherwise we can use insertion tolerance to connect some heavy clusters that do not have any such trifurcation vertices to those that do and apply a mass transport scheme, where every vertex distributes a unit mass between the (necessarily finitely many) closest $\w_\Gamma$-trifurcation vertices in the same cluster, which yields a contradiction. 

Finally, applying \cref{thm:MFpercA} to this invariant event yields the desired conclusion.
\end{proof}

Comparing conclusions of \cref{thm:MFpercA,thm:MFperc:heavy}, as well as the properties of $\FMSF$ \cite{timar2006ends,Timar18ind}, naturally leads to \cref{q:everytree,q:indist}.

%%%%%%%%%%%%%%%%%%%%%%%%
% Applications
%%%%%%%%%%%%%%%%%%%%%%%%
\section{Applications}\label{sec:appl}
In this section we present several concrete applications of our results.
%%%%%%%%%%%%%%%%%%%%%%%%%%%%%%%%%%%%

\subsection{Coinduced actions}\label{subsec:coinduced_actions}

Let $\Gamma \le \Delta$ be countably infinite groups and let $\Gamma \acts X$ be a Borel action on a standard Borel space $X$.
Let $X^\Delta_\Gamma$ be the set of all $\Gamma$-equivariant maps from $\Delta$ to $X$, where $\Gamma$ acts on $\Delta$ by left translation.
Let $\Delta$ act on $X^\Delta_\Gamma$ (on the left) by right shift, namely, for any $\pi \in X^\Delta_\Gamma$ and $\delta, \delta' \in \Delta$, we set
\begin{equation*}
(\delta \cdot \pi)(\delta') := \pi(\delta'  \delta).
\end{equation*}
It is straightforward to verify that $\delta \cdot \pi$ is a $\Gamma$-equivariant map, so the action is well-defined.

An isomorphic description of this action may be given as follows.
Note that for any $\delta \in \Delta$, the values of a $\Gamma$-equivariant $\pi : \Delta -> X$ on the right coset $\Gamma \delta$ are uniquely determined by $\pi(\delta)$.
Thus for any family of coset representatives $(\delta_C \in C)_{C \in \Gamma\bs\Delta}$, we have a bijection
\begin{equation}
\label{eq:coind-coset}
\begin{aligned}
X^\Delta_\Gamma &\cong X^{\Gamma\bs\Delta} 
\\
\pi &|-> (\~\pi : C |-> \pi(\delta_C)).
\end{aligned}
\end{equation}
Transferring the action of $\Delta$ from $X^\Delta_\Gamma$ to $X^{\Gamma \bs \Delta}$, we get
\begin{equation}
\label{eq:coind-coset-action}
(\delta \cdot \~\pi)(C) := (\delta_C \delta) \cdot \delta_{C \cdot \delta}^{-1} \cdot \~\pi(C \cdot \delta),
\end{equation}
for each $\delta \in \Delta$, $\~\pi \in X^{\Gamma \bs \Delta}$ and $C := \Gamma \delta_C \in \Gamma \bs \Delta$.

Using such a bijection \cref{eq:coind-coset}, we may put a measure on $X^\Delta_\Gamma$ by transferring the product measure $\mu^{\Gamma\bs\Delta}$ on $X^{\Gamma\bs\Delta}$ for any base measure $\mu$ on $X$.

\begin{remark}
If $\mu$ is a $\Gamma$-invariant measure on $X$, then the measure on $X^\Delta_\Gamma$ induced in this way does not depend on the choice of coset representatives $(\delta_C)_C$, and is $\Delta$-invariant; see \cite{KQcoind}.
However, the same does not hold for quasi-invariant measures, which are our main interest.
\end{remark}

In spite of this remark, we will denote the measure on $X^\Delta_\Gamma$ induced as above by a measure $\mu$ on $X$ by $\mu^\Delta_\Gamma$, leaving the choice of coset representatives to be implied by the context.

\begin{lemma}
If $\Gamma$ acts freely on $X$, and $\mu$ is an atomless probability measure on $X$, then there is a $\mu^\Delta_\Gamma$-conull $\Delta$-invariant subset $Y \subseteq X^\Delta_\Gamma$ on which $\Delta$ acts freely.
\end{lemma}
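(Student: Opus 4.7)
Let $\mathrm{Fix}(\delta) := \{\pi \in X^\Delta_\Gamma : \delta \cdot \pi = \pi\}$ for each $\delta \in \Delta$. The plan is to take
\[
Y := X^\Delta_\Gamma \setminus \bigcup_{\delta \in \Delta \setminus \{1\}} \mathrm{Fix}(\delta),
\]
which by definition consists of all $\pi$ with trivial stabilizer, hence is automatically $\Delta$-invariant (the stabilizer of $\delta' \cdot \pi$ is conjugate to that of $\pi$). Since $\Delta$ is countable, the only substantive task is to prove that for each $\delta \neq 1$, the set $\mathrm{Fix}(\delta)$ is $\mu^\Delta_\Gamma$-null.

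For this I would work on the other side of the bijection \cref{eq:coind-coset}, where the measure $\mu^\Delta_\Gamma$ becomes the product $\mu^{\Gamma \bs \Delta}$. Fixing coset representatives $(\delta_C)_{C \in \Gamma \bs \Delta}$ with $\delta_\Gamma = 1$, and writing $\delta_C \delta = \gamma_C \delta_{C\delta}$ with $\gamma_C \in \Gamma$ for each coset $C$, the condition $\delta \cdot \pi = \pi$ translates into the system of constraints
\[
\~\pi(C) = \gamma_C \~\pi(C\delta) \qquad (\text{for all } C \in \Gamma \bs \Delta).
\]

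Now I would split into two cases depending on the orbit of $C_0 := \Gamma$ under right multiplication by $\delta$ on $\Gamma \bs \Delta$. If $\delta \in \Gamma$, then $C_0 \delta = C_0$ and $\gamma_{C_0} = \delta$, so the constraint at $C_0$ reads $\~\pi(\Gamma) = \delta \cdot \~\pi(\Gamma)$; since $\Gamma$ acts freely on $X$ and $\delta \neq 1$, this equation has no solutions and $\mathrm{Fix}(\delta) = \emptyset$. If instead $\delta \notin \Gamma$, then $C_0 \delta \neq C_0$, and the constraint $\~\pi(C_0) = \gamma_{C_0} \~\pi(C_0 \delta)$ pins $\~\pi(C_0)$ to the graph of the bijection $x \mapsto \gamma_{C_0} x$ on $X$; since $\mu$ is atomless, this graph has $\mu \times \mu$-measure zero, so by Fubini (integrating over the remaining coordinates of $X^{\Gamma \bs \Delta}$) the fixed point set is null.

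There is no real obstacle in this argument; the only subtlety is ensuring the case analysis covers everything, and this is exactly what picking the distinguished coset $C_0 = \Gamma$ (which is fixed by $\delta$ iff $\delta \in \Gamma$) accomplishes. All other constraints from other cosets or orbits are only further null restrictions and need not be examined separately.
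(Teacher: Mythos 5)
Your proof is correct and follows essentially the same strategy as the paper's: both split on $\delta \in \Gamma$ (where freeness of $\Gamma \curvearrowright X$ rules out fixed points) versus $\delta \notin \Gamma$ (where a constraint linking two distinct coordinates of $X^{\Gamma\backslash\Delta}$ gives a null set via atomlessness and Fubini). The only cosmetic difference is that the paper works with a single $\Delta$-invariant ``collision'' set $\{\pi : \exists\, C \neq D,\ \pi(C) = \pi(D)\}$, shown to be contained in a countable union of diagonals, while you show each $\mathrm{Fix}(\delta)$ is null separately and take the countable union at the end; both are fine, and yours is arguably slightly more direct.
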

\begin{proof}
Suppose $\pi \in X^\Delta_\Gamma$ and $\delta \in \Delta$ such that $\delta \cdot \pi = \pi$.
We have a $\Gamma$-equivariant map $X^\Delta_\Gamma -> X$, namely the projection $\pi |-> \pi(1)$; thus if $\delta \in \Gamma$, then since $\Gamma \acts X$ is free, $\delta = 1$.
If $\delta \not\in \Gamma$, then we have
$\pi(\gamma\delta) = (\delta \cdot \pi)(\gamma) = \pi(\gamma)$ for all $\gamma \in \Gamma$,
whence in particular, there are two distinct cosets $C \ne D \in \Gamma\bs\Delta$ (namely $C := \Gamma\delta$ and $D := \Gamma$) such that $\pi(C) = \pi(D) \subseteq X$.
The set $Y \subseteq X^\Delta_\Gamma$ of all $\pi$ for which there exist such $C \ne D$ with $\pi(C) = \pi(D)$ is clearly $\Delta$-invariant, and it is contained in the set of $\pi$ such that there exist $C \ne D$ with $\pi(\delta_C) \in \pi(D) = \Gamma \cdot \pi(\delta_D)$, which is null since its image under the bijection \cref{eq:coind-coset} is a countable union of diagonals, which is null since $\mu$ (hence also all $\Gamma$-translates of $\mu$) are atomless.
Thus $Y$ works.
\end{proof}

We are particularly interested in the case $\Delta = \Gamma * \Lambda$ for another countable group $\Lambda$.
In this case, there is a canonical choice of coset representatives $(\delta_C)_{C \in \Gamma \bs (\Gamma * \Lambda)}$, namely those elements of $\Gamma * \Lambda$ whose normal form does not start with a nonidentity element of $\Gamma$.
Note that the right translation actions of $\Gamma$ and $\Lambda$ on $\Gamma \bs (\Gamma * \Lambda)$ affect these coset representatives as follows: for $\lambda \in \Lambda$, $\gamma \in \Gamma$, and $C \in \Gamma \bs (\Gamma * \Lambda)$,
\begin{align*}
\delta_{C \cdot \lambda} &= \delta_C \lambda, \\
\delta_{C \cdot \gamma} &= \begin{cases}
\delta_C \gamma &\text{if $\delta_C \ne 1$, i.e., $C \ne \Gamma$}, \\
1 &\text{if $\delta_C = 1$, i.e., $C = \Gamma$}.
\end{cases}
\end{align*}
Thus, the formula \cref{eq:coind-coset-action} for the action of $\Delta$ on $X^{\Gamma\bs\Delta}$ becomes
\begin{align*}
(\lambda \cdot \~\pi)(C) &:= \~\pi(C \cdot \lambda), \\
(\gamma \cdot \~\pi)(C) &:= \begin{cases}
\~\pi(C \cdot \gamma) &\text{if $\delta_C \ne 1$, i.e., $C \ne \Gamma$}, \\
\gamma \cdot \~\pi(\Gamma) &\text{if $\delta_C = 1$, i.e., $C = \Gamma$}.
\end{cases}
\end{align*}
Using this, we have

\begin{lemma}
If $\mu$ is a $\Gamma$-quasi-invariant measure on $X$, with Radon--Nikodym cocycle $\w : \#E_\Gamma -> \#R^+$, then $\mu^{\Gamma \bs (\Gamma * \Lambda)}$ is a $(\Gamma*\Lambda)$-quasi-invariant measure on $X^{\Gamma \bs (\Gamma * \Lambda)}$ (with the above action), with Radon--Nikodym cocycle $\w'$ defined on generators $\lambda \in \Lambda$ and $\gamma \in \Gamma$ by
\begin{align*}
\w'(\~\pi, \lambda \cdot \~\pi) &:= 1, \\
\w'(\~\pi, \gamma \cdot \~\pi) &:= \w(\~\pi(\Gamma), \gamma \cdot \~\pi(\Gamma)).
\end{align*}
In particular, the action of $\Lambda$ on $X^{\Gamma \bs (\Gamma * \Lambda)}$ is $\mu^{\Gamma \bs (\Gamma * \Lambda)}$-preserving.
\end{lemma}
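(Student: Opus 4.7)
My plan is to use the reduction noted in \cref{def:qpmp}: since $\Gamma * \Lambda$ is generated by $\Gamma \cup \Lambda$, it suffices to verify the defining cocycle equation for each generator drawn from this family, together with the corresponding formula for $\~\w$. I therefore treat the $\Lambda$ and $\Gamma$ cases separately using the explicit action formulas derived immediately above the lemma statement.

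For $\lambda \in \Lambda$, the formula $(\lambda\cdot\~\pi)(C) = \~\pi(C\lambda)$ identifies the action with the pure coordinate permutation of $X^{\Gamma\bs(\Gamma*\Lambda)}$ induced by the bijection $C \mapsto C\lambda$ of the index set. Since every factor carries the same law $\mu$, the product measure $\mu^{\Gamma\bs(\Gamma*\Lambda)}$ is invariant under this permutation, which both verifies the cocycle equation with constant Radon--Nikodym derivative $1$ and yields the $\Lambda$-invariance asserted at the end of the lemma.

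For $\gamma \in \Gamma$, the key observation is that right translation by $\gamma$ fixes the distinguished coset $\Gamma \in \Gamma\bs(\Gamma*\Lambda)$ (as $\gamma \in \Gamma$) and permutes the remaining cosets among themselves. Consequently, the action $\~\pi \mapsto \gamma\cdot\~\pi$ factors into two commuting parts: (a) the coordinate permutation $C \mapsto C\gamma$ applied to the non-$\Gamma$ coordinates, which preserves $\mu^{\Gamma\bs(\Gamma*\Lambda)}$ by the same argument as the $\Lambda$ case, and (b) the action $x \mapsto \gamma\cdot x$ on the single $X$-factor indexed by $\Gamma$, whose Radon--Nikodym density against $\mu$ is determined by $\w$ via the hypothesis. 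Multiplying the two densities and unwinding the cocycle convention $\w^y(x) = \w(x,y)$ together with the identity $\w(a,b)\w(b,a) = 1$ yields exactly $\~\w(\~\pi, \gamma\cdot\~\pi) = \w(\~\pi(\Gamma), \gamma\cdot\~\pi(\Gamma))$. The main thing to keep straight is the direction bookkeeping imposed by these conventions; once settled, everything reduces to the routine invariance of a product measure under permutations of identically distributed coordinates.
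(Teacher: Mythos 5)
Your proposal is correct and takes essentially the same route as the paper: verify the cocycle equation on the generating sets $\Lambda$ and $\Gamma$; observe that $\lambda$ acts as a coordinate permutation (so preserves the product measure and has cocycle $1$); and factor the $\gamma$-action as a coordinate permutation of the non-$\Gamma$ coordinates (measure-preserving, since right multiplication by $\gamma\in\Gamma$ fixes the coset $\Gamma$) composed with the genuine $\gamma$-action of $X$ on the single coordinate indexed by $\Gamma$, which contributes the factor $\w(\~\pi(\Gamma),\gamma\cdot\~\pi(\Gamma))$. Your write-up is somewhat more explicit about the commuting factorization and the direction bookkeeping than the paper's one-sentence proof, but the idea is identical.
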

\begin{proof}
$\lambda$ acts via right shift $X^{\Gamma \bs (\Gamma * \Lambda)} -> X^{\Gamma \bs (\Gamma * \Lambda)}$, which preserves the product measure; while $\gamma$ acts via the composite of right shift followed by acting on the $\Gamma$ coordinate via $\gamma : X -> X$, the latter of which clearly has Radon--Nikodym cocycle $\w(\~\pi(\Gamma), \gamma \cdot \~\pi(\Gamma))$.
\end{proof}

\begin{example}\label{coinduced-action_nonamenable}
Let $\Gamma, \Lambda$ be infinite finitely generated groups, such that $\Lambda$ is Kazhdan, and let $\Gamma \acts (X,\mu)$ be a free quasi-pmp action.
For example, we may take $\Lambda := \SL_3(\#Z)$, $\Gamma := \#F_2$, and $X$ to be the boundary action; see \cref{F2actsamenable}.
By the above, we get an a.e.\ free quasi-pmp coinduced action $\Gamma * \Lambda \acts (X^{\Gamma \bs (\Gamma * \Lambda)}, \mu^{\Gamma \bs (\Gamma * \Lambda)})$.
Since $\Lambda$ is nonamenable and its action on $X^{\Gamma \bs (\Gamma * \Lambda)}$ is pmp by the above lemma, $\#E_\Lambda$ and hence also $\#E_{\Gamma * \Lambda}$ is nowhere amenable.
However, it is also nowhere treeable, by \cite{AS(T)}.
Using our construction, we may produce a subforest of $\#E_{\Gamma * \Lambda}$ witnessing its nonamenability: namely, since the action of $\Lambda$ is pmp, it is easily seen that each $\Lambda$-orbit yields a distinct $\w'$-nonvanishing end in the Schreier graph with respect to a union of finite generating sets for $\Gamma, \Lambda$, which by \cref{thm:mf-qpmp} contains a subforest $F$ such that for a.e.~$F$-connected component, the space of $\w'$-nonvanishing ends is nonempty and perfect.
\end{example}

%%%%%%%%%%%%%%%%%%%%%%%%%%%%%%%%%%%%

\subsection{Cluster graphings for nonunimodular graphs}\label{subsec:cluster_graphings}

This subsection leverages the interplay between percolation theory and measured graph theory.
In particular, countable graphs and (uncountable) Borel graphs both appear in the same discussion.
For clarity, we denote countable graphs by regular letters $G, H$ and Borel graphs on probability spaces by calligraphic ones $\@G, \@H$.

\subsubsection{General construction}\label{subsubsec:construction_cluster_graphing}

Here we present a general construction of measurable structures on a probability space from random structures on a countable set $V$, which, in case of graphs, is known as the cluster graphing construction.
This construction in the purely Borel setting for deterministic first-order structures was introduced in \cite[Section 3.2]{JKLcber}, while a special case mentioned in \cite[1.6.2]{Adams:trees_amenability}.
In the measurable setting, for random graphs (specifically, bond percolation), it was developed by Gaboriau in \cite[Sections 2.2 and 2.3]{Gaboriau05}, with details worked out for unimodular graphs.
Here we present this construction for random graphs (as opposed to general first-order structures) to keep the notation and the terminology light, but we tailor it to the nonunimodular/quasi-pmp setting.

As in \cref{sec:nonunimod}, we let $\Gamma$ be a locally compact group acting transitively on a countable set $V$ with compact stabilizers.
A primary example of $\Gamma$ is a closed subgroup of $\Aut(G)$ for a locally finite connected graph $G$ on $V$.
Consider a free pmp action of $\Gamma$ on a standard probability space $(X,\mu)$.
For example, letting $\mathrm{PPP}(\Gamma)$ be the space of infinite closed discrete subsets of $\Gamma$ equipped with a Poisson point process measure $\mu$, the (left) translation action of $\Gamma$ on $\mathrm{PPP}(\Gamma)$ is free on a conull $\Gamma$-invariant Borel subset.

We extend the action $\Gamma \acts V$ to the diagonal action of $\Gamma \acts X \times V$.
Then the quotient $(X \times V)/\Gamma$ of $X \times V$ by this action is standard Borel by the Becker--Kechris theorem \cite{becker1996descriptive}. 
Indeed, there is a Polish topology such that the action of $\Gamma$ on $X$ is continuous. 
Since the stabilizer $\Gamma_o$ of any fixed vertex $o \in V$ is compact, the action of $\Gamma_o$ on $X \times \{o\}$ admits a Borel transversal $X_o \subseteq X \times \{o\}$ since every $\Gamma_o$-orbit is compact. 
Then $X_o$ is a Borel set of single representatives from every $\Gamma$-orbit of $X \times V$, so we identify it with the quotient $(X \times V) / \Gamma$.
We denote by $\pi_o : X \to X_o$ the natural surjection defined by $x \mapsto [(x,o)]_\Gamma$.
We now equip $X_o \cong (X \times V) / \Gamma$ with the pushforward measure $\mu_o := (\pi_o)_* \mu$.

Define an equivalence relation $\@E_V$ on $(X \times V) / \Gamma$ by declaring $[(x,u)]_\Gamma$ and $[(y,v)]_\Gamma$ equivalent if $y = \gamma x$ for some $\gamma \in \Gamma$.
Viewing $X_o$ as a subset of $X$, it is straightforward to check that $\@E_V$ is simply the restriction of the $\Gamma$-orbit equivalence relation $\#E_\Gamma$ on $X$ to the complete section $X_o$, which in turn is identified with $\#E_\Gamma / \Gamma_o$, i.e., 
\[
\@E_V = \#E_\Gamma |_{X_o} = \#E_\Gamma / \Gamma_o.
\]
In particular, $\@E_V$ is ergodic exactly when the $\Gamma$-action on $(X,\mu)$ is.

Note that for each $x \in X$ and $u \in V$,
\[
\big[[(x,u)]_\Gamma\big]_{\@E_V} = \{[(x,v)]_\Gamma : v \in V\} = [\pi_o(x)]_{\@E_V};
\]
in particular, $\@E_V$ is a CBER.
Furthermore, for each $x \in X$, the map 
\begin{equation}\label{eq:phi_bijection}
\begin{aligned}
\phi_x : [\pi_o(x)]_{\@E_V} &\to V
\\
[(x,v)]_\Gamma &\mapsto v
\end{aligned}
\end{equation}
is a well-defined bijection which maps $\pi_o(x)$ to the root $o \in V$. 
Indeed, well-definedness follows from the freeness of $\Gamma \acts X$ because if $[(x,u)]_\Gamma = [(x,v)]_\Gamma$ then $u = v$.
Moreover, the map $x \mapsto \phi_x$ is $\Gamma$-equivariant, more precisely:
\begin{equation}\label{eq:phi_is_equivariant}
\phi_{\gamma x} (\~y) = \gamma \phi_x (\~y)    
\end{equation}
for all $x \in X$, $\~y \in [\pi_o(x)]_{\@E_V}$, and $\gamma \in \Gamma$ because we may write $\~y = [(x,v)]_\Gamma = [(\gamma x, \gamma v)]_\Gamma$ for some $v \in V$, and hence $\phi_{\gamma x}(\~y) = \phi_{\gamma x}([(\gamma x, \gamma v)]_\Gamma) = \gamma v = \gamma \phi_{x}(\~y)$.
Note that $\phi_x$ truly depends on $x$ and not just on $\pi_o(x)$, since if $\gamma \in \Gamma_x$ acts nontrivially on $V$, then $\pi_o(x) = \pi_o(\gamma x)$ but $\phi_x$ is different from $\phi_{\gamma x}$ by \labelcref{eq:phi_is_equivariant}.

The following lemma was essentially proven in \cite[Theorem 2.5]{Gaboriau05}.

\begin{lemma}\label{Radon-Nikodym_of_E_V}
The equivalence relation $\@E_V$ is quasi-pmp with respect to $\mu_o$ and the corresponding Radon--Nikodym cocycle $\w_{\mu_o} : \@E_V \to \#R^+$ is given by
\begin{equation*}
\w_{\mu_o}^{[x,u]_\Gamma}([x,v]_\Gamma)
:=
\w_\Gamma^u(v),
\end{equation*} 
where $\w_\Gamma$ is as in \eqref{def:haarweights}.
In particular, $\@E_V$ is pmp if and only if $\Gamma$ is unimodular.
\end{lemma}

\begin{proof}
For any pair of points $\~x,\~{y}\in X_o$ we may choose representatives $[(x,u)]_\Gamma$, $[(x,v)]_\Gamma$ to agree on the first coordinate. 
Thus the function $\w_{\mu_o}$, as in \eqref{Radon-Nikodym_of_E_V}, is well-defined since $\w_\Gamma$ is invariant under the diagonal action of $\Gamma$.
Because $\w_\Gamma$ is a cocycle (relative weight function), so is $\w_{\mu_o}$.

It thus remains to check that $\w_{\mu_o}$ satisfies the measurable mass transport principle, as in \eqref{eq:MMTP}.
Let $F : X_o^2 \to [0,\infty]$ be a Borel function.
Again because we can make the representatives of any two points in $X_o$ agree on the first coordinate, we can rewrite $F$ as $\widehat{F} : V^2 \times X\to [0,\infty]$ in the following way 
\[
\^{F}(u,v;x) := F\left([(x,u)]_\Gamma,[(x,v)]_\Gamma\right).
\]
Notice that the map $(u,v) \mapsto \int_X \widehat{F}(u,v;x)d\mu(x)$ is invariant under the diagonal action of $\Gamma$, so we may apply the original mass transport principle (\cref{thm:MTP}) to it.
Thus,
\begin{align*}
\int_{X_o} \sum_{\~{y} \in [\~x]_{\@E_V}} F(\~x,\~{y}) d\mu_o(\~x) 
&=
\int_X \sum_{v \in V} F([x,o]_\Gamma,[x,v]_\Gamma) d\mu(x)
=
\sum_{v \in V} \int_X \widehat{F}(o,v;x) d\mu(x)
\\
\textnormal{[by the MTP]}\quad
&=
\sum_{v \in V} \int_X \widehat{F}(v,o;x) \w_\Gamma^o(v)d\mu(x)
\\
&=
\int_X \sum_{v \in V} F([x,v]_\Gamma,[x,o]_\Gamma) \w_{\mu_o}^{[x,o]_\Gamma}([x,v]_\Gamma)d\mu(x)
\\
&=
\int_{X_o} \sum_{\~{y}\in [\~x]_{\~E}} F(\~{y},\~x) \w_{\mu_o}^{\~x}(\~{y})d\mu_o(\~x).
\qedhere
\end{align*}
\end{proof}

We now define a natural
%\unskip\footnote{Precisely, this is a natural isomorphism between two (contravariant) functors from the category of free pmp actions $\Gamma \acts (X,\mu)$ to the category of sets, taking $(X,\mu)$ to the sets of respective data.}
bijection and its inverse between Borel structures on $\@E_V$ and $\Gamma$-invariant random structures on $V$ which are $\Gamma$-equivariant factors of $\Gamma \acts (X,\mu)$.
Here, we think of $2^{V \times V}$ as the space of graphs on $V$.

\begin{proposition}\label{graphing-and-rg} For $\Gamma$, $V$, $(X,\mu)$ as above, we have:
\begin{enumerate}[(a), leftmargin=*]
    \item\label{graphing-to-rg} For every Borel graph $\@G \subseteq \@E_V$ there is a unique $\Gamma$-equivariant Borel map $\omega_\@G : X \to 2^{V \times V}$ such that for all $x \in X$, the map $\phi_x$, as in \cref{eq:phi_bijection}, is an isomorphism of relatively weighted graphs $(\@G |_{[\pi_o(x)]_{\@E_V}}, \w_{\mu_o})$ and $(\omega_\@G(x), \w_\Gamma)$.
    In particular, $(\omega_\@G)_* \mu$ is the law of the $\Gamma$-invariant random graph $\omega_\@G$.
    
    \item\label{rg-to-graphing} For every $\Gamma$-equivariant Borel map $\omega : X \to 2^{V \times V}$, there is a unique Borel graph $\@G_\omega \subseteq \@E_V$ such that $\omega_{\@G_\omega} = \omega$.
\end{enumerate}
More generally, these statements hold for $L$-structures for any countable relational language $L$; precisely, in part \cref{graphing-to-rg} $\@G$ can be any Borel $L$-structure on $\@E_V$ in the sense of \cite{Chen-Kechris}, and in part $\cref{rg-to-graphing}$, $\omega$ can be any $\Gamma$-invariant random $L$-structure which is a factor of $\Gamma \acts (X,\mu)$.
\begin{figure}[htb]
\begin{center}
	\begin{tikzpicture}[thick, scale=0.7]
	\node at (-3,1) {$(X,\mu)$};
    \node at (3.25,1) {$(X_o,\mu_o)$};
    \node at (4.7,1) {$\supseteq$};
    \node at (7.8,1) {$([\pi_o(x)]_{\@E_V}, \@G \vert_{[\pi_o(x)]_{\@E_V}})$};
    \node at (0,-2.1) {$\left(2^{V\times V},\mathbf{P}\right)$};
    \node at (1.8,-2.1) {$\ni$};
    \node at (3.5,-2.1) {$(V, \omega_\@G(x))$};

    \draw [->>] 
        (-2,1) 
            to node [above,sloped] {$\pi_o$} 
        (2,1);
    \draw[->>] 
        (-3,0.5)
            to
            node[below,sloped] {$\omega_\@G$}
            node[above,sloped] {$\Gamma$-\scriptsize{eqvar}.}
        (-1,-1.5);
    \draw[->] 
        (6.3,0.6) 
            to 
            node[below,sloped] {$\varphi_x$}
            node[above,sloped] {$\cong$}
        (3.0,-1.6768);
    \end{tikzpicture}	
\end{center}
\caption{The maps between the free pmp action $\Gamma \acts (X,\mu)$, its quotient $(X_o,\mu_o)$ by the stabilizer $\Gamma_o$ on which the Borel graph $\@G$ is defined, and the space of graphs on $V$ on which $\Gamma$ acts by shift.
}
\end{figure}
\end{proposition}
\begin{proof}
For concreteness, we only prove for a Borel graph $\@G \subseteq \@E_V$ since the general case of Borel $L$-structurings is analogous.

For part \cref{graphing-to-rg}, define the map $\omega_\@G : X \to 2^{V \times V}$ by mapping $x$ to the $\phi_x$-image of $\@G |_{[\pi_o(x)]_{\@E_V}}$; in words, $(u,v)$ is present in $\omega_\@G(x)$ exactly when $[(x,u)]_\Gamma$ and $[(x,v)]_\Gamma$ are adjacent in $\@G$.
The $\Gamma$-equivariance of the map $x \mapsto \phi_x$ as in \cref{eq:phi_is_equivariant} implies that $\omega_\@G$ is $\Gamma$-equivariant; indeed, if the edge $(u,v)$ is present in $\omega_\@G(x)$, then $(\gamma u, \gamma v)$ is present in $\omega_\@G(\gamma x)$.
Thus, $\omega_\@G$ is a factor map and $(\omega_{\@G})_* \mu$ is the law of a $\Gamma$-invariant random graph on vertex set $V$. 
The rest of the statement follows from the fact that $\phi_x$ is a bijection for each $x \in X$, together with \cref{Radon-Nikodym_of_E_V}.

For part \cref{rg-to-graphing}, define the graph $\@G \subseteq \@E_V$ by declaring $[(x,u)]_\Gamma$ and $[(x,v)]_\Gamma$ adjacent in $\@G$ exactly when the vertices  $\phi_x([(x,u)]_\Gamma)$ and  $\phi_x([(x,v)]_\Gamma)$ form an edge in $\omega(x)$, i.e.\ $(u,v) \in \omega(x)$.
The $\Gamma$-equivariance of $\omega$ and $\Gamma$-equivariance of the map $x \mapsto \phi_x$ as in \cref{eq:phi_is_equivariant} implies that this is well-defined, i.e., does not depend on the choice of the representatives $(x,u)$ and $(x,v)$.
\end{proof}

\begin{definition}
Let $\mathbf{P}$ be the law of a $\Gamma$-invariant random graph on $V$.
A Borel graph $\@G$ on a standard probability space $(X_o,\mu_o)$ is called a \defn{cluster graphing of $\mathbf{P}$} if $X_o = (X \times V) / \Gamma$ and $\mu_o = (\pi_o)_* \mu$ for some free pmp action $\Gamma \acts (X,\mu)$ such that $(\omega_\@G)_* \mu = \bfP$, where $\pi_o : X \to (X \times V) / \Gamma$ is the quotient map as above and $\omega_\@G$ is as in \cref{graphing-and-rg}.
\end{definition}

\begin{corollary}\label{existence_cluster_graphing}
Every $\Gamma$-invariant random graph on $V$ with law $\mathbf{P}$ admits a cluster graphing.
\end{corollary}
\begin{proof}
This follows from \cref{graphing-and-rg}\labelcref{rg-to-graphing} and the fact that $\mathbf{P}$ is always a $\Gamma$-equivariant factor of some free pmp action $\Gamma \acts (X,\mu)$. 
For example, take as $(X,\mu)$ the space $\mathrm{PPP}(\Gamma) \times (2^{V \times V},\mathbf{P})$ with the diagonal action (technically, instead of $\mathrm{PPP}(\Gamma)$, one should take a $\Gamma$-invariant conull Borel subset of the free part), then the projection on the second coordinate is a desired factor map.
\end{proof}

For a quasi-pmp locally countable graph $\@G$ on a standard probability space $(X,\mu)$, let $X_\infty(\@G) \subseteq X$ denote the union of $\w$-infinite $\@G$-components, where $\w$ is the associated Radon--Nikodym.
Note that $X_\infty(\@G)$ depends on the choice of $\w$ and hence is defined up to a null set.
We call the restriction $\@G |_{X_\infty(\@G)}$ the \textbf{heavy part} of $\@G$.
We now adapt the definition of indistinguishability of infinite clusters from \cite{LyonsSchramm99,Penfei18}. We say that a $\Gamma$-invariant random graph $\omega$ on $V$ has \textbf{indistinguishable heavy clusters} if for every measurable $A\subset 2^{V} \times 2^{V\times V}$ that is invariant under the diagonal action of $\Gamma$ on $V$, $\bfP$-a.s., for all $\w_\Gamma$ heavy clusters $C$ in $\omega$, we have $(C,\omega)\in A$, or for all $\w_\Gamma$ heavy clusters $C$ in $\omega$, we have $(C,\omega)\notin A$.

\begin{lemma}\label{erg_gives_indist}
Let $\mathbf{P}$ be the law of a $\Gamma$-invariant ergodic random graph on $V$.
If $\mathbf{P}$ admits a cluster graphing whose heavy part is ergodic, then $\mathbf{P}$ has indistinguishable $\w_\Gamma$-heavy clusters.
\end{lemma}
\begin{proof}
Let $\@G$ be a cluster graphing of $\bfP$ on the quotient space $(X \times V) / \Gamma$, for some free pmp action $\Gamma \acts (X,\mu)$ such that $(\omega_\@G)_* \mu = \bfP$, and suppose that the heavy part $\@G_\infty := \@G |_{X_\infty(\@G)}$ is ergodic with respect to $(\pi_o)_* \mu$.

For a configuration $\omega \in 2^{V \times V}$ and a vertex $v \in V$, we let $C_v(\omega)$ denote the cluster at $v$ in $\omega$.
Since $\bfP$ is $\Gamma$-invariant, to prove the conclusion it suffices to fix a $\Gamma$-invariant Borel set $A \subseteq 2^V \times 2^{V \times V}$ and show that the set 
\[
A_o := \left\{ \omega \in 2^{V \times V} : \w_\Gamma(C_o(\omega)) = \infty \text{ and } (C_o(\omega), \omega) \in A \right\}
\]
is $\bfP$-null or $\bfP$-conull.
Note that for each $\gamma \in \Gamma$,
\begin{equation}\label{eq:cluster-invariance of A_o}
\text{if $\omega \in A_o$, and $o$ and $\gamma^{-1} o$ form an edge in $\omega$, then $\gamma \omega \in A_o$}
\end{equation}
because
$
(C_o(\gamma \omega), \gamma \omega) 
=
(\gamma C_{\gamma^{-1} o} (\omega), \gamma \omega)
= 
(\gamma C_o (\omega), \gamma \omega)
=
\gamma (C_o(\omega), \omega) \in A
$
by the $\Gamma$-invariance of $A$.

To prove that $\omega_\@G^{-1}(A_o)$ is $\mu$-null or $\mu$-conull, it suffices to show that $\~A_o := \pi_o(\omega_\@G^{-1}(A_o))$ is $(\pi_o)_* \mu$-null or $(\pi_o)_* \mu$-conull.
It follows from the definitions of $A_o$ and $\omega_\@G$ that the set $\~A_o \subseteq X_\infty(\@G)$.
Thus, by the ergodicity of $\@G_\infty$ and the fact that $\~A_o$ is universally measurable (being an analytic set, see \cite[Theorem 21.10]{Kcdst}), it remains to show that $\~A_o$ is $\@G_\infty$-invariant.

For this, it suffices to take two $\@G$-adjacent points $\pi_o(x)$ and $\pi_o(\gamma x)$ from $(X \times V) / \Gamma$ with $x \in \omega_\@G^{-1}(A_o)$ and $\gamma \in \Gamma$, and show that $\gamma x \in \omega_\@G^{-1}(A_o)$ as well.
Now $\pi_o(x) = [(x,o)]_\Gamma$ and $\pi_o(\gamma x) = [(x, \gamma^{-1} o)]_\Gamma$, and hence $o$ and $\gamma^{-1} o$ form an edge in $\omega_\@G(x)$ by \cref{graphing-and-rg}\labelcref{graphing-to-rg}.
Thus the $\Gamma$-equivariance of $\omega_\@G$ and \labelcref{eq:cluster-invariance of A_o} yield $\omega_\@G(\gamma x) = \gamma \omega_\@G(x) \in A_o$, so $\gamma x \in \omega_\@G^{-1}(A_o)$.
\end{proof}

\subsubsection{Applications to cluster graphings}\label{subsubsec:applications_cluster_graphings}

We now give corollaries of \cref{intro:witnessing_nonamenability} in the context of percolation theory via cluster graphings.

\begin{corollary}\label{cluster-graphing_nonamenable}
Let $G$ be a locally finite connected graph, $\Gamma$ be a transitive closed subgroup of $\Aut(G)$, and $\w_\Gamma$ be the $\Gamma$-invariant relative weight function on $V$ induced by $\Gamma$ as in \eqref{def:haarweights}. 
Let $\mathbf{P}$ be a $\Gamma$-invariant percolation such that a.s.\ there is a cluster with $\ge 3$ $\w_\Gamma$-nonvanishing ends.
Then every cluster graphing $\@G$ of $\bfP$ on a standard probability space $(X_o,\mu_o)$ is $\mu_o$-nonamenable.
In fact, $\@G$ contains a relatively ergodic $\mu_o$-nonamenable Borel subforest $\@F$ with the following property: 

For a.e.\ $\@F$-component $T$, if $\@F$-component $T$ is in a $\@G$-component with $\ge 3$ $\w_{\mu_o}$-nonvanishing ends, then $T$ also has $\ge 3$ $\w_{\mu_o}$-nonvanishing ends.
\end{corollary}
\begin{proof}

Using the notation introduced above, let $\@G$ be a cluster graphing on a standard probability space $(X_o,\mu_o)$  induced by percolation process $\mathbf{P}$, and let $\w_{\mu_o}$ denote the Radon--Nikodym cocycle of $\#E_{\@G}$ with respect to $\mu$.

Invoking the isomorphisms from \cref{graphing-and-rg}, the union of $\@G$-components with $\ge 3$ $\w_{\mu_o}$-nonvanishing ends is a positive measure set $A$.
%Since for each $x \in X$, the restriction of the map $\phi_x$ to the $\@G^{\mathrm{cl}}$-component $C_x$ of $\pi(x)$ is an isomorphism of $(\@G^{\mathrm{cl}} |_{C_x},\w|_{C_x})$ and the cluster of the root in $\omega(x)$ equipped with $\w_\Gamma$ as (relatively) weighted graphs, the union of $\@G^{\mathrm{cl}}$-components with $\ge 3$ $\w$-nonvanishing ends is a positive measure set $A$.
By \cref{intro:witnessing_nonamenability}, $\@G$ restricted to $A$ contains a Borel subforest $\@F$ with a.e.\ $\@F$-component having at least three nonvanishing ends.
In particular, by \cref{Anush-Robin}, $\@F$, and hence $\@G |_A$, is $\mu_o$-nowhere amenable on $A$.
\end{proof}

We now transfer the subforest $\@F$ given by \cref{cluster-graphing_nonamenable} back to the percolation setting. 

This results in a random subforest of $G$ which is closely related to $\RandomMF_{\w_\Gamma}(G)$ and has indistinguishable heavy components, see \cref{thm:indist_forest}.
The key difference between these forests comes from the fact that the construction of $\@F$ given by \cref{intro:witnessing_nonamenability} (see the proof of \cref{thm:mf-qpmp}) prohibits the cycle-cutting procedure from cutting certain edges to make $\@F$ contain a subgraph that is relatively ergodic to $\@G$, while the cycle-cutting algorithm for $\RandomMF_{\w_\Gamma}(G)$ may cut those edges.
Thus, we see it as a motivation towards \cref{q:indist}.

\begin{corollary}[Indistinguishable ``cousin" of $\RandomMF_{\w_\Gamma}(G)$]\label{thm:indist_forest}
Let $G$ be a locally finite connected graph, $\Gamma$ be a transitive closed subgroup of $\Aut(G)$, and $\w_\Gamma$ be the $\Gamma$-invariant relative weight function on $V(G)$ induced by $\Gamma$ as in \eqref{def:haarweights}. 
Assume that $G$ has $\ge 3$ $\w_\Gamma$-nonvanishing ends.
Then $G$ admits a $\Gamma$-invariant random subforest, whose components are indistinguishable and have nonempty perfect spaces of $\w_\Gamma$-nonvanishing ends.
\end{corollary}
\begin{proof}
Let $\@G$ be a cluster graphing on a standard probability space $(X_o,\mu_o)$ of the (trivial) percolation process $\delta_{G}$, and let $\w_{\mu_o}$ denote the Radon--Nikodym cocycle of $\#E_{\@G}$ with respect to $\mu_o$. 
By \cref{graphing-and-rg}, we have that a.e.~$\@G$-component has $\ge3$ $\w_{\mu_o}$-nonvanishing ends.
Let $\@F$ be a Borel subforest of $\@G$ given by \cref{cluster-graphing_nonamenable}.

Note that $\@F$ is relatively ergodic to $\@G$ and a.e.~$\@F$-component has $\ge 3$ $\w_{\mu_o}$-nonvanishing ends. 
It then follows from \cref{erg_gives_indist} that the random spanning subforest of $G$ given by $\omega_{\@F}$, as in \cref{graphing-and-rg}, has indistinguishable components, which a.s.\ have $\ge 3$ $\w_\Gamma$-nonvanishing ends.
\end{proof}

Next we will give a concrete example of the situation in the hypothesis of \cref{cluster-graphing_nonamenable}. First, we recall another transition parameter from percolation theory defined by follows:
\begin{align}
p_h:=p_h(G,\Gamma)&:=\inf\{p\in[0,1]: \mathbb{P}_p(\text{there is a $\w_\Gamma$-heavy cluster})=1\}\label{eq:ph}.
\end{align}
Recalling $p_c(G)$ and $p_u(G)$ from \cref{eq:pc,eq:pu}, we note that \cite[Theorem 4.1.6]{Haggstrom99} yields that infinite light clusters cannot coexist with heavy ones. 
Furthermore, mass transport implies that the unique infinite cluster has to be heavy, so:
\[
p_c(G)\leq p_h(G)\leq p_u(G)\leq 1.
\]
Moreover, each of these inequalities could be strict, see \cite{Haggstrom99,Timar06nonu,Hutchcroft20} for details and examples.

\begin{example}[Free product of $\textrm{GP}(2)$ and $\#Z^2$]\label{ex:GP}
Let $\textrm{GP}(k)$ be the grandparent graph, originally introduced in \cite{Trofimov}. 
Such a graph is constructed as follows: start with a $(k + 1)$-regular tree with a distinguished end, so each vertex has a unique parent and $k$ children. 
Connect every vertex to its grandparent. 
It is easy to check that $\textrm{GP}(k)$ is nonunimodular. 
Moreover, equation \eqref{def:haarweights} implies that if $x$ is a parent of $y$ then $\w_\Gamma^x(y)=\frac{1}{k}$, where $\Gamma := \Aut(\textrm{GP}(k))$. 
Therefore, vertices $x$ and $y$ have the same weight if and only if they are in the same generation.

Let the graph $G$ be the free product%
\footnote{The free product of graphs is similar to the Cayley graph of the free product of groups.
There are several definitions of such products for graphs present in literature; we use the one in, for example, \cite[Section 4]{Pisanski02}.
This definition and various others are compared in \cite{Carter20}, where it is shown that they are all equivalent for vertex-transitive graphs.}
of the grandparent graph $\textrm{GP}(2)$ and $\#Z^2$, and consider Bernoulli$(p)$ percolation $\#P_p$ on it. 
Notice that $G$ is still nonunimodular and the induced cocycle is equal to $1$ on each edge that comes from $\#Z^2$, while it takes values $\{\frac{1}{4},\frac{1}{2},2,4\}$ on the directed edges that come from $\textrm{GP}(2)$. 
A classical result of Kesten \cite{Kesten80} states that $p_c(\#Z^2)=\frac12$. 
Thus, for every $p > \frac{1}{2}$, since the vertices in the same copy of $\#Z^2$ are of the same weight, $\#P_p$-a.e.\ percolation configuration on $G$ will contain a heavy cluster.
Therefore, $p_h(G) \leq \frac{1}{2}$. 
On the other hand, by a similar argument as for a $3$-regular tree, $p_u(\textrm{GP}(2))=1$, and hence $p_u(G) = 1$. Finally, \cref{cluster-graphing_nonamenable} yields that for every $p \in \left(\frac{1}{2}, 1\right)$ a cluster graphing  $\@G^\cl$ equipped with the measure induced by $\mathbb{P}_p(G)$ is nonamenable, and in fact, contains a nonamenable Borel subforest.
\end{example}

Witnessing nonamenability for a cluster graphing can be used to show purely graph-theoretic or percolation-theoretic properties. 
An example of such a property is $\w$-visibility of a graph introduced in \cite{Ts:hyperfinite_ergodic_subgraph}.

\begin{definition}\label{def:visibility}
Let $G := (V,E)$ be a connected graph. 
Given a relative weight function (cocycle) $\w : V^2 \to \#R^+$, we say that $y \in V$ is \textbf{$\w$-visible} from $x \in V$ if there is a $G$-path $x = x_0, x_1, x_2, \ldots, x_{k} = y$ such that $\w^x(x_i) \le 1$ for all $i \le k$.
Let $N^\w(x)$ be the set of all $y \in V$ that are $\w$-visible from $x$. 
We say that $G$ has \textbf{finite $\w$-visibility} if for all $x \in V$ the set $N^\w(x)$ is $\w$-light; otherwise, we say that $G$ has \textbf{infinite $\w$-visibility}.
\end{definition}

In \cite[Theorem 1.8]{Ts:hyperfinite_ergodic_subgraph}, it is proven that if almost every component of a quasi-pmp Borel graph $G$ has finite visibility, then $G$ is amenable.
Thus, almost every component of a nowhere amenable quasi-pmp Borel graph has infinite visibility, which enables combinatorial techniques such as mass transport.

\begin{comment}
\begin{theorem}\label{thm:visibility}
Let $G := (V,E)$ be a countable locally finite graph and let $\Gamma$ be a closed subgroup of $\Aut(G)$ that acts transitively on $G$. 
Let $\w_\Gamma$ be the relative weight function induced by $\Gamma$ as in \eqref{def:haarweights}. 
Let $p \in[0,1]$ and let $\mathbb{P}_p$ denote the Bernoulli$(p)$ percolation on $2^E$. 
If $\mathbb{P}_p$-a.s.\ there are infinitely many $\w_\Gamma$-heavy clusters under $\mathbb{P}_p$, then $\mathbb{P}_p$-a.e.\ configuration has infinite $\w_\Gamma$-visibility.
\end{theorem}
\end{comment}

\begin{theorem}\label{thm:visibility}
Let $G := (V,E)$ be a countable locally finite graph and let $\Gamma$ be a closed subgroup of $\Aut(G)$ that acts transitively on $G$. 
Let $\w_\Gamma$ be the relative weight function induced by $\Gamma$ as in \eqref{def:haarweights}. 
Let $p\in(p_h(G,\Gamma), p_u(G))$ and let $\#P_p$ denote the Bernoulli$(p)$ percolation $G$.
Then:
\begin{enumerate}[label=(\alph*)]
\item\label{thm:visibility_cluster} Every $\w_\Gamma$-heavy cluster has infinite $\w_\Gamma$-visibility $\mathbb{P}_p$-a.s.

\item\label{thm:visibility_forest}
For $\mathbb{P}_p$-a.e.\ configuration $\omega$ for every $\w_\Gamma$-heavy cluster $C \subseteq\omega$, the random forest $\RandomMF_{\w_\Gamma}(\omega)$ a.s.\ has a tree in $C$ with infinite $\w_\Gamma$-visibility.
\end{enumerate}
\end{theorem}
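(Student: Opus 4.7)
The plan is to reduce both parts to applications of \cite[Theorem~1.8]{Ts:hyperfinite_ergodic_subgraph} via the cluster graphing construction of \cite[Section~2.2--2.3]{Gaboriau05}, leveraging the results we have already developed in \cref{sec:MFperc,subsec:cluster_graphings}. Throughout, the key observation is that since the Radon--Nikodym cocycle of the cluster graphing corresponds to $\w_\Gamma$, and since $\w_\Gamma$-visibility (\cref{def:visibility}) is defined via a cocycle-homogeneous condition $\w^x(x_i) \le 1$, infinite $\w_\Gamma$-visibility of a cluster is equivalent to infinite visibility of the corresponding component of the cluster graphing.

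For part \cref{thm:visibility_cluster}, I would first repeat the argument in the proof of \cref{thm:CorBernPerc} -- using insertion tolerance of Bernoulli$(p)$ percolation together with indistinguishability of heavy clusters \cite{Penfei18} -- to conclude that $\mathbb{P}_p$-a.s.\ \emph{every} $\w_\Gamma$-heavy cluster of $\omega$ has at least three $\w_\Gamma$-nonvanishing ends. Then I would form the cluster graphing $G^{\cl}$ with the measure $\nu$ induced by $\mathbb{P}_p$ and restrict to the Borel $\#E_{G^{\cl}}$-invariant set of components coming from heavy clusters; by the correspondence recalled in \cref{subsec:cluster_graphings}, this restriction is a quasi-pmp Borel graph whose Radon--Nikodym cocycle is $\w_\Gamma$ and whose every component has $\ge 3$ $\w_\Gamma$-nonvanishing ends. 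By \cref{3-nonvanishing=>nonamenability}, it is nowhere amenable, so the contrapositive of \cite[Theorem~1.8]{Ts:hyperfinite_ergodic_subgraph} forces a.e.\ component to have infinite $\w_\Gamma$-visibility, which transfers back to the desired statement about heavy clusters.

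For part \cref{thm:visibility_forest}, I would apply \cref{thm:CorBernPerc} to obtain that $\mathbb{P}_p$-a.s.\ $\RandomMF_{\w_\Gamma}(\omega)$ contains infinitely many trees with perfect spaces of $\w_\Gamma$-nonvanishing ends; in particular each such tree has at least three nonvanishing ends. Since $\RandomMF_{\w_\Gamma}(\omega)$ is a $\Gamma$-equivariant factor of $(\omega, (U_e)_{e \in E})$ by \cref{factor_properties}, it admits its own cluster graphing, which is an acyclic Borel subgraphing of $G^{\cl}$ whose components are (copies of) the trees of $\RandomMF_{\w_\Gamma}(\omega)$ and whose Radon--Nikodym cocycle is again $\w_\Gamma$. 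Restricting to the positive-measure invariant set of components coming from these ``nonvanishing-end-perfect'' trees and invoking \cref{Anush-Robin} gives a nowhere amenable acyclic quasi-pmp Borel graph, so \cite[Theorem~1.8]{Ts:hyperfinite_ergodic_subgraph} once more yields infinite $\w_\Gamma$-visibility a.e., which unpacks to infinitely many trees of $\RandomMF_{\w_\Gamma}(\omega)$ having infinite $\w_\Gamma$-visibility almost surely.

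The main obstacle will not be any deep new combinatorics but rather bookkeeping between the percolation and measured-graph viewpoints: namely, verifying that restricting the cluster graphing to the set of ``heavy'' (respectively, ``perfect-nonvanishing-end'') components produces a well-defined quasi-pmp Borel graphing whose Radon--Nikodym cocycle is indeed $\w_\Gamma$, and that infinite $\w_\Gamma$-visibility transfers in both directions across this correspondence. Both points are essentially already recorded (in \cref{subsec:cluster_graphings} for the cocycle, and in the cocycle-homogeneity of \cref{def:visibility} for visibility), so the actual write-up should be quite short.
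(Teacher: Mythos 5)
Your proposal is correct and follows essentially the same route as the paper: in both cases one passes to the cluster graphing (for part~(a) of the ambient percolation, for part~(b) of the factor percolation given by the forest), observes via indistinguishability and insertion/deletion tolerance that every heavy component has $\ge 3$ $\w_\Gamma$-nonvanishing ends, concludes nowhere amenability (the paper cites the proof of \cref{cluster-graphing_nonamenable}, you invoke \cref{3-nonvanishing=>nonamenability} resp.\ \cref{Anush-Robin} directly — both valid), and closes with \cite[Theorem~1.8]{Ts:hyperfinite_ergodic_subgraph}. Your restriction to heavy clusters in part~(a), rather than to all infinite components as the paper writes, is actually the more careful phrasing, since light infinite clusters have no nonvanishing ends and could a priori spoil nowhere amenability.
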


\begin{proof}
By \cref{thm:MFperc:heavy,thm:MFperc:bern} we have that for $\#P_p$-a.e.\ configuration $\omega$:
\begin{enumerate}[(i)]
\item there are infinitely many heavy clusters in $\omega$;

\item each heavy cluster contains $\ge 3$ nonvanishing ends;

\item \label{tree_with_nonvanishing_ends} for every $\w_\Gamma$-heavy cluster $C$ the random forest $\RandomMF_{\w_\Gamma}(\omega)$ almost surely contains a tree in $C$ with $\ge 3$ $\w_\Gamma$-nonvanishing ends.
\end{enumerate}

Part \cref{thm:visibility_cluster}. By the proof of \cref{cluster-graphing_nonamenable}, the restriction of a cluster graphing $\@G$ to the union $A$ of infinite components is $\mu_o$-nowhere amenable on $A$. 
Hence, by \cite[Theorem 1.8]{Ts:hyperfinite_ergodic_subgraph} almost every infinite $\@G$-component in $A$ has infinite $\w_{\mu_o}$-visibility, where $\w_{\mu_o}$ is the Radon--Nikodym cocycle of $\#E_{\@G}$ with respect to the underlying measure $\mu_o$. 
Invoking the isomorphisms from \cref{graphing-and-rg} yields that for $\mathbb{P}_p$-a.e.\ configuration $\omega$, all of its heavy clusters have infinite $\w_\Gamma$-visibility.

Part \cref{thm:visibility_forest}. By \labelcref{tree_with_nonvanishing_ends} above, it is enough to show that a.s.\ every tree in $\RandomMF_{\w_\Gamma}(\omega)$ with $\ge 3$ $\w_\Gamma$-nonvanishing ends has infinite $\w_\Gamma$-visibility.
But this follows from part \labelcref{thm:visibility_cluster} applied to $\RandomMF_{\w_\Gamma}(\omega)$ instead of $\omega$.
\end{proof}

%%%%%%%%%%%%%%%%%%%%%%%%
% The Back matters 
%%%%%%%%%%%%%%%%%%%%%%%%
\bibliographystyle{alphaurl}
\bibliography{ref-qpmp.bib}% no .bib

\end{document}